\newlength{\auxlength}
\newlength{\auxlengthtwo}
\newlength{\auxlengththree}
\newcommand{\alg}{\mathbf}
\newcommand{\logic}{\mathcal}
\newcommand{\truthvalue}{\mathsf}
\newcommand{\assign}{:=}
\newcommand{\equals}{\approx}
\newcommand{\pair}[2]{\langle #1, #2 \rangle}
\newcommand{\tuple}{\overline}
\newcommand{\clone}{\mathsf}
\newcommand{\clonegen}[1]{\mathrm{Clo}(#1)}
\newcommand{\DMAClone}{\clone{DMA}}
\newcommand{\DLatClone}{\clone{DLat}}
\newcommand{\BiLatClone}{\clone{BiLat}}
\newcommand{\idntof}{\mathop{\idmap_{\Neither\mapsto\False}}}
\newcommand{\idntot}{\mathop{\idmap_{\Neither\mapsto\True}}}
\newcommand{\idbtot}{\mathop{\idmap_{\Both\mapsto\True}}}
\newcommand{\idntob}{\mathop{\idmap_{\Neither\mapsto\Both}}}
\newcommand{\idbton}{\mathop{\idmap_{\Both\mapsto\Neither}}}
\newcommand{\Truetton}{\mathop{\True_{\True\mapsto\Neither}}}
\newcommand{\Truettob}{\mathop{\True_{\True\mapsto\Both}}}
\newcommand{\Truenton}{\mathop{\True_{\Neither\mapsto\Neither}}}
\newcommand{\Truebtob}{\mathop{\True_{\Both\mapsto\Both}}}
\newcommand{\Deltanbpair}{\Delta_{\pair{\Neither}{\Both}}}
\newcommand{\pbp}{\mathrm{pbp}^{2}}
\newcommand{\nh}{\mathrm{nh}^{2}}
\newcommand{\mnh}{\mathrm{mnh}^{2}}
\newcommand{\np}{\mathrm{np}^{2}}
\newcommand{\mnp}{\mathrm{mnp}^{2}}
\newcommand{\mhnp}{\mathrm{mhnp}^{2}}
\newcommand{\mhnpt}{\mathrm{mhnp}^{3}}
\newcommand{\mnpt}{\mathrm{mnp}^{3}}
\newcommand{\npt}{\mathrm{np}^{3}}
\newcommand{\imp}{\rightarrow}
\newcommand{\imptwo}{\supset}
\newcommand{\biimp}{\leftrightarrow}
\mathchardef\mathhyphen="2D
\newcommand{\imptmin}{\leftrightarrow_{\mathrm{t\mathhyphen min}}}
\newcommand{\imptmax}{\rightarrow_{\mathrm{t\mathhyphen max}}}
\newcommand{\impimin}{\leftrightarrow_{\mathrm{i\mathhyphen min}}}
\newcommand{\impimax}{\rightarrow_{\mathrm{i\mathhyphen max}}}
\newcommand{\impG}{\rightarrow_{\mathrm{G}}}
\newcommand{\impcrisp}{\rightarrow_{\True\False}}
\newcommand{\equivcrisp}{\leftrightarrow_{\True\False}}
\newcommand{\Btwo}{\mathrm{B}_{\mathrm{2}}}
\newcommand{\Kthree}{\mathrm{K}_{\mathrm{3}}}
\newcommand{\Pthree}{\mathrm{P}_{\mathrm{3}}}
\newcommand{\DMfour}{\mathrm{DM}_{\mathrm{4}}}
\newcommand{\Btwoalg}{\alg{B}_{\alg{2}}}
\newcommand{\Kthreealg}{\alg{K}_{\alg{3}}}
\newcommand{\Pthreealg}{\alg{P}_{\alg{3}}}
\newcommand{\DMfouralg}{\alg{DM}_{\alg{4}}}
\newcommand{\True}{\mathsf{t}}
\newcommand{\False}{\mathsf{f}}
\newcommand{\Neither}{\mathsf{n}}
\newcommand{\Both}{\mathsf{b}}
\newcommand{\dmneg}{\mathord{-}}
\newcommand{\dual}{\partial}
\newcommand{\infleq}{\sqsubseteq}
\newcommand{\infgeq}{\sqsupseteq}
\newcommand{\ninfleq}{\not\infleq}
\newcommand{\ninfgeq}{\not\infgeq}
\newcommand{\inflneq}{\sqsubset}
\newcommand{\infgneq}{\sqsupset}
\newcommand{\infwedge}{\otimes}
\newcommand{\infvee}{\oplus}
\newcommand{\Leibniz}[2]{\Omega^{#1}(#2)}
\newcommand{\idmap}{\mathrm{id}}
\newcommand{\interdash}{\dashv \vdash}
\newcommand{\theoremstartswithitemize}{~}
\theoremstyle{plain}
\newtheorem{theorem}{Theorem}[section]
\newtheorem{proposition}[theorem]{Proposition}
\newtheorem{lemma}[theorem]{Lemma}
\newtheorem{fact}[theorem]{Fact}
\newtheorem{corollary}[theorem]{Corollary}
\theoremstyle{definition}
\newtheorem{definition}[theorem]{Definition}
\author[A. P\v{r}enosil]{Adam P\v{r}enosil}
\title{De Morgan clones \mbox{and four-valued logics}}
\address{Department of Mathematics\\
Vanderbilt University\\Tennessee 37235\\USA}
\urladdr{https://sites.google.com/site/adamprenosil}
\email{adam.prenosil@vanderbilt.edu}
\subjclass{03G27, 03G10, 	03C05}
\keywords{Clone theory, Four-valued logic, Belnap--Dunn logic, Abstract algebraic logic}
\begin{document}

\begin{abstract}
  We study clones on a four-element set related to the clone $\DMAClone$ of all term functions of the sub\-directly irreducible four-element De~Morgan algebra $\DMfouralg$. We find generating sets for the clones of all functions preserving the subalgebras of $\DMfouralg$, the auto\-morphisms of~$\DMfouralg$, the truth order and the information order on $\DMfouralg$, as well as clones defined by conjunctions of these conditions. We identify the covers of $\DMAClone$ in the lattice of four-valued clones and describe the lattice of clones above $\DMAClone$ which contain the discriminator function. Finally, observing that each clone above $\DMAClone$ defines an expansion of the four-valued Belnap--Dunn logic, we classify these clones by their metalogical properties, specifically by their position within the Leibniz and Frege hierarchies of abstract algebraic logic.
\end{abstract}

\maketitle

\section{Introduction}

  The four truth values $\True$ (True), $\False$ (False), $\Neither$ (Neither True nor False), and $\Both$ (Both True and False) form the underlying set of the algebraic semantics for various non-classical logics which endorse the twin ideas of paraconsistency (some propositions may be both true and false) and paracompleteness (some propositions may be neither true nor false). The simplest and best-known of these logics is the four-valued Belnap--Dunn logic~\cite{dunn76,belnap77a,belnap77b}, also known as the logic of first-degree entailment. Its signature consists of a conjunction, disjunction, negation, and possibly also constants for truth and falsity. Many other logics expand of this logic by additional connectives~\cite{arieli+avron96,font+rius00,bou+rivieccio11,sano+omori14,de+omori15}.

  The aim of this paper is to (at least partly) map the expansions of Belnap--Dunn logic in a systematic way. To this end we adopt the signature-independent perspective of clone theory~\cite{clones14}. That is, we identify logics which differ in terms of their choice of primitive connectives but where the same definable connectives can be expressed. This approach enables us to impose some order on the jungle of all possible expansions of Belnap--Dunn logic.

  For example, consider the unary operator $\Box x$ such that $\Box \True = \True$ and $\Box a = \False$ otherwise, and the binary connective $x \impG y$ such that $a \impG b = \True$ if $a \leq b$ and $a \impG b = b$ otherwise. From a fine-grained point of view these yield two different expansions Belnap--Dunn logic. However, from a coarse-grained point of view these expansions can be identified, since the two operations are interdefinable using the operations of Belnap--Dunn logic.

  Here and throughout the paper, by an \emph{expansion} of Belnap--Dunn logic we mean a logic defined semantically by an expansion of (i.e.\ by adding additional connectives to) the four-element logical matrix described below which defines Belnap--Dunn logic. Let us emphasize that this definition is more restrictive than the one commonly found in the literature, which would merely require that Belnap--Dunn logic be precisely the fragment obtained by restricting the expansion to the appropriate signature.

  The typical meta\-logical properties studied by algebraic logicians -- such as the deduction theorem~\cite{blok+pigozzi01}, the~proof by cases property~\cite{czelakowski01, cintula+noguera13}, proto\-algebraicity~\cite{czelakowski01}, truth-equationality~\cite{raftery06}, and self\-extensionality~\cite{wojcicki79,font16} -- depend only on the definable connectives. A signature-independent approach is therefore entirely appropriate when trying to classify the expansions of Belnap--Dunn logic according to such metalogical properties.

  The algebra of truth-values of Belnap-Dunn logic is the sub\-directly irreducible four-element De~Morgan algebra $\DMfouralg$ over the universe $\DMfour \assign \{ \True, \False, \Neither, \Both \}$. Clones on this set will be called De~Morgan clones, in particular the clone of all term functions of this algebra will be denoted~$\DMAClone$. 

  We~first consider various De~Morgan clones related to the algebra $\DMfouralg$. In the first part of the paper (Section~\ref{sec: structure preserving}), we provide finite generating sets for all De~Morgan clones defined by pre\-serving some combination of the following: the sub\-algebras of~$\DMfouralg$, the auto\-morphisms of~$\DMfouralg$, the truth (lattice) order on~$\DMfouralg$, and the information order on~$\DMfouralg$.

  Here we follow in the footsteps of Arieli and Avron, who already established several results of this form~\cite{arieli+avron98,avron99,arieli+avron17} (see also~\cite[Section~5.3]{avron+arieli+zamansky18}). However, our method of proof and choice of generating sets differ from theirs. Since we intend to provide a self-contained presentation, we include proofs even for results which were already proved in the papers of Arieli and Avron. (This concerns, in our terminology, the clones of persistent functions and functions preserving $\Btwo$, $\Kthree$, $\Pthree$, and both $\Kthree$ and $\Pthree$.)

  Having described the De~Morgan clones which preserve some of the structure of~$\DMfouralg$, we then turn our attention to clones above $\DMAClone$ which \emph{fail} to preserve some of this structure (Section~\ref{sec: above dma}). As a simple example, a clone above $\DMAClone$ fails to preserve $\{ \True, \False \}$ if and only if it contains one of the constants $\Neither$ or $\Both$. As a consequence, we show that $\DMAClone$ has exactly three covers in the lattice of all four-valued clones. To this end it will suffice to describe the minimal clones above $\DMAClone$ which fail to preserve either the information order or the automorphisms of $\DMfouralg$, since $\DMAClone$ is the clone of all functions which preserve the information order and the automorphisms of $\DMfouralg$. These results will also help us describe the lattice of all De~Morgan clones above $\DMAClone$ which contain the discriminator function (Section~\ref{sec: discriminator}).

  Two remarks are in order here. Firstly, there is little hope of obtaining an informative description of the lattice of \emph{all} De~Morgan clones. While the lattice of all clones on a two-element set is countable and has a fairly simple structure first described by Post~\cite{post41}, already the lattice of clones on a three-element set has the cardinality of the continuum and its structure is widely assumed not to permit a tractable description~\cite{kerkhoff+poschel+schneider14}.

  Secondly, there are in fact only finitely many De~Morgan clones lying above~$\DMAClone$. The Baker--Pixley theorem~\cite{baker+pixley75} states that if the ternary majority function is a term function of a finite algebra $\alg{A}$, then the term functions of~$\alg{A}$ are precisely the functions which preserve the subalgebras of $\alg{A} \times \alg{A}$ (viewed as binary relations). In particular, each clone above $\DMAClone$ is the clone of all functions preserving some set of subalgebras of $\DMfouralg \times \DMfouralg$. The lattice of all De~Morgan clones above $\DMAClone$ can therefore in theory be obtained in a completely mechanical manner. In practice, however, such an approach does not seem particularly feasible at this point.

  In the final section of the paper (Section~\ref{sec: classification}), we move from the study of clones in their own right to the study of metalogical properties of the logics they induce. To be more precise, let a set $X$ with a designated subset $F$ be given. Then each clone $\clone{C}$ on $X$ defines a consequence relation between subsets of $\clone{C}$. For $\Gamma(\tuple{x}), \Delta(\tuple{x})\subseteq \clone{C}$, where $\tuple{x}$ denotes any tuple of variables, the consequence relation $\Gamma(\tuple{x}) \vdash \Delta(\tuple{x})$ holds if and only if
\begin{align*}
  \Gamma(\tuple{a}) \subseteq F \implies \Delta(\tuple{a}) \subseteq F \text{ for all tuples } \tuple{a} \in X.
\end{align*}
  This is merely a clone-theoretic formulation of the usual definition of a logic determined by a logical matrix (i.e.\ by an algebra equipped with a subset).

  We may now inquire into the properties of this consequence relation. For example, we call $\clone{C}$ \emph{protoalgebraic} if there is a set of binary functions $\Delta(x, y) \subseteq \clone{C}$ which satisfies Reflexivity and Modus Ponens:
\begin{align*}
  \emptyset & \vdash \Delta(x, x) & & \text{and} & x, \Delta(x, y) \vdash y.
\end{align*}
  Similarly, we call~$\clone{C}$ \emph{selfextensional} if for each pair of functions $f, g \in \clone{C}$ of arity $n$ such that $f(\tuple{x}) \vdash g(\tuple{x})$ and $g(\tuple{x}) \vdash f(\tuple{x})$ we have $f = g$. These are two of the most important classes in the so-called Leibniz and Frege hierarchy of logics~\cite{font16}, respectively. (The definition of selfextensionality in abstract algebraic logic is in fact somewhat more complicated: see Section~\ref{sec: classification}. For the purposes of this introduction, however, this simpler definition will do.)

  Observe that if $\clone{C} \subseteq \clone{D}$ and $\clone{C}$ is protoalgebraic ($\clone{D}$ is selfextensional), then so is $\clone{D}$ (so is $\clone{C}$). In fact, each selfextensional clone is contained in some maximal selfextensional clone. Whether each protoalgebraic clone extends some minimal protoalgebraic clone is less clear in general. However, it will at least be true for the four-valued clones above $\DMAClone$.

  More explicitly, given a set $X$ and a subset $F \subseteq X$ (and a clone~$\clone{C}$), we may now consider the following problems:
\begin{align*}
  \text{describe the minimal $F$-proto\-algebraic clones on $X$ (above or below $\clone{C}$).}
\end{align*}
  Similarly:
\begin{align*}
  \text{describe the maximal $F$-selfextensional clones on $X$ (above or below $\clone{C}$).}
\end{align*}
  To the best of our knowledge, such questions have not been considered in the literature so far. The closest related result appears to be the theorem of Avron \& B\'{e}ziau~\cite{avron+beziau17} that each para\-consistent three-valued logic which enjoys the deduction theorem is non-selfextensional. In~the last part of this paper, we answer these two questions (and other related ones) for De~Morgan clones which extend $\DMAClone$. In particular, in turns out that only two of these clones are both protoalgebraic and selfextensional.

  While the signature-independent perspective has long been adopted in universal algebra, it does not appear to be particularly common in the study of non-classical logics. This paper can be seen as an attempt to go some way towards convincing non-classical logicians of the merits of a clone-theoretic perspective. Universal algebraists need no such convincing, on the other hand, they may not be aware of the existence of the Leibniz and Frege hierarchies, which classify logics -- hence by extension also clones -- according to the behavior of the so-called Leibniz and Frege operators. The Leibniz hierarchy can be seen as an analogue of the Maltsev hierarchy of universal algebra, although we shall not go into the details of this analogy here (see~\cite{jansana+moraschini1}). Let us merely note the parallel between the existence of terms satisfying certain equations (which defines a Maltsev condition) and the existence of terms satisfying certain logical rules (which defines protoalgebraicity).

\section{Preliminaries}
\label{sec: preliminaries}

  The purpose of this section is to settle our terminology and notation. We first review the few clone-theoretic notions which will be needed in this paper, then we restrict our attention to clones on a four-element set $\DMfour$ and define some important functions and relations on this set.

\begin{definition}[Clones]
  A \emph{clone} on a finite set $X$ is a family of functions on $X$ of finite non-zero \mbox{arity} which is closed under composition of functions and contains all projection maps $\pi^{n}_{k}\colon X^{n} \to X$ onto the $k$-th component.
\end{definition}

  Clones on $X$ form a complete lattice ordered by inclusion, where meets are intersections. The~clone generated by a set $F$ of functions of finite non-zero arity on~$X$, i.e.\ the smallest clone on $X$ which contains $F$, will be denoted $\clonegen{F}$. We~use the notation
\begin{align*}
  \clonegen{f_{1}, \dots, f_{n}} & \assign \clonegen{\{ f_{1}, \dots, f_{n} \}}, \\ \clonegen{X, f_{1}, \dots, f_{n}} & \assign \clonegen{X \cup \{ f_{1}, \dots, f_{n} \}}.
\end{align*}
  Note that the above definition of a clone excludes functions of arity zero, i.e.\ constants. These are instead treated as constant functions of arity one.

  Clones can be specified in essentially two different ways. Firstly, we can provide a generating set for the clone. This amounts to identifying it as the clone of all term functions of some algebra. Secondly, each clone on a finite set can be specified as the set of all functions which preserve some set of relations of finite non-zero arity on $X$. We shall not need a precise definition of preserving an arbitrary relation, as we will be content with certain special cases: preserving a subset of $X$, or a unary function, or a partial order. In~the first part of the paper we provide generating sets for some natural clones on a four-element set defined by preserving certain relations.

  Throughout the paper we consider a fixed four-element set
\begin{align*}
  \DMfour & \assign \{ \True, \False, \Neither, \Both \}.
\end{align*}
  These elements are named according to their logical interpretations: True, False, Neither (True nor False), and Both (True and False).

\begin{definition}[Boolean clones and De~Morgan clones]
  Clones on $\DMfour$ will be called \emph{De~Morgan clones}, while clones on $\Btwo$ will be called \emph{Boolean clones}. A \emph{De~Morgan function} is a function (of finite non-zero arity) on $\DMfour$, while a \emph{Boolean function} is a function (of finite non-zero arity) on~$\Btwo$.
\end{definition}
  
  The most important subsets of $\DMfour$ will be denoted
\begin{align*}
  \Btwo & \assign \{ \True, \False \}, & \Kthree & \assign \{ \True, \Neither, \False \}, & \Pthree & \assign \{ \True, \Both, \False \}.
\end{align*}
  This notation stands for Boole, Kleene, Priest, and De~Morgan. The elements of $\Btwo$ will be called \emph{Boolean elements}. The set $\Kthree$ forms the set of truth values of Kleene's strong three-valued logic \cite{kleene38,kleene52}, while $\Pthree$ forms the set of truth values of the so-called Logic of Paradox of Priest~\cite{priest79}. The~whole set $\DMfour$ is the set of truth values of the four-valued Belnap--Dunn logic, also known as the logic of first-degree entailment~\cite{dunn76,belnap77a,belnap77b}. See~\cite{priest08intro} for more information.

  There are two natural partial orders on the set $\DMfour$: the \emph{truth order} $x \leq y$ and the \emph{information order} $x \infleq y$. The smallest element in the truth order is $\False$, the largest element is $\True$, and the elements $\Neither$ and $\Both$ are incomparable. The smallest element in the information order is $\Neither$, the largest element is $\Both$, and the elements $\True$~and~$\False$ are incomparable. That is, the truth order is the usual bottom--up order and the information order is the left--right order in the following picture:
\begin{center}
\begin{tikzpicture}[scale=1,
  dot/.style={circle,fill,inner sep=2.5pt,outer sep=2.5pt}]
  \node (DM4a) at (0,-1) {$\False$};
  \node (DM4b) at (-1,0) {$\Neither$};
  \node (DM4c) at (1,0) {$\Both$};
  \node (DM4d) at (0,1) {$\True$};
  \draw[-] (DM4a) edge (DM4b);
  \draw[-] (DM4a) edge (DM4c);
  \draw[-] (DM4b) edge (DM4d);
  \draw[-] (DM4c) edge (DM4d);
\end{tikzpicture}
\end{center}

  Joins and meets in the truth order are denoted $x \wedge y$ and $x \vee y$, while joins and meets in the information order are denoted $x \infwedge y$ and $x \infvee y$. This yields the \emph{truth lattice} $\langle \DMfour, \wedge, \vee, \True, \False \rangle$ and the \emph{information lattice} $\langle \DMfour, \infwedge, \infvee, \Both, \Neither \rangle$. These two distributive lattices are isomorphic, in fact there are two equally good isomorphisms between them.

  We shall say that an element $a$ of $\DMfour$ is \emph{true} if $a \in \{ \True, \Both \}$ and \emph{false} if $a \in \{ \False, \Both \}$. These sets are the only prime filters on the information lattice, while the sets $\{ \True, \Both \}$ and $\{ \True, \Neither \}$ are the only prime filters on the truth lattice. In~particular, it will be useful to keep in mind that
\begin{align*}
  a \leq b \text{ if and only if } (a \in \{ \True, \Neither \} \implies b \in \{ \True, \Neither \}) \, \& \, (a \in \{ \True, \Both \} \implies b \in \{ \True, \Both \}), \\
  a \infleq b \text{ if and only if } (a \in \{ \True, \Both \} \implies b \in \{ \True, \Both \}) \, \& \, (a \in \{ \False, \Both \} \implies b \in \{ \False, \Both \}).
\end{align*}

  There are also two natural involutions on the set $\DMfour$: the \emph{De~Morgan negation} $\dmneg x$ and the \emph{conflation} $\dual x$. (The notation $\dmneg x$ is meant to suggest reflection across a horizontal axis of symmetry, both in $\DMfouralg$ and in the notation for meets and joins: $\wedge$ transforms into $\vee$. The notation $\dual x$ alludes to the truth value $\dual a$ being dual to $a$ in the sense that it is true if and only if $a$ is non-false, and false if and only if $a$ is non-true. Note that some authors use $\dmneg x$ to denote conflation instead.) De~Morgan negation is uniquely determined by the requirement that $\dmneg a$ is true if and only if $a$ is false, and $\dmneg a$ is false if and only if $a$ is true. Likewise, conflation is uniquely determined by the requirement that $\dual a$ is true if and only if $a$ is non-false, and $\dual a$ is false if and only if $a$ is non-true. In other words,
\begin{align*}
  \dmneg \True & = \False, & \dmneg \Neither & = \Neither, & \dual \True & = \True, & \dual \Neither & = \Both, \\
  \dmneg \False & = \True, & \dmneg \Both & = \Both, & \dual \False & = \False, & \dual \Both & = \Neither.
\end{align*}
  Composing these two involutions yields the Boolean negation.

  These involutions allow us to define the~\emph{De~Morgan dual} of a De~Morgan function $f(\tuple{x})$ as the function $\dmneg f(\dmneg \tuple{x})$ and the \emph{conflation dual} of $f(\tuple{x})$ as the function $\dual f(\dual \tuple{x})$. De~Morgan duality thus transforms meets into joins and vice versa in the truth lattice while commuting with meets and joins in the information lattice. Conflation duality exhibits the opposite behaviour: it transforms meets into joins and vice versa in the information lattice while commuting with meets and joins in the truth lattice.

  There is also what we call the \emph{truth--information symmetry}, which extends the isomorphism between the truth lattice and the information lattice. It~states that the algebras
\begin{align*}
  & \langle \DMfour, \wedge, \vee, \True, \False, \infwedge, \infvee, \Both, \Neither, \dmneg, \dual \rangle & & \text{and} & & \langle \DMfour, \infwedge, \infvee, \Both, \Neither, \vee, \wedge, \True, \False, \dual, \dmneg \rangle\phantom{.}
\end{align*}
  are isomorphic, as are
\begin{align*}
  & \langle \DMfour, \wedge, \vee, \True, \False, \infwedge, \infvee, \Both, \Neither, \dmneg, \dual \rangle & & \text{and} & & \langle \DMfour, \infvee, \infwedge, \Neither, \Both, \wedge, \vee, \True, \False, \dual, \dmneg \rangle.
\end{align*}
  The above symmetries will often be useful to cut our work in half.

  There are also the unary maps $\Box \colon \DMfour \to \Btwo$ and $\Delta \colon \DMfour \to \Btwo$. These are uniquely determined by their codomain and the requirement that $\Box a$ is true if and only if $a$ is exactly true (true and not false), and $\Delta a$ is true if and only if $a$ is true. In other words,
\begin{align*}
  & \Delta a = \True \text{ if } a \in \{ \True, \Both \}, & & \Box a = \True \text{ if } a = \True, \\
  & \Delta a = \False \text{ if } a \in \{ \False, \Neither \}, & & \Box a = \False \text{ if } a < \True.
\end{align*}
  Their De~Morgan duals are the operations $\nabla x$ and $\Diamond x$ such that
\begin{align*}
  & \nabla a = \True \text{ if } a \in \{ \True, \Neither \}, & & \Diamond a = \True \text{ if } a > \False, \\
  & \nabla a = \False \text{ if } a \in \{ \False, \Both \}, & & \Diamond a = \False \text{ if } a = \False.
\end{align*}
  The map $\nabla x$ is the conflation dual of $\Delta x$, while $\Box x$ is its own conflation dual. The operations $\Box x$ and $\Diamond x$ are definable in the following ways:
\begin{align*}
  \Box x & = \Delta x \wedge \nabla x = x \wedge \dual x, & \Diamond x & = \Delta x \vee \nabla x = x \vee \dual x.
\end{align*}

  We will sometimes also need to refer to other De~Morgan functions. In~order to avoid having to introduce and memorize further notation, in such cases we use the convention that $f_{\truthvalue{x}\mapsto\truthvalue{y}}$ denotes the function which behaves like~$f$ except in mapping $\truthvalue{x}$ to $\truthvalue{y}$ rather than to $f(\truthvalue{x})$. For example,
\settowidth{\auxlength}{$\Both$}
\settowidth{\auxlengthtwo}{$\Neither$}
\settowidth{\auxlengththree}{$a$}
\begin{align*}
  & \Truebtob a = \hbox to \auxlength{\hfil$\Both$\hfil} \text{ if } a = \Both, & & \Truenton a = \hbox to \auxlengthtwo{\hfil$\Neither$\hfil} \text{ if } a = \Neither, \\
  & \Truebtob a = \hbox to \auxlength{\hfil$\True$\hfil} \text{ otherwise}, & & \Truenton a = \hbox to \auxlengthtwo{\hfil$\True$\hfil} \text{ otherwise}, \\[5pt]
  & \idbton a = \Neither \text{ if } a = \hbox to \auxlengththree{\hfil$\Both$\hfil}, & & \idntob a = \hbox to \auxlengththree{\hfil$\Both$\hfil} \text{ if } a = \Neither, \\
  & \idntob a = \hbox to \auxlengththree{\hfil$a$\hfil} \text{ otherwise}, & & \idntob a = \hbox to \auxlengththree{\hfil$a$\hfil} \text{ otherwise}.
\end{align*}

\section{\texorpdfstring{De~Morgan clones related to the algebra $\DMfouralg$}{De Morgan clones related to the algebra DM4}}
\label{sec: structure preserving}

  In this section, we consider De~Morgan clones which preserve the following structure naturally associated with the algebra~$\DMfouralg = \langle \DMfour, \wedge, \vee, \True, \False, \dmneg \rangle$:
\begin{itemize}
\item the subalgebras $\Btwoalg$, $\Kthreealg$, $\Pthreealg$,
\item the automorphism $\dual$,
\item the truth order $\leq$, and
\item the information order $\infleq$.
\end{itemize}
  The goal of the present section is to provide (finite) generating sets for all De~Morgan clones defined by conjunctions of these conditions.

  The most important of these clones will be given the following names:
\settowidth{\auxlength}{$\BiLatClone$}
\begin{align*}
  \hbox to \auxlength{\hfil$\DLatClone$\hfil} & \assign \clonegen{\wedge, \vee, \True, \False}, \\
  \hbox to \auxlength{\hfil$\DMAClone$\hfil} & \assign \clonegen{\wedge, \vee, \True, \False, \dmneg}, \\
  \hbox to \auxlength{\hfil$\BiLatClone$\hfil} & \assign \clonegen{\wedge, \vee, \True, \False, \infwedge, \infvee, \Both, \Neither}.
\end{align*}
  These are respectively the clones of term functions of a distributive lattice (given by the truth order), De~Morgan algebra, and bilattice (given by the truth and information orders) over the universe $\{ \True, \False, \Neither, \Both \}$.

  We introduce the following terms for functions which preserve the auto\-morphism $\dual$, the truth order $\leq$, or the information order $\infleq$. Observe that $\dual$ is the only non-trivial auto\-morphism of $\DMfouralg$.

\begin{definition}[Harmonious, positive, and persistent functions]
  A De Morgan function $f(\tuple{x})$ is
\begin{itemize}
\item \emph{harmonious} if $f(\dual \tuple{a}) = \dual f(\tuple{a})$,
\item \emph{positive} if $\tuple{a} \leq \tuple{b}$ implies $f(\tuple{a}) \leq f(\tuple{b})$,
\item \emph{persistent} if $\tuple{a} \infleq \tuple{b}$ implies $f(\tuple{a}) \infleq f(\tuple{b})$.
\end{itemize}
\end{definition}

  The term \emph{harmonious} refers to the fact that for such functions the truth and falsity conditions are in harmony (the falsity conditions coincide with the non-truth conditions), rather like the introduction and the elimination rules in natural deduction calculi are in harmony. The term \emph{persistent} refers to the fact that obtaining more information about atomic propositions yields more information about complex propositions, rather like how the connectives of intuitionistic logic propagate the persistency of atomic propositions to complex propositions in the Kripke semantics for intuitionistic logic.

\begin{definition}[Preserving subalgebras]
  A De~Morgan function $f$ \emph{preserves a set} $X \subseteq \DMfour$ if $\tuple{a} \subseteq X$ implies $f(\tuple{a}) \in X$.
\end{definition}

  The above conditions interact in several ways, thus reducing the number of distinct combinations that we need to consider. In particular, the following observations will be useful throughout this section:
\begin{itemize}
\item Each harmonious function preserves $\Btwo$.
\item Each $\Btwo$-preserving persistent function preserves $\Kthree$ and $\Pthree$.
\item Each function which preserves both $\Kthree$ and $\Pthree$ also preserves $\Btwo$.
\item A harmonious function preserves $\Kthree$ if and only if it preserves $\Pthree$.
\end{itemize}

  It will also be useful to recall that the truth and information orders are related by the following equalities:
\begin{align*}
  x \infwedge y & = ((x \wedge y) \vee \Neither) \wedge ((x \vee y) \vee \Both), & \Both & = \False \infvee \True, \\
  x \infvee y & = ((x \vee y) \wedge \Both) \vee ((x \wedge y) \wedge \Neither), & \Neither & = \False \infwedge \True.
\end{align*}
  It follows that in order to generate $\BiLatClone$ relative to $\DLatClone$ it suffices to add either the constants $\Neither$, $\Both$, or the information join and meet $\infvee$, $\infwedge$. That is,
\begin{align*}
  \BiLatClone & = \clonegen{\DLatClone, \Neither, \Both} = \clonegen{\DLatClone, \infwedge, \infvee}.
\end{align*}

  The first of our theorems now provides generating sets for De~Morgan clones defined by harmonicity plus some conjunction of the above conditions. The only exception is the clone of positive persistent harmonious functions, which will be shown to co\-incide with $\DLatClone$ later.

  The proofs of most of the theorems in this section will follow the same general structure. For example, in order to prove that each harmonious function belongs to $\clonegen{\DLatClone, \dmneg, \dual}$, we
\begin{enumerate}[(i)]
\item identify the smallest harmonious De~Morgan functions $f_{\tuple{a}, \Neither}$, $f_{\tuple{a}, \Both}$, and $f_{\tuple{a}, \True}$ in the truth order such that $f_{\tuple{a}, \Neither}(\tuple{a}) = \Neither$, $f_{\tuple{a}, \Both}(\tuple{a}) = \Both$, and $f_{\tuple{a}, \True}(\tuple{a}) = \True$,
\item express each of these functions in terms of the given generating set, and
\item express $f$ as a suitable join of the functions $f_{\tuple{a}, \Neither}$, $f_{\tuple{a}, \Both}$, and $f_{\tuple{a}, \True}$.
\end{enumerate}
  More explicitly, for each tuple $\tuple{a} \subseteq \DMfour$ we describe the unique harmonious function $f_{\tuple{a}, \True}(\tuple{x})$ such that $f_{\tuple{a}, \True}(\tuple{a}) = \True$ and moreover $f_{\tuple{a}, \True} \leq g(\tuple{x})$ for each harmonious function $g(\tuple{x})$ such that $g(\tuple{a}) = \True$. We also do the same for $f_{\tuple{a}, \Neither}$ and $f_{\tuple{a}, \Both}$. We then express these functions in terms of De~Morgan negation, conflation, and functions in $\DLatClone$. Finally, we observe that
\begin{align*}
  f(\tuple{x}) & = \smashoperator{\bigvee_{f(\tuple{a}) = \Neither}} f_{\tuple{a}, \Neither}(\tuple{x}) ~ \vee ~ \smashoperator{\bigvee_{f(\tuple{a}) = \Both}} f_{\tuple{a}, \Both}(\tuple{x}) ~ \vee ~ \smashoperator{\bigvee_{f(\tuple{a}) = \True}} f_{\tuple{a}, \True}(\tuple{x}).
\end{align*}
  This last step will be the same for all of our proofs, we therefore shall not explicitly spell it out in the proofs below.

\begin{theorem}[Harmonious clones] \label{harmonious clones}
  A De Morgan function lies in
\begin{enumerate}[\rm(i)]
\item $\clonegen{\DLatClone, \dmneg, \dual}$ iff it is harmonious,
\item $\clonegen{\DLatClone, \dual}$ iff it is harmonious and positive,
\item $\clonegen{\DLatClone, \dmneg}$ iff it is harmonious and persistent,
\item $\clonegen{\DLatClone, \dmneg, \Box}$ iff it is harmonious and preserves $\Btwo$, $\Kthree$,~$\Pthree$,
\item $\clonegen{\DLatClone, \Box, \Diamond}$ iff it is harmonious, positive, and preserves $\Btwo$, $\Kthree$, $\Pthree$.
\end{enumerate}
\end{theorem}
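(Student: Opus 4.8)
The plan is to prove each biconditional via two inclusions. For the easy inclusion, that every function in the listed clone has the stated property, I would observe that each of the three properties (harmonious, positive, persistent) and each condition ``preserves $\Btwo$, $\Kthree$, $\Pthree$'' defines a \emph{clone} of De~Morgan functions: each is visibly closed under substitution and contains the projections, so it suffices to check that the nominated generators have the property. For instance $\wedge$, $\vee$, and the constants are harmonious because $\dual$ is a lattice automorphism fixing $\True$ and $\False$, while $\dmneg$ is harmonious because $\dmneg \dual = \dual \dmneg$; and $\wedge$, $\vee$, $\dual$ are positive while $\wedge$, $\vee$, $\dmneg$ are persistent, since $\DMfouralg$ is an interlaced bilattice and $\dmneg$ is an automorphism of the information lattice. (That $\Box, \Diamond \in \clonegen{\DLatClone, \dual}$ is immediate from $\Box x = x \wedge \dual x$ and $\Diamond x = x \vee \dual x$.)

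The substance is the reverse inclusion, carried out by the three-step recipe announced before the theorem. First I would note that each of the relevant classes is closed under the pointwise truth-meet $\wedge$ (the meet preserves every property in play, and meeting two functions of value $v$ at $\tuple{a}$ again has value $v$ there). Consequently, for each tuple $\tuple{a}$ and each target $v \in \{ \Neither, \Both, \True \}$ for which the class contains some function of value $v$ at $\tuple{a}$, there is a \emph{smallest} such function $f_{\tuple{a}, v}$, namely the meet of all of them. The decomposition
\begin{align*}
  f(\tuple{x}) = \smashoperator{\bigvee_{f(\tuple{a}) = \Neither}} f_{\tuple{a}, \Neither}(\tuple{x}) ~ \vee ~ \smashoperator{\bigvee_{f(\tuple{a}) = \Both}} f_{\tuple{a}, \Both}(\tuple{x}) ~ \vee ~ \smashoperator{\bigvee_{f(\tuple{a}) = \True}} f_{\tuple{a}, \True}(\tuple{x})
\end{align*}
then holds for any $f$ in the class: each summand is $\leq f$ because $f_{\tuple{a}, v}$ is the least function of value $v$ at $\tuple{a}$ and $f$ is one such function, while at any point $\tuple{c}$ the summand $f_{\tuple{c}, f(\tuple{c})}$ already attains $f(\tuple{c})$ (and $f(\tuple{c}) = \False$ is the bottom). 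Thus it remains only to express the finitely many functions $f_{\tuple{a}, v}$ in terms of the prescribed generators, and this is the only step that genuinely differs between parts (i)--(v).

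Here I would use the coordinatization $a \mapsto (\nabla a, \Delta a)$ identifying the truth lattice with $\Btwo \times \Btwo$, under which $\dual$ is the coordinate swap. Harmonicity forces $f_{\tuple{a}, \True}$ to take the value $\True$ exactly on the orbit $\{ \tuple{a}, \dual \tuple{a} \}$ and $\False$ elsewhere, while $f_{\tuple{a}, \Neither}$ takes the value $\Neither$ at $\tuple{a}$ and $\Both$ at $\dual \tuple{a}$, with $f_{\tuple{a}, \Both} = \dual f_{\tuple{a}, \Neither}$; in particular these exist only when $\tuple{a}$ is not a Boolean tuple, which is automatic since the class preserves $\Btwo$. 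To realize them in $\clonegen{\DLatClone, \dmneg, \dual}$ for part~(i), I would split the coordinates of $\tuple{a}$ into Boolean and non-Boolean positions. A Boolean position is pinned down by $\Box x_{i} = x_{i} \wedge \dual x_{i}$ (detecting $x_{i} = \True$) or by $\Box \dmneg x_{i}$ (detecting $x_{i} = \False$). For the non-Boolean positions the key gadget is $w_{i} = x_{i} \wedge \dmneg x_{i}$, or its conflate according as $a_{i} = \Neither$ or $a_{i} = \Both$, which equals $\Neither$, $\Both$, or $\False$ according as $x_{i}$ matches $a_{i}$, matches $\dual a_{i}$, or is Boolean. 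Correlating these by the truth-meet, $\bigwedge_{i} w_{i}$ equals $\Neither$ (resp.\ $\Both$) precisely when all non-Boolean coordinates agree with $\tuple{a}$ (resp.\ with $\dual \tuple{a}$) and is $\False$ on any ``mixed'' tuple; conjoining with the Boolean detectors yields $f_{\tuple{a}, \Neither}$ directly, and a final application of $\Diamond x = x \vee \dual x$ produces the Boolean-valued $f_{\tuple{a}, \True}$.

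For parts (ii)--(v) the same skeleton applies, but the minimal functions and the admissible gadgets change. Dropping $\dmneg$ (part~(ii)) forces the monotone versions of the minimal functions, supported on truth-order up-sets of the orbit, which I would assemble from the monotone detectors $\Box$ and $\Diamond$; dropping $\dual$ while keeping $\dmneg$ (part~(iii)) replaces these by the persistent, information-order analogues; and adding the requirement that $\Kthree$ and $\Pthree$ be preserved (parts~(iv),(v)) confines the minimal functions to these subalgebras while admitting $\Box$ and $\Diamond$ among the generators. Wherever possible I would invoke the De~Morgan and conflation dualities and the truth--information symmetry recorded in the preliminaries to transport a finished case to its mirror image and thereby halve the casework. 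I expect the main obstacle to be exactly the construction of the correlating gadget within each restricted generating set -- separating $\tuple{a}$ from $\dual \tuple{a}$ and from the ``mixed'' tuples on the non-Boolean coordinates -- and then verifying that the exhibited expression coincides pointwise with the abstractly defined least function $f_{\tuple{a}, v}$, rather than merely agreeing with it at $\tuple{a}$.
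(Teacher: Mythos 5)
Your skeleton is the same as the paper's announced recipe, and the two general steps you add are sound and are actually left implicit in the paper: each of the five classes is closed under the pointwise truth-meet, so the least function $f_{\tuple{a},v}$ with value $v$ at $\tuple{a}$ exists whenever any such function exists in the class, and your justification of the join decomposition is correct. Your part (i) is also correct and takes a genuinely different route from the paper's: you correlate the non-Boolean coordinates by the unary gadgets $x_{i} \wedge \dmneg x_{i}$ (or their conflates), letting the truth-meet itself kill the mixed tuples because $\Neither \wedge \Both = \False$, build $f_{\tuple{a},\Neither}$ first, and recover $f_{\tuple{a},\True} = \Diamond f_{\tuple{a},\Neither}$; the paper goes the other way, building $f_{\tuple{a},\True}$ from the binary gadgets $\Box(x_{i} \vee x_{j})$ and $\Diamond(x_{i} \wedge x_{j})$ and then meeting with $x_{i}$ or $\dual x_{i}$.

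The genuine gap is that parts (ii)--(v) are never proved: you say that ``the minimal functions and the admissible gadgets change'' and defer exactly the step you yourself flag as the main obstacle, namely constructing the correlating gadgets inside each restricted generating set and verifying that the resulting expressions equal the minimal functions pointwise. That deferred step is where the content of the theorem lies, and your hints do not carry it. For (ii), your part-(i) gadget is unusable ($\dmneg \notin \clonegen{\DLatClone, \dual}$, and $x \wedge \dmneg x$ is not positive), and ``assembling from the monotone detectors $\Box$ and $\Diamond$'' cannot be meant coordinatewise: for $\tuple{a} = \pair{\Neither}{\Neither}$ no meet of unary functions of the separate variables can equal $f_{\tuple{a},\True}$, since taking the value $\True$ at $\pair{\Neither}{\Neither}$ and at $\pair{\Both}{\Both}$ forces every conjunct to be $\True$ at the mixed tuple $\pair{\Neither}{\Both}$, where $f_{\tuple{a},\True}$ must be $\False$. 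One therefore needs either the paper's binary gadgets or an adaptation of your own (i) trick (conjuncts $\Box x_{i}$, $x_{i}$, $\dual x_{i}$, followed by a final $\Diamond$), and you supply and verify neither. For (v), the phrase ``confines the minimal functions to these subalgebras'' misdescribes what happens: the minimal positive harmonious functions preserving $\Btwo$, $\Kthree$, $\Pthree$ are genuinely different from those of (ii), not restrictions of them. For instance, with $\tuple{a} = \pair{\Neither}{\Both}$, the minimal such function with value $\Neither$ at $\tuple{a}$ must take the value $\True$, not $\Neither$, at $\pair{\True}{\Both} \in \Pthree \times \Pthree$, which is exactly why the paper's proof of (v) introduces a new case distinction (``$x_{i} = \Neither$ for some $a_{i} = \Neither$'') and a separate verification. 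As it stands, your proposal proves (i) and outlines a plausible plan for (ii)--(v), but the plan is not a proof.
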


\begin{proof}
  The left-to-right implications are straightforward. In the right-to-left direction, recall that each harmonious function preserves $\Btwo$.

  (i) There is a smallest harmonious function $f_{\tuple{a}, \True}$ with $f_{\tuple{a}, \True}(\tuple{a}) = \True$, namely
\begin{align*}
  f_{\tuple{a}, \True}(\tuple{x}) & = \True \text{ if } \tuple{x} \in \{ \tuple{a}, \dual \tuple{a} \}, \\
  f_{\tuple{a}, \True}(\tuple{x}) & = \False \text{ if } \tuple{x} \notin \{ \tuple{a}, \dual \tuple{a} \}.
\end{align*}
  There are also smallest harmonious functions $f_{\tuple{a}, \Neither}$ and $f_{\tuple{a}, \Both}$ such that $f_{\tuple{a}, \Neither}(\tuple{a}) = \Neither$ and $f_{\tuple{a}, \Both}(\tuple{a}) = \Both$, provided that $\tuple{a} \nsubseteq \{ \True, \False \}$, namely
\begin{align*}
  f_{\tuple{a}, \Neither}(\tuple{a}) & = \Neither, & f_{\tuple{a}, \Neither}(\dual \tuple{a}) & = \Both, & f_{\tuple{a}, \Neither}(\tuple{x}) & = \False \text{ if } \tuple{x} \notin \{ \tuple{a}, \dual \tuple{a} \}, \\
  f_{\tuple{a}, \Both}(\tuple{a}) & = \Both, & f_{\tuple{a}, \Both}(\dual \tuple{a}) & = \Neither, & f_{\tuple{a}, \Both}(\tuple{x}) & = \False \text{ if } \tuple{x} \notin \{ \tuple{a}, \dual \tuple{a} \}.
\end{align*}
  It remains to show that the functions $f_{\tuple{a}, \Neither}$, $f_{\tuple{a}, \Both}$, and $f_{\tuple{a}, \True}$ belong to the clone generated by the given set. (Recall the remarks preceding this theorem.) Clearly $f_{\tuple{a}, \Both}(\tuple{x}) = \dual f_{\tuple{a}, \Neither}(\tuple{x})$, therefore it suffices to express $f_{\tuple{a}, \True}$ and $f_{\tuple{a}, \Neither}$.

  Recall that $\Box x = x \wedge \dual x$ and $\Diamond x = x \vee \dual x$. Let $g(\tuple{x})$ be the conjunction of the following functions: 
\begin{align*}
&  \Box x_{i} \text{ for each } a_{i} \text{ such that } a_{i} = \True, \\
&  \Box \dmneg x_{i} \text{ for each } a_{i} \text{ such that } a_{i} = \False, \\
&  \Diamond x_{i} \wedge \Diamond \dmneg x_{i} \text{ for each } a_{i} \text{ such that } a_{i} \in \{ \Neither, \Both \}, \\
&  \Box (x_{i} \vee x_{j}) \text{ for each } a_{i}, a_{j} \text{ such that } \{ a_{i}, a_{j} \} = \{ \Neither, \Both \}, \\
&  \Diamond (x_{i} \wedge x_{j}) \text{ for each } a_{i}, a_{j} \text{ such that } a_{i} = a_{j} = \Neither \text{ or } a_{i} = a_{j} = \Both.
\end{align*}
  We claim that $g(\tuple{x}) = f_{\tuple{a}, \True}(\tuple{x})$. Clearly $g$ is a harmonious function such that $g(\tuple{a}) = \True$, hence also $g(\dual \tuple{a}) = \True$. If $\tuple{x} \notin \{ \tuple{a}, \dual \tuple{a} \}$, then there are $x_{i}$ and $x_{j}$ such that $x_{i} \neq a_{i}$ and $x_{j} \neq \dual a_{j}$. If $a_{i} \in \{ \True, \False \}$ or $a_{j} \in \{ \True, \False \}$, then $g(\tuple{x}) = \False$ thanks to the conjuncts of the forms $\Box x$ and $\Box \dmneg x$. If $a_{i} \notin \{ \True, \False \}$ and $x_{i} \in \{ \True, \False \}$, then $g(\tuple{x}) = \False$ thanks to the conjunct $\Diamond x_{i} \wedge \Diamond \dmneg x_{i}$. Likewise, $g(\tuple{x}) = \False$ if $a_{j} \notin \{ \True, \False \}$ and $x_{j} \in \{ \True, \False \}$. Finally, suppose that $a_{i}, a_{j}, x_{i}, x_{j} \in \{ \Neither, \Both \}$. Then $x_{i} = \dual a_{i}$ and $x_{j} = a_{j}$, since $x_{i} \neq a_{i}$ and $x_{j} \neq \dual a_{j}$. It follows that $g(\tuple{x}) = \False$ thanks to the conjunct $\Box (x_{i} \vee x_{j})$ if $a_{i} \neq a_{j}$ and thanks to $\Diamond (x_{i} \wedge x_{j})$ if $a_{i} = a_{j}$.

  Moreover, $f_{\tuple{a}, \Neither}(\tuple{x}) = f_{\tuple{a}, \True}(\tuple{x}) \wedge x_{i}$ if $a_{i} = \Neither$ and $f_{\tuple{a}, \Neither}(\tuple{x}) = f_{\tuple{a}, \True}(\tuple{x}) \wedge \dual x_{i}$ if $a_{i} = \Both$. But by the assumption that $\tuple{a} \nsubseteq \{ \True, \False \}$ some such $a_{i}$ exists.

  (ii) There is a smallest positive harmonious function $f_{\tuple{a}, \True}$ such that $f_{\tuple{a}, \True}(\tuple{a}) = \True$, namely
\begin{align*}
  f_{\tuple{a}, \True}(\tuple{x}) & = \True \text{ if } \tuple{x} \geq \tuple{a} \text{ or } \tuple{x} \geq \dual \tuple{a}, & f_{\tuple{a}, \True}(\tuple{x}) & = \False \text{ otherwise}.
\end{align*}
  There are also smallest positive harmonious functions $f_{\tuple{a}, \Neither}$ and $f_{\tuple{a}, \Both}$ such that $f_{\tuple{a}, \Neither}(\tuple{a}) = \Neither$ and $f_{\tuple{a}, \Both}(\tuple{a}) = \Both$, provided that $\tuple{a} \nsubseteq \{ \True, \False \}$, namely
\begin{align*}
  f_{\tuple{a}, \Neither}(\tuple{x}) = \True & \text{ if } \tuple{x} \geq \tuple{a} \text{ and } \tuple{x} \geq \dual \tuple{a}, &  f_{\tuple{a}, \Both}(\tuple{x}) = \True & \text{ if } \tuple{x} \geq \tuple{a} \text{ and } \tuple{x} \geq \dual \tuple{a}, \\  
  f_{\tuple{a}, \Neither}(\tuple{x}) = \Neither & \text{ if } \tuple{x} \geq \tuple{a} \text{ and } \tuple{x} \ngeq \dual \tuple{a}, &  f_{\tuple{a}, \Both}(\tuple{x}) = \Both & \text{ if } \tuple{x} \geq \tuple{a} \text{ and } \tuple{x} \ngeq \dual \tuple{a}, \\  
  f_{\tuple{a}, \Neither}(\tuple{x}) = \Both & \text{ if } \tuple{x} \ngeq \tuple{a} \text{ and } \tuple{x} \geq \dual \tuple{a}, &  f_{\tuple{a}, \Both}(\tuple{x}) = \Neither & \text{ if } \tuple{x} \ngeq \tuple{a} \text{ and } \tuple{x} \geq \dual \tuple{a}, \\  
  f_{\tuple{a}, \Neither}(\tuple{x}) = \False & \text{ if } \tuple{x} \ngeq \tuple{a} \text{ and } \tuple{x} \ngeq \dual \tuple{a}, &  f_{\tuple{a}, \Both}(\tuple{x}) = \False & \text{ if } \tuple{x} \ngeq \tuple{a} \text{ and } \tuple{x} \ngeq \dual \tuple{a}.
\end{align*}
  It remains to show that $f_{\tuple{a}, \Neither}$, $f_{\tuple{a}, \Both}$, and $f_{\tuple{a}, \True}$ belong to $\clonegen{\DLatClone, \dual}$. Clearly $f_{\tuple{a}, \Both}(\tuple{x}) = \dual f_{\tuple{a}, \Neither}(\tuple{x})$, therefore it suffices to prove this for $f_{\tuple{a}, \True}$ and $f_{\tuple{a}, \Neither}$.

  Let $g(\tuple{x})$ be the conjunction of the following functions:
\begin{align*}
&  \Box x_{i} \text{ for each } a_{i} \text{ such that } a_{i} = \True, \\
&  \Diamond x_{i} \text{ for each } a_{i} \text{ such that } a_{i} > \False, \\
&  \Box (x_{i} \vee x_{j}) \text{ for each } a_{i}, a_{j} \text{ such that } \{ a_{i}, a_{j} \} = \{ \Neither, \Both \}, \\
&  \Diamond (x_{i} \wedge x_{j}) \text{ for each } a_{i}, a_{j} \text{ such that } a_{i} = a_{j} = \Neither \text{ or } a_{i} = a_{j} = \Both.
\end{align*}
  We claim that $g(\tuple{x}) = f_{\tuple{a}, \True}(\tuple{x})$. Clearly $g$ is a positive harmonious function such that $g(\tuple{a}) = \True$, hence $g(\tuple{x}) = \True$ if $\tuple{x} \geq \tuple{a}$ or $\tuple{x} \geq \dual \tuple{a}$. If $\tuple{x} \ngeq \tuple{a}$ and $\tuple{x} \ngeq \dual \tuple{a}$, then there are $x_{i}$ and $x_{j}$ such that $x_{i} \ngeq a_{i}$ and $x_{j} \ngeq \dual a_{j}$. Clearly $a_{i} > \False$ and $a_{j} > \False$, while $x_{i} < \True$ and $x_{j} < \True$. If $a_{i} = \True$ or $a_{j} = \True$, then $g(\tuple{x}) = \False$ thanks to the conjuncts $\Box x_{i}$ and $\Box x_{j}$. If $x_{i} = \False$ or $x_{j} = \False$, then $g(\tuple{x}) = \False$ thanks to the conjuncts $\Diamond x_{i}$ and $\Diamond x_{j}$. Finally, if $a_{i}, a_{j}, x_{i}, x_{j} \in \{ \Neither, \Both \}$, then $g(\tuple{x}) = \False$ thanks to the conjuncts $\Box (x_{i} \vee x_{j})$ and $\Diamond (x_{i} \wedge x_{j})$ as in (i).

  Moreover, $f_{\tuple{a}, \Neither}(\tuple{x}) = f_{\tuple{a}, \True}(\tuple{x}) \wedge x_{i}$ if $a_{i} = \Neither$, and $f_{\tuple{a}, \Neither}(\tuple{x}) = f_{\tuple{a}, \True}(\tuple{x}) \wedge \dual x_{i}$ if $a_{i} = \Both$. But by the assumption that $\tuple{a} \nsubseteq \{ \True, \False \}$ some such $a_{i}$ exists.

  (iii) Recall that each persistent harmonious function preserves both $\Kthree$ and $\Pthree$. There are smallest persistent harmonious functions $f_{\tuple{a}, \Neither}$ and $f_{\tuple{a}, \Both}$ such that $f_{\tuple{a}, \Neither}(\tuple{a}) = \Neither$ and $f_{\tuple{a}, \Both}(\tuple{a}) = \Both$, provided that $\Neither \in \tuple{a}$ in the former case and~$\Both \in \tuple{a}$ in the latter case, namely
\begin{align*}
  f_{\tuple{a}, \Neither}(\tuple{x}) = \Neither & \text { if } \tuple{x} \infleq \tuple{a}, &  f_{\tuple{a}, \Both}(\tuple{x}) = \Both & \text { if } \tuple{x} \infgeq \tuple{a}, \\
  f_{\tuple{a}, \Neither}(\tuple{x}) = \Both & \text{ if } \tuple{x} \infgeq \dual \tuple{a}, & f_{\tuple{a}, \Both}(\tuple{x}) = \Neither & \text{ if } \tuple{x} \infleq \dual \tuple{a}, \\
  f_{\tuple{a}, \Neither}(\tuple{x}) = \False & \text{ otherwise}, & f_{\tuple{a}, \Both}(\tuple{x}) = \False & \text{ otherwise}.
\end{align*}
  It remains to show that $f_{\tuple{a}, \Neither}$, $f_{\tuple{a}, \Both}$, and $f_{\tuple{a}, \True}$ belong to $\DMAClone$.

  Let us define the function $g$ as
\begin{align*}
  g(\tuple{x}) & = \smashoperator{\bigwedge_{a_{i} \in \{ \True, \Neither \}}} x_{i} ~ \wedge ~ \smashoperator{\bigwedge_{a_{i} \in \{ \False, \Neither \}}} \dmneg x_{i}.
\end{align*}
  We claim that $g(\tuple{x}) = f_{\tuple{a}, \Neither}(\tuple{x})$. Clearly $g$ is a persistent harmonious function such that $g(\tuple{a}) = \Neither$ (using the assumption that $\Neither \in \tuple{a}$), hence $g(\tuple{x}) = \Neither$ if $\tuple{x} \infleq \tuple{a}$ and $g(\tuple{x}) = \Both$ if $\tuple{x} \infgeq \dual \tuple{a}$. If $\tuple{x} \ninfleq \tuple{a}$ and $\tuple{x} \ninfgeq \dual \tuple{a}$, then there are $x_{i}$ and $x_{j}$ such that $x_{i} \ninfleq a_{i}$ and $x_{j} \ninfgeq \dual a_{j}$. It follows that $a_{i} \inflneq \Both$ and $a_{j} \inflneq \Both$, while $x_{i} \infgneq \Neither$ and $x_{j} \inflneq \Both$. If $x_{i} \in \{ \True, \False \}$, then either $a_{i} = \Neither$ or $a_{i} = \dmneg x_{i}$, hence $g(\tuple{x}) = \False$. If~$x_{j} \in \{ \True, \False \}$, then either $a_{j} = \Neither$ or $a_{j} = \dmneg x_{j}$, hence $g(\tuple{x}) = \False$. Otherwise $x_{i} = \Both$ and $x_{j} = \Neither$, hence $g(\tuple{x}) \leq \Both \wedge \Neither = \False$.

  It follows by conflation symmetry that $f_{\tuple{a}, \Both}$ can be expressed as
\begin{align*}
  f_{\tuple{a}, \Both}(\tuple{x}) & = \smashoperator{\bigwedge_{a_{i} \in \{ \True, \Both \}}} x_{i} ~ \wedge ~ \smashoperator{\bigwedge_{a_{i} \in \{ \False, \Both \}}} \dmneg x_{i}.
\end{align*}

  There is moreover a smallest persistent harmonious function $f_{\tuple{a}, \True}$ such that $f_{\tuple{a}, \True}(\tuple{a}) = \True$, namely
\begin{align*}
  f_{\tuple{a}, \True}(\tuple{x}) = \True & \text { if } \tuple{x} \in \{ \tuple{a}, \dual \tuple{a} \}, \\
  f_{\tuple{a}, \True}(\tuple{x}) = \Neither & \text{ if } \tuple{x} \inflneq \tuple{a} \text{ or } \tuple{x} \inflneq \dual \tuple{a}, \\
  f_{\tuple{a}, \True}(\tuple{x}) = \Both & \text{ if } \tuple{a} \inflneq \tuple{x} \text{ or } \tuple{x} \infgneq \dual \tuple{a}, \\
  f_{\tuple{a}, \True}(\tuple{x}) = \False & \text{ otherwise}.
\end{align*}

  Let $g(\tuple{x}) \assign f_{\tuple{a}, \Neither}(\tuple{x}) \vee f_{\tuple{a}, \Both}(\tuple{x})$. We claim that $g(\tuple{x}) = f_{\tuple{a}, \True}(\tuple{x})$. Clearly $g$ is a harmonious function such that $g(\tuple{a}) = \True$, since $f_{\tuple{a}, \Neither}(\tuple{a}) \geq \Neither$ and $f_{\tuple{a}, \Both}(\tuple{a}) \geq \Both$, therefore also $g(\dual \tuple{a}) = \True$. If $\tuple{x}$ and $\tuple{a}$ are incomparable in the information order and so are $\tuple{x}$ and $\dual \tuple{a}$, then $f_{\tuple{a}, \Neither}(\tuple{x}) = \False$ and $f_{\tuple{a}, \Both}(\tuple{x}) = \False$, hence $g(\tuple{x}) = \True$.

  If $\tuple{x} \inflneq \tuple{a}$, then $f_{\tuple{a}, \Neither}(\tuple{x}) = \Neither$ and $f_{\tuple{a}, \Both}(\tuple{x}) \leq \Neither$, hence $g(\tuple{x}) = \Neither$. If $\tuple{x} \infgneq \tuple{a}$, then $f_{\tuple{a}, \Neither}(\tuple{x}) \leq \Both$ and $f_{\tuple{a}, \Both}(\tuple{x}) = \Both$, hence $g(\tuple{x}) = \Both$. Likewise, if $\tuple{x} \inflneq \dual \tuple{a}$, then $f_{\tuple{a}, \Neither}(\tuple{x}) \leq \Neither$ and $f_{\tuple{a}, \Both}(\tuple{x}) = \Neither$, hence $g(\tuple{x}) = \Neither$. If $x \infgneq \dual \tuple{a}$, then $f_{\tuple{a}, \Neither}(\tuple{x}) = \Both$ and $f_{\tuple{a}, \Both}(\tuple{x}) \leq \Both$, hence $g(\tuple{x}) = \Both$.

  (iv) Recall that a harmonious function preserves $\Btwo, \Kthree, \Pthree$ if and only if it preserves $\Kthree$, or equivalently if and only if it preserves $\Pthree$. The smallest harmonious function $f_{\tuple{a}, \True}$ such that $f_{\tuple{a}, \True}(\tuple{a}) = \True$, described in (i), preserves $\Btwo$, $\Kthree$, $\Pthree$ and belongs to $\clonegen{\DMAClone, \Box}$. The smallest harmonious function $f_{\tuple{a}, \Neither}$ such that $f_{\tuple{a}, \Neither}(\tuple{a}) = \Neither$, described in (i), preserves $\Pthree$ and belongs to $\clonegen{\DMAClone, \Box}$ whenever $\Neither \in \tuple{a}$. The smallest harmonious function $f_{\tuple{a}, \Both}$ such that $f_{\tuple{a}, \Both}(\tuple{a}) = \Both$, described in (i), preserves $\Kthree$ and belongs to $\clonegen{\DMAClone, \Box}$ whenever $\Both \in \tuple{a}$. The claim now follows.

  (v) The smallest positive harmonious function $f_{\tuple{a}, \True}$ such that $f_{\tuple{a}, \True}(\tuple{a}) = \True$, described in (ii), preserves $\Btwo$, $\Kthree$, $\Pthree$ and belongs to $\clonegen{\DLatClone, \Box, \Diamond}$. There is a smallest positive harmonious function $f_{\tuple{a}, \Neither}$ preserving $\Btwo, \Kthree, \Pthree$ such that $f_{\tuple{a}, \Neither}(\tuple{a}) = \Neither$, provided that $\Neither \in \tuple{a}$, namely
\begin{align*}
  f_{\tuple{a}, \Neither}(\tuple{x}) = \False & \text{ if } \tuple{x} \ngeq \tuple{a} \text{ and } \tuple{x} \ngeq \dual \tuple{a}, \\
  f_{\tuple{a}, \Neither}(\tuple{x}) = \Neither & \text{ if } \tuple{x} \geq \tuple{a} \text{ and } \tuple{x} \ngeq \dual \tuple{a} \text{ and } x_{i} = \Neither \text{ for some } a_{i} = \Neither, \\
  f_{\tuple{a}, \Neither}(\tuple{x}) = \Both & \text{ if } \tuple{x} \geq \dual \tuple{a} \text{ and } \tuple{x} \ngeq \tuple{a} \text{ and } x_{i} = \Both \text{ for some } a_{i} = \Neither, \\
  f_{\tuple{a}, \Neither}(\tuple{x}) = \True & \text{ otherwise}.
\end{align*}
  That is, $f_{\tuple{a}, \Neither}(\tuple{x}) = \True$ if one of the following conditions holds:
\begin{align*}
  \text{either $\tuple{x} \geq \tuple{a} ~ \& ~ \tuple{x} \geq \dual \tuple{a}$ or $\tuple{x} \geq \tuple{a} ~ \& ~ \Neither \notin \tuple{x}$ or $\tuple{x} \geq \dual \tuple{a} ~ \& ~ \Both \notin \tuple{x}$}.
\end{align*}
  We now prove that $f_{\tuple{a}, \Neither}$ belongs to $\clonegen{\DLatClone, \Box, \Diamond}$.

  Let $g$ be the conjunction of the following functions:
\begin{align*}
& x_{i} \text{ for each } a_{i} \text{ such that } a_{i} = \Neither, \\
&  \Box x_{i} \text{ for each } a_{i} \text{ such that } a_{i} = \True, \\
&  \Diamond x_{i} \text{ for each } a_{i} \text{ such that } a_{i} > \False, \\
&  \Box (x_{i} \vee x_{j}) \text{ for each } a_{i}, a_{j} \text{ such that } \{ a_{i}, a_{j} \} = \{ \Neither, \Both \}, \\
& \Diamond (x_{i} \wedge x_{j}) \text{ for each } a_{i}, a_{j} \text{ such that } a_{i} = a_{j} = \Neither \text{ or } a_{i} = a_{j} = \Both.
\end{align*}
  We claim that $g(\tuple{x}) = f_{\tuple{a}, \Neither}(\tuple{x})$. Clearly $g$ is a positive harmonious function preserving $\Btwo, \Kthree, \Pthree$ such that $g(\tuple{a}) = \Neither$ (using the assumption that $\Neither \in \tuple{a}$), hence $g(\tuple{x}) \geq \Neither$ if $\tuple{x} \geq \tuple{a}$ and $g(\tuple{x}) \geq \Both$ if $\tuple{x} \geq \dual \tuple{a}$. It also follows that $g(\tuple{x}) = \True$ if $f_{\tuple{a}, \Neither}(\tuple{x}) = \True$. If $\tuple{x} \geq \tuple{x}$ and $\tuple{x} \ngeq \dual \tuple{a}$ and $x_{i} = \Neither$ for some $a_{i} = \Neither$, then $g(\tuple{x}) = \Neither$ thanks to the conjunct $x_{i}$. It follows by harmonicity that $g(\tuple{x}) = \Both$ if $\tuple{x} \geq \dual \tuple{a}$ and $\tuple{x} \ngeq \tuple{a}$ and $x_{i} = \Both$ for some $a_{i} = \Neither$. Finally, suppose that $\tuple{x} \ngeq \tuple{a}$ and $\tuple{x} \ngeq \dual \tuple{a}$. Then there are $x_{i}$ and $x_{j}$ such that $x_{i} \ngeq a_{i}$ and $x_{j} \ngeq a_{j}$. It now follows that $g(\tuple{x} = \False$ by the same argument as in (ii).

  By conflation symmetry there is also a smallest positive harmonious function $f_{\tuple{a}, \Both}$ preserving $\Btwo$, $\Kthree$, $\Pthree$ such that $f_{\tuple{a}, \Both}(\tuple{a}) = \Both$, and it belongs to the clone $\clonegen{\DLatClone, \Box, \Diamond}$.
\end{proof}

  We now turn to positive persistent clones, possibly preserving $\Btwo$, $\Kthree$, or~$\Pthree$. Our strategy here will be different: instead of describing the minimal such functions, we shall rely on the observation that each positive persistent function is uniquely determined by its values on $\Btwo$.

  For the purposes of the following lemma, a function $f\colon \Btwo^{n} \to \DMfour$ will be called \emph{positive} if $\tuple{a} \leq \tuple{b}$ implies $f(\tuple{a}) \leq f(\tuple{b})$.

\begin{lemma}[Extension Lemma] \label{extension lemma}
\theoremstartswithitemize
\begin{enumerate}[\rm(i)]
\item Each positive $f \colon \Btwo^{n} \to \DMfour$ extends to a function in $\BiLatClone$.
\item Each positive $f \colon \Btwo^{n} \to \Kthree$ extends to a function in $\clonegen{\DLatClone, \Neither}$.
\item Each positive $f \colon \Btwo^{n} \to \Pthree$ extends to a function in $\clonegen{\DLatClone, \Both}$.
\item Each positive $f \colon \Btwo^{n} \to \Btwo$ extends to a function in $\DLatClone$.
\end{enumerate}
\end{lemma}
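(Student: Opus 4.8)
The plan is to establish part~(iv) first and then reduce the remaining cases to it. For~(iv), given a positive (hence truth-order monotone) function $f \colon \Btwo^{n} \to \Btwo$, I would write down its monotone disjunctive normal form
\begin{align*}
  g(\tuple{x}) = \bigvee_{f(\tuple{a}) = \True} \; \bigwedge_{a_{i} = \True} x_{i},
\end{align*}
reading the empty meet as $\True$ and the empty join as $\False$, so that the constant functions are covered by the constants of $\DLatClone$. A routine check shows that for $\tuple{b} \in \Btwo^{n}$ one has $g(\tuple{b}) = \True$ exactly when $\tuple{b} \geq \tuple{a}$ for some true point $\tuple{a}$ of $f$, which by monotonicity happens precisely when $f(\tuple{b}) = \True$. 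Thus $g \in \DLatClone$ and $g$ restricts to $f$ on $\Btwo^{n}$.

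The key idea for parts~(i)--(iii) is that the truth order on $\DMfour$ is isomorphic to the product order on $\Btwo \times \Btwo$ via $a \mapsto \pair{\nabla a}{\Delta a}$, under which $\wedge$ and $\vee$ act coordinatewise and the constants become the coordinate selectors $\Neither = \pair{\True}{\False}$ and $\Both = \pair{\False}{\True}$; equivalently, the identity $a = (\nabla a \wedge \Neither) \vee (\Delta a \wedge \Both)$ holds for every $a \in \DMfour$. Given a positive $f \colon \Btwo^{n} \to \DMfour$, the two functions $\nabla \circ f$ and $\Delta \circ f$ are positive maps $\Btwo^{n} \to \Btwo$, since $\nabla$ and $\Delta$ are truth-order monotone, so by~(iv) they extend to functions $t_{1}, t_{2} \in \DLatClone$. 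I then recombine them so as to recover $f$ coordinatewise: a finite case distinction over the pairs $\pair{\nabla f(\tuple{a})}{\Delta f(\tuple{a})}$ shows that
\begin{align*}
  g(\tuple{x}) = (t_{1}(\tuple{x}) \wedge \Neither) \vee (t_{2}(\tuple{x}) \wedge \Both)
\end{align*}
restricts to $f$ on $\Btwo^{n}$, which settles~(i) because $g \in \clonegen{\DLatClone, \Neither, \Both} = \BiLatClone$. For~(ii) the value $\Both$ never occurs, so the pair $\pair{\False}{\True}$ is absent and the term simplifies to $g(\tuple{x}) = (t_{1}(\tuple{x}) \wedge \Neither) \vee t_{2}(\tuple{x}) \in \clonegen{\DLatClone, \Neither}$; symmetrically, for~(iii) one takes $g(\tuple{x}) = t_{1}(\tuple{x}) \vee (t_{2}(\tuple{x}) \wedge \Both) \in \clonegen{\DLatClone, \Both}$. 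Alternatively, (iii)~is the conflation dual of~(ii), since the conflation symmetry swaps $\Neither$ with $\Both$ and $\Kthree$ with $\Pthree$ while fixing $\wedge$, $\vee$, and the Boolean values.

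The substantive part is not any single calculation but the recognition of the coordinate decomposition: once one sees that $a = (\nabla a \wedge \Neither) \vee (\Delta a \wedge \Both)$ and that $\nabla, \Delta$ are monotone, everything reduces to the monotone case~(iv) plus a four-row verification table. The only points requiring care are the empty-meet and empty-join conventions in~(iv), needed to handle the constant functions, and the observation that $t_{1}, t_{2}$ take Boolean values on $\Btwo^{n}$ (so that the recombination really computes the product-coordinate reconstruction there), which holds because they represent Boolean functions on $\Btwo^{n}$.
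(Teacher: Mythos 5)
Your proposal is correct -- the recombination identity $a = (\nabla a \wedge \Neither) \vee (\Delta a \wedge \Both)$, the monotonicity of $\nabla$ and $\Delta$, and the simplified terms in cases (ii) and (iii) all check out -- but it is organized differently from the paper's proof. The paper handles all four items with a single construction: for each Boolean tuple $\tuple{a}$ it forms the minterm $g_{\tuple{a}, \True}(\tuple{x}) = \bigwedge_{a_{i} = \True} x_{i}$, masks it with a constant by setting $g_{\tuple{a}, \Neither} = g_{\tuple{a}, \True} \wedge \Neither$ and $g_{\tuple{a}, \Both} = g_{\tuple{a}, \True} \wedge \Both$, and takes the join of $g_{\tuple{a}, f(\tuple{a})}$ over all $\tuple{a}$ with $f(\tuple{a}) \neq \False$; positivity of $f$ makes this join collapse to $f(\tuple{b})$ at every Boolean point $\tuple{b}$, and items (ii)--(iv) then follow at once by observing which of the constants $\Neither$, $\Both$ actually occur in the term. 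You instead prove the Boolean case (iv) first, by the same monotone DNF, and then reduce (i)--(iii) to it through the coordinate decomposition $a \mapsto \pair{\nabla a}{\Delta a}$, i.e.\ through the isomorphism of the truth lattice with $\Btwo \times \Btwo$, extending $\nabla \circ f$ and $\Delta \circ f$ separately and recombining. After distributing $\wedge$ over $\vee$ your term and the paper's coincide, so the two constructions produce the same function; the difference is architectural. The paper's version is more compact and uniform across the four cases, while yours isolates the classical fact about monotone Boolean functions as the engine and makes the product structure of the truth order explicit -- a perspective consonant with the paper's subsequent use of restriction to $\Btwo$ (via the Uniqueness Lemma, Lemma~\ref{uniqueness lemma}, and the corollary that the lattice of De~Morgan clones below $\DLatClone$ is isomorphic to the lattice of Boolean clones below $\DLatClone$). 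No gaps.
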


\begin{proof}
  Consider the functions
\begin{align*}
  g_{\tuple{a}, \True}(\tuple{x}) & \assign \smashoperator{\bigwedge_{a_{i} = \True}} x_{i}.
\end{align*}
  For each \emph{Boolean} tuple $\tuple{a} \subseteq \{ \True, \False \}$ these functions satisfy
\begin{align*}
  g_{\tuple{a}, \True}(\tuple{x}) = \True & \text{ if } \tuple{x} \geq \tuple{a}, \\
  g_{\tuple{a}, \True}(\tuple{x}) = \False & \text{ otherwise}.
\end{align*}
  Let $g_{\tuple{a}, \Neither}(\tuple{x}) \assign g_{\tuple{a}, \True}(\tuple{x}) \wedge \Neither$ and $g_{\tuple{a}, \Both}(\tuple{x}) \assign g_{\tuple{a}, \True}(\tuple{x}) \wedge \Both$. The desired function $g$ can now be expressed as
\begin{align*}
  g(\tuple{x}) & = \smashoperator{\bigvee_{f(\tuple{a}) = \True}} g_{\tuple{a}, \True}(\tuple{x}) ~ \vee ~ \smashoperator{\bigvee_{f(\tuple{a}) = \Neither}} g_{\tuple{a}, \Neither}(\tuple{x}) ~ \vee ~ \smashoperator{\bigvee_{f(\tuple{a}) = \Both}} g_{\tuple{a}, \Both}(\tuple{x}).
\end{align*}
  If $\Neither$ or $\Both$ or both lie outside the range of $f$, then the function $g$ lies in the appropriate clone.
\end{proof}

\begin{lemma}[Uniqueness Lemma] \label{uniqueness lemma}
  Every positive persistent De Morgan function of arity $n$ is uniquely determined by the values it takes on $\Btwo^{n}$.
\end{lemma}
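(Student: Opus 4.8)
The plan is to show that the value $f(\tuple{a})$ of a positive persistent function at an arbitrary tuple $\tuple{a} \in \DMfour^{n}$ is pinned down by just two independent bits of information, and that each of these bits can be read off from a suitable \emph{Boolean} tuple. Recall that every truth value is uniquely determined by whether it is true and whether it is false: the four combinations correspond exactly to $\True$ (true, not false), $\Both$ (true and false), $\False$ (not true, false), and $\Neither$ (not true, not false). It therefore suffices to establish two claims, namely \emph{(A)} whether $f(\tuple{a})$ is true depends only on which coordinates $a_{i}$ are true, and \emph{(B)} whether $f(\tuple{a})$ is false depends only on which coordinates $a_{i}$ are false.

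The crux is Claim (A). Fix a coordinate $i$ and alter $a_{i}$ so as to keep its truth status fixed while toggling its falsity status: replace $\True$ by $\Both$ when $a_{i}$ is true, and replace $\Neither$ by $\False$ when $a_{i}$ is not true (or the reverse replacements). Using the characterisations of $\leq$ and $\infleq$ recalled in the preliminaries, one checks that raising the falsity bit of a single coordinate in this way is simultaneously a step up the information order and a step down the truth order. The set $\{ \True, \Both \}$ of true values is an up-set in \emph{both} orders. Hence persistence shows that if $f(\tuple{a})$ is true before the falsity bit is raised then it is true afterwards, while positivity shows the converse; the two monotonicities together squeeze the truth status of $f$ to be invariant under any such toggle. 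Since any two tuples with the same set of true coordinates are connected by a sequence of such single-coordinate toggles, iterating yields Claim (A). Claim (B) follows by the entirely analogous argument (or via the truth--information symmetry) with true and false, and the two orders, interchanged; here the relevant facts are that toggling the truth bit of a coordinate is simultaneously a step up the information order and a step up the truth order, and that $\{ \False, \Both \}$ is an up-set in the information order but a down-set in the truth order.

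Granting the two claims, the lemma is immediate. Given $\tuple{a}$, let $\tuple{b} \in \Btwo^{n}$ be the Boolean tuple with $b_{i} = \True$ precisely when $a_{i}$ is true, and let $\tuple{c} \in \Btwo^{n}$ be the Boolean tuple with $c_{i} = \False$ precisely when $a_{i}$ is false. By Claim (A), $f(\tuple{a})$ is true iff $f(\tuple{b})$ is true; by Claim (B), $f(\tuple{a})$ is false iff $f(\tuple{c})$ is false. Thus $f(\tuple{a})$ is completely determined by the two values $f(\tuple{b})$ and $f(\tuple{c})$, both of which are taken on $\Btwo^{n}$, and so any two positive persistent functions that agree on $\Btwo^{n}$ also agree at $\tuple{a}$.

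I expect the main obstacle to be the squeeze underlying Claim (A): one must recognise that a single-coordinate falsity toggle pushes the information order and the truth order in opposite directions, so that positivity and persistence pull against each other and jointly force invariance of truthness. This opposition is precisely what makes the conjunction of positivity and persistence so much more rigid than either condition on its own, and it is the only genuinely non-routine point; verifying the directions of the toggles and the (up/down)-closure of $\{ \True, \Both \}$ and $\{ \False, \Both \}$ is then a mechanical check against the order characterisations from the preliminaries.
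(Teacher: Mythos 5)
Your proof is correct, and it takes a genuinely different route from the paper's. The paper proceeds by induction on the number of non-Boolean coordinates: writing $g(x) = f(\tuple{a}, x)$, it shows by an explicit case analysis on the pair $(g(\False), g(\True))$ that the values $g(\Neither)$ and $g(\Both)$ are forced by the order constraints coming from positivity ($g(\False) \leq g(\Neither), g(\Both) \leq g(\True)$) and persistence ($g(\Neither) \infleq g(\False), g(\True)$ and $g(\False), g(\True) \infleq g(\Both)$), so that non-Boolean entries can be eliminated one coordinate at a time. You instead prove a structural factorization: the truth bit of $f(\tuple{a})$ depends only on the truth bits of the coordinates, and the falsity bit only on the falsity bits, each claim obtained by a squeeze in which a single-coordinate toggle moves the two orders in opposite (resp.\ parallel) directions while $\{ \True, \Both \}$ is an up-set in both orders and $\{ \False, \Both \}$ is an up-set in the information order but a down-set in the truth order. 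I checked the toggle directions and closure properties you invoke, and they are all as you state, so the squeeze is sound; the passage from single toggles to arbitrary tuples with the same truth (falsity) pattern is also fine. What your approach buys is conceptual clarity and a slightly stronger conclusion -- the truth conditions and falsity conditions of a positive persistent function factor separately through the Boolean truth and falsity patterns of the input -- with no case analysis at all. What the paper's approach buys is an explicit recipe for computing the value of $f$ at a non-Boolean tuple from its Boolean values, which is the form in which the lemma is combined with the Extension Lemma in the proof of Theorem~\ref{positive persistent clones}; your argument yields the same consequence, since it exhibits $f(\tuple{a})$ as determined by the two Boolean evaluations $f(\tuple{b})$ and $f(\tuple{c})$.
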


\begin{proof}
  We show that for each positive persistent De~Morgan function $f$ the values $f(\tuple{a}, \Neither)$ and $f(\tuple{a}, \Both)$ are uniquely determined by the values $f(\tuple{a}, \False)$ and $f(\tuple{a}, \True)$. The value $f(\tuple{a})$ can therefore be computed from the values $f(\tuple{x})$ for $\tuple{x} \subseteq \Btwo$ by induction over the number of (non-Boolean) arguments. 

  Let $g(x) = f(\tuple{a}, x)$. We compute the values $g(\Neither)$ and $g(\Both)$ from the values $g(\True)$ and $g(\False)$. If $g(\False) = g(\True)$, then clearly $g(\False) = g(\Neither) = g(\Both) = g(\True)$. Also, if $g(\False) = \True$, then $g(\True) = g(\Neither) = g(\Both) = \True$.

  Suppose first that $g(\False) = \Neither$. Then $g(\Neither) = \Neither$, since $g(\Neither) \infleq g(\False) = \Neither$. Moreover, $g(\True) \in \{ \True, \Neither \}$, since $g(\True) \geq g(\False) = \Neither$. If $g(\True) = \True$, then $g(\Both) = \True$, since $g(\Both) \infgeq g(\True) = \True$ and $g(\Both) \leq g(\False) = \Neither$. If $g(\True) = \Neither$, then $g(\Both) = \Neither$, since $g(\False) = g(\True)$.

  Suppose that $g(\False) = \Both$. Then $g(\Both) = \Both$, since $g(\Both) \infgeq g(\False) = \Both$. Moreover, $g(\True) \in \{ \True, \Both \}$, since $g(\True) \geq g(\False) = \Both$. If $g(\True) = \True$, then $g(\Neither) = \True$, since $g(\Neither) \infleq g(\False) = \True$ and $g(\Neither) \geq g(\False) = \Both$. If $g(\True) = \Both$, then $g(\Neither) = \Both$, since $g(\False) = g(\True)$.

  Finally, suppose that $g(\False) = \False$. If $g(\True) = \True$, then $g(\Neither) = \Neither$ and $g(\Both) = \Both$, since $g(\Neither) \infleq g(\False) = \False$ and $g(\Neither) \infleq g(\True) = \True$ and $g(\Both) \infgeq g(\True) = \True$ and $g(\Both) \infgeq g(\False) = \False$. If $g(\True) = \Neither$, then $g(\Neither) = \Neither$, since $g(\Neither) \infleq g(\True) = \Neither$, and $g(\Both) = \False$, since $g(\Both) \leq g(\True) = \Neither$ and $g(\Both) \infgeq g(\False) = \False$. Likewise, if $g(\True) = \Both$, then $g(\Both) = \Both$ and $g(\Neither) = \False$.
\end{proof}

\begin{theorem}[Positive persistent clones] \label{positive persistent clones}
  A De Morgan function lies in
\begin{enumerate}[\rm(i)]
\item $\BiLatClone$ iff it is positive and persistent,
\item $\DLatClone$ iff it is positive, persistent, and preserves $\Btwo$,
\item $\clonegen{\DLatClone, \Neither}$ iff it is positive, persistent, and preserves $\Kthree$,
\item $\clonegen{\DLatClone, \Both}$ iff it is positive, persistent, and preserves $\Pthree$.
\end{enumerate}
\end{theorem}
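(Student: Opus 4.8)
The plan is to deduce all four equivalences from the Extension Lemma (Lemma~\ref{extension lemma}) together with the Uniqueness Lemma (Lemma~\ref{uniqueness lemma}). I would first dispose of the left-to-right implications, since these are needed in order to run the argument in the opposite direction.

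For the left-to-right direction I would invoke the general fact that, for any finitary relation or subset $R$ on $\DMfour$, the functions preserving $R$ form a clone. A function is positive exactly when it preserves the truth order $\leq$ viewed as a binary relation, and persistent exactly when it preserves the information order $\infleq$, while preserving $\Btwo$, $\Kthree$, or $\Pthree$ is preservation of a subset. It therefore suffices to check each defining property on the generators. The generators $\wedge, \vee, \True, \False, \infwedge, \infvee, \Both, \Neither$ of $\BiLatClone$ are all positive and persistent, because in the distributive bilattice on $\DMfour$ each of the four lattice operations is monotone with respect to both orders and constants are trivially monotone; this gives the forward implication of (i). For (ii)--(iv) one additionally checks preservation of the relevant subset: $\Btwo$ is closed under $\wedge, \vee$ and contains $\True, \False$; $\Kthree$ is closed under $\wedge, \vee$ and contains $\True, \False, \Neither$; and $\Pthree$ is closed under $\wedge, \vee$ and contains $\True, \False, \Both$.

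For the right-to-left direction, let $f$ be a positive persistent De~Morgan function of arity $n$ and consider its restriction to Boolean arguments, i.e.\ the map $\tuple{a} \mapsto f(\tuple{a})$ for $\tuple{a} \in \Btwo^{n}$. Since $f$ is positive, this is a positive function $\Btwo^{n} \to \DMfour$, so by the Extension Lemma it extends to some $g \in \BiLatClone$. By the forward direction of (i) already established, $g$ is itself positive and persistent, and by construction $f$ and $g$ agree on $\Btwo^{n}$; the Uniqueness Lemma then forces $f = g$, whence $f \in \BiLatClone$, proving (i). For (ii)--(iv) the argument is identical, except that the preservation hypothesis pins down the range of the restriction: if $f$ preserves $\Btwo$ (resp.\ $\Kthree$, $\Pthree$) then its restriction to $\Btwo^{n}$ takes values in $\Btwo$ (resp.\ $\Kthree$, $\Pthree$), so the appropriate clause of the Extension Lemma yields an extension $g$ lying in $\DLatClone$ (resp.\ $\clonegen{\DLatClone, \Neither}$, $\clonegen{\DLatClone, \Both}$), and Uniqueness again gives $f = g$.

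The real content lies entirely in the two lemmas, which are already available; the theorem is essentially their combination. The only point requiring any care is the left-to-right direction, where one must verify persistence of the truth-lattice operations $\wedge, \vee$ and positivity of the information-lattice operations $\infwedge, \infvee$. This is exactly the interlacing property of the bilattice on $\DMfour$, and it could equally well be read off from the prime-filter characterizations of $\leq$ and $\infleq$ recorded in the preliminaries.
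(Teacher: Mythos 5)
Your proposal is correct and follows essentially the same route as the paper: the right-to-left direction restricts $f$ to $\Btwo^{n}$, applies the appropriate clause of the Extension Lemma to obtain an extension in the target clone, and invokes the Uniqueness Lemma to conclude the extension equals $f$, while the left-to-right direction is the routine verification that the generators have the stated preservation properties. The only cosmetic difference is that you spell out the generator-checking (interlacing) argument that the paper dismisses as straightforward.
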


\begin{proof}
  The proofs of these claims are entirely analogous, let us therefore only consider item~(iv). Each function of $\clonegen{\DLatClone, \Both}$ is positive, persistent, and preserves $\Pthree$. Conversely, let $f$ be a positive persistent function preserving $\Pthree$ and let $g\colon \Btwo \to \DMfour$ be the function obtained from $f$ by restricting the domain to $\Btwo$. Because $f$ preserves $\Pthree$, $g$ is in fact a function $g\colon \Btwo \to \Pthree$. Then by the Extension Lemma (Lemma~\ref{extension lemma}) there is a De~Morgan function $h$ in $\clonegen{\DLatClone, \Both}$ which extends~$g$. In particular, $h$ is also positive and persistent, therefore $f$ and $h$ coincide by the Uniqueness Lemma (Lemma~\ref{uniqueness lemma}).
\end{proof}

\begin{corollary}[The positive persistent harmonious clone]
  $\DLatClone$ is the clone of all positive persistent harmonious functions.
\end{corollary}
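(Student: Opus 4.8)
The plan is to read off this statement as an immediate combination of the characterization of $\DLatClone$ given in Theorem~\ref{positive persistent clones}(ii) with the elementary observation, recorded just before Theorem~\ref{harmonious clones}, that every harmonious function preserves $\Btwo$.

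First I would establish the forward inclusion: every function in $\DLatClone$ is positive, persistent, and harmonious. Positivity and persistence are already supplied by Theorem~\ref{positive persistent clones}(ii), which characterizes $\DLatClone$ as the positive persistent $\Btwo$-preserving functions. For harmonicity it suffices to check that the generators $\wedge$, $\vee$, $\True$, $\False$ are harmonious and that harmonicity --- being the property of commuting with the automorphism $\dual$ --- is inherited under composition. The constants $\True$ and $\False$ are fixed points of $\dual$, hence trivially harmonious; and since $\dual$ merely swaps $\Neither$ and $\Both$ while fixing $\True$ and $\False$, it is an order automorphism of the truth order, so that $\dual(x \wedge y) = \dual x \wedge \dual y$ and dually for $\vee$. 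Thus every term function of $\DLatClone$ is harmonious.

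For the converse, let $f$ be an arbitrary positive persistent harmonious function. Because each harmonious function preserves $\Btwo$, the function $f$ is positive, persistent, and $\Btwo$-preserving, whence $f \in \DLatClone$ by Theorem~\ref{positive persistent clones}(ii). Combining the two inclusions yields the desired equality.

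I do not expect any genuine obstacle here: the corollary is purely a bookkeeping consequence of the two cited facts, and the only point requiring a moment's verification is that $\dual$ respects the truth-lattice operations, which is immediate from its being an automorphism of the truth order.
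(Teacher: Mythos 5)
Your proof is correct and takes essentially the same route as the paper: both directions reduce to Theorem~\ref{positive persistent clones}(ii) together with the observation that harmonious functions preserve $\Btwo$, and conversely that every function of $\DLatClone$ is harmonious. You merely spell out the verification that the generators $\wedge, \vee, \True, \False$ commute with $\dual$, which the paper leaves implicit.
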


\begin{proof}
  Each harmonious function preserves $\Btwo$, and conversely each function in $\DLatClone$ is harmonious.
\end{proof}

  This is the De~Morgan analogue of the classical result for Boolean clones which states the Boolean clone $\DLatClone$ (the restriction of the De~Morgan clone $\DLatClone$ to $\Btwo$) is the clone of all positive Boolean functions.

  The Extension and Uniqueness Lemmas also immediately imply the following corollary relating the lattices of Boolean and De~Morgan clones below the clone generated by $\{\wedge,\vee,\True,\False\}$ on $\Btwo$ and $\DMfour$.

\begin{corollary}
  The lattice of De~Morgan clones below the De~Morgan clone $\DLatClone$ is isomorphic to the lattice of Boolean clones below the Boolean clone $\DLatClone$, the isomorphism being the restriction to~$\Btwo$.
\end{corollary}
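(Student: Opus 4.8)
The plan is to reduce the statement to the fact that restriction to $\Btwo$ is an isomorphism of clones between the De~Morgan clone $\DLatClone$ and the Boolean clone $\DLatClone$, and then to observe that any isomorphism of clones induces an isomorphism between the respective lattices of subclones. Write $r(f) \assign f|_{\Btwo^{n}}$ for the restriction of an $n$-ary De~Morgan function $f$ to the Boolean arguments.

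First I would check that $r$ is a bijection from the De~Morgan clone $\DLatClone$ onto the Boolean clone $\DLatClone$. By Theorem~\ref{positive persistent clones}(ii) every $f \in \DLatClone$ is positive, persistent, and preserves $\Btwo$, so $r(f)$ is a well-defined positive Boolean function, i.e.\ an element of the Boolean clone $\DLatClone$ (recall that this clone consists precisely of the positive Boolean functions). Injectivity is exactly the Uniqueness Lemma (Lemma~\ref{uniqueness lemma}): a positive persistent function is determined by its values on $\Btwo^{n}$. Surjectivity follows from the Extension Lemma (Lemma~\ref{extension lemma}(iv)): every positive $g \colon \Btwo^{n} \to \Btwo$ extends to some $f \in \DLatClone$ with $r(f) = g$.

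Next I would verify that $r$ respects the clone structure, i.e.\ that it sends projections to projections and commutes with composition. The former is clear. For the latter, given $f, h_{1}, \dots, h_{n} \in \DLatClone$ of suitable arities, the crucial point is that each $h_{i}$ preserves $\Btwo$, so for a Boolean tuple $\tuple{a}$ the values $h_{i}(\tuple{a})$ are again Boolean; therefore $r(f(h_{1}, \dots, h_{n})) = r(f)(r(h_{1}), \dots, r(h_{n}))$. Thus $r$ is an isomorphism of clones.

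Finally, I would transfer this to subclones. For a De~Morgan clone $\clone{C} \subseteq \DLatClone$, the image $r[\clone{C}]$ contains all Boolean projections and is closed under composition (both inherited from $\clone{C}$ via the previous paragraph), so it is a Boolean subclone of $\DLatClone$; symmetrically, for a Boolean clone $\clone{D} \subseteq \DLatClone$ the preimage $\set{f \in \DLatClone}{r(f) \in \clone{D}}$ is a De~Morgan subclone of $\DLatClone$. Since $r$ is a bijection on functions, these two assignments are mutually inverse, and both are evidently monotone with respect to inclusion; an order isomorphism between the two complete lattices of subclones is a lattice isomorphism, which is the claim. There is no real obstacle here, as the substantive work has already been carried out in the Extension and Uniqueness Lemmas; the one point requiring care is that restriction commutes with composition, which relies essentially on every function of $\DLatClone$ preserving $\Btwo$ (without this, $r(f(h_{1}, \dots, h_{n}))$ need not equal $r(f)(r(h_{1}), \dots, r(h_{n}))$).
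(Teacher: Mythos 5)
Your proof is correct and takes essentially the same route as the paper, which states this corollary as an immediate consequence of the Extension and Uniqueness Lemmas without further argument; you have simply made explicit the routine verifications the paper leaves implicit (that restriction is a clone isomorphism thanks to $\Btwo$-preservation, and that images and preimages under it give mutually inverse monotone maps between the subclone lattices).
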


  The structure of the lattice Boolean clones is known, therefore we can describe the lattice of De~Morgan clones below $\DLatClone$ explicitly. In particular, it is countably infinite. An interesting question, which we shall not answer here, is whether the lattice of De~Morgan clones below $\DMAClone$ is also countable.

  The following sequence of lemmas will now enable us to identify every De~Morgan function with a pair of harmonious functions, and thereby to exploit our description of the clone of all harmonious positive functions and the clone of all harmonious persistent functions in order to find generating functions for the clones of all positive and all persistent functions.

\begin{definition}[Truth and falsity conditions]
  The \emph{truth conditions} of a De Morgan function $f$ are defined as the set $f^{-1} \{ \True, \Both \}$, while the \emph{falsity conditions} of $f$ are defined as the set $f^{-1} \{ \False, \Both \}$.
\end{definition}

\begin{lemma}[Truth and Falsity Lemma] \label{truth and falsity lemma}
  For each $T, F \subseteq \DMfour^{n}$ there is a unique $n$-ary De Morgan function with truth conditions $T$ and falsity conditions $F$.
\end{lemma}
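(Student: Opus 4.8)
The plan is to exploit the fact, emphasized in the preliminaries, that each truth value is completely determined by whether it is \emph{true} and whether it is \emph{false}. Concretely, the assignment
\begin{align*}
  \True \mapsto \pair{1}{0}, \quad \False \mapsto \pair{0}{1}, \quad \Neither \mapsto \pair{0}{0}, \quad \Both \mapsto \pair{1}{1},
\end{align*}
sending $a \in \DMfour$ to the pair recording whether $a \in \{ \True, \Both \}$ and whether $a \in \{ \False, \Both \}$, is a bijection between $\DMfour$ and $\{ 0, 1 \}^{2}$. This is the only genuinely substantive ingredient, and it is immediate from the definitions of the predicates ``true'' and ``false'': the four truth values are exactly the four possible combinations of these two yes/no answers.

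Given this, I would argue as follows. For any De~Morgan function $f$ and any tuple $\tuple{a} \in \DMfour^{n}$, membership $\tuple{a} \in f^{-1} \{ \True, \Both \}$ says exactly that $f(\tuple{a})$ is true, and $\tuple{a} \in f^{-1} \{ \False, \Both \}$ says exactly that $f(\tuple{a})$ is false. Hence the value $f(\tuple{a})$ is precisely the preimage under the above bijection of the pair whose first coordinate is $1$ if and only if $\tuple{a} \in T$ and whose second coordinate is $1$ if and only if $\tuple{a} \in F$, where $T$ and $F$ denote the truth and falsity conditions of $f$. Uniqueness follows immediately: two functions with the same $T$ and the same $F$ agree, for every argument $\tuple{a}$, on whether the output is true and on whether it is false, hence produce the same output value at $\tuple{a}$.

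For existence, given arbitrary $T, F \subseteq \DMfour^{n}$ I would simply define $f$ pointwise by inverting the bijection:
\begin{align*}
  f(\tuple{a}) = \Both \text{ if } \tuple{a} \in T \cap F, \quad f(\tuple{a}) = \True \text{ if } \tuple{a} \in T \setminus F, \quad f(\tuple{a}) = \False \text{ if } \tuple{a} \in F \setminus T, \quad f(\tuple{a}) = \Neither \text{ otherwise}.
\end{align*}
A direct check confirms that $f(\tuple{a})$ is true exactly when $\tuple{a} \in T$ and false exactly when $\tuple{a} \in F$, so that $f$ has truth conditions $T$ and falsity conditions $F$. I do not expect any real obstacle: the whole statement is a reformulation of the observation that $\DMfour$, viewed as a set equipped with the two predicates ``true'' and ``false'', is a copy of the four-element square $\{ 0, 1 \}^{2}$, and once that is noted both existence and uniqueness are purely formal.
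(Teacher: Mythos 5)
Your proof is correct and essentially identical to the paper's: both construct the same function pointwise by the four-way case split on membership of $\tuple{a}$ in $T$ and $F$, and both get uniqueness from the fact that a value in $\DMfour$ is determined by whether it is true and whether it is false. Your bijection with $\{0,1\}^{2}$ is just a tidy repackaging of the observation the paper leaves implicit.
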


\begin{proof}
  It is easy to see that the unique De~Morgan function with truth conditions $T$ and falsity conditions $F$ is the function
\begin{align*}
  f(\tuple{x}) & = \True \text{ if } \tuple{x} \in T \text{ and } \tuple{x} \notin F, \\
  f(\tuple{x}) & = \False \text{ if } \tuple{x} \notin T \text{ and } \tuple{x} \in F, \\
  f(\tuple{x}) & = \Neither \text{ if } \tuple{x} \notin T \text{ and } \tuple{x} \notin F, \\
  f(\tuple{x}) & = \Both \text{ if } \tuple{x} \in T \text{ and } \tuple{x} \in F. \qedhere
\end{align*}
\end{proof}

  We can in fact construct such a function explicitly if we have a function with truth conditions $T$ and a function with falsity conditions $F$.

\begin{lemma}[Combination Lemma] \label{combination lemma}
  Let $f$ and $g$ be a pair of De Morgan functions of the same arity. Then the unique $n$-ary De~Morgan function with the truth conditions of $f$ and the falsity conditions of~$g$ is the function
\begin{align*}
  (\True \infwedge f(x)) \infvee (\False \infwedge g(x)) = (\Neither \vee f(x)) \wedge (\Both \vee g(x)).
\end{align*}
\end{lemma}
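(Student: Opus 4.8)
The plan is to prove the displayed identity and the assertion about conditions simultaneously, by showing that each of the two expressions has the truth conditions of $f$ and the falsity conditions of $g$; the equality and the characterization then both follow at once from the Truth and Falsity Lemma (Lemma~\ref{truth and falsity lemma}), which says that these conditions determine a unique De~Morgan function. Write
\begin{align*}
  h(x) &\assign (\True \infwedge f(x)) \infvee (\False \infwedge g(x)), & k(x) &\assign (\Neither \vee f(x)) \wedge (\Both \vee g(x)).
\end{align*}

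For the left-hand expression $h$ I would argue in the information lattice, exploiting the fact recalled above that the true elements $\{\True, \Both\}$ and the false elements $\{\False, \Both\}$ are precisely its two prime filters. Membership in a prime filter commutes with $\infwedge$ and $\infvee$ in the expected way: $h(x)$ is true iff $\True \infwedge f(x)$ or $\False \infwedge g(x)$ is true, iff ($\True$ is true and $f(x)$ is true) or ($\False$ is true and $g(x)$ is true). Since $\True$ is true and $\False$ is not, this collapses to $f(x)$ being true. The dual computation with the prime filter $\{\False, \Both\}$ shows that $h(x)$ is false iff $g(x)$ is false.

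For the right-hand expression $k$ I would run the same bookkeeping in the truth lattice. Its truth conditions are governed by the prime filter $\{\True, \Both\}$: here $k(x)$ is true iff both $\Neither \vee f(x)$ and $\Both \vee g(x)$ are true, which, since $\Both$ is true and $\Neither$ is not, reduces to $f(x)$ being true. The falsity conditions are handled using that $\{\False, \Both\}$ is a prime ideal of the truth lattice (it is the complement of the prime filter $\{\True, \Neither\}$); the dual computation then shows $k(x)$ is false iff $g(x)$ is false. Hence $h$ and $k$ share the truth conditions $f^{-1}\{\True, \Both\}$ and the falsity conditions $g^{-1}\{\False, \Both\}$, so by uniqueness they are equal to each other and to the function promised by the lemma.

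The only point requiring care — and the place I expect most slips to occur — is keeping straight which of the sets $\{\True, \Both\}$ and $\{\False, \Both\}$ plays the role of a prime filter or of a prime ideal in which of the two lattices: the true elements form a prime filter of both the information and the truth lattice, whereas the false elements form a prime filter of the information lattice but only a prime ideal of the truth lattice. Granting this bookkeeping, the argument is routine. One could instead verify the identity directly by checking all sixteen value combinations $(f(x), g(x)) \in \DMfour \times \DMfour$, but the prime-filter argument makes transparent both why the identity holds and why each side realizes the desired conditions.
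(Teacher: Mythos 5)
Your proof is correct. Note that the paper itself gives no argument here: its ``proof'' of the Combination Lemma consists of the single sentence that the easy verification is omitted, with a pointer to Lemma~2 of Arieli--Avron's paper on four-valued paradefinite logics. So there is no in-paper argument to compare against; what you have supplied is a genuine proof where the paper defers to a citation. Your route --- showing that each of the two displayed expressions has the truth conditions of $f$ and the falsity conditions of $g$, then invoking the Truth and Falsity Lemma to get both the stated characterization and the equality of the two expressions at once --- is clean and uses exactly the structural facts the paper sets up in the preliminaries ($\{\True,\Both\}$ and $\{\False,\Both\}$ are the prime filters of the information lattice; $\{\True,\Both\}$ and $\{\True,\Neither\}$ are the prime filters of the truth lattice, so $\{\False,\Both\}$ is a prime \emph{ideal} there). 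Your bookkeeping on this last point, which is indeed where a slip would most naturally occur, is accurate: e.g.\ $(\Both \vee g(x)) \in \{\False,\Both\}$ iff both $\Both$ and $g(x)$ lie in this prime ideal, iff $g(x)$ is false, while the conjunct $\Neither \vee f(x)$ can never be false. Compared with the brute-force check of all sixteen cases that the omitted ``easy proof'' presumably envisions, your argument has the advantage of explaining why the identity holds and of making the uniqueness claim and the identity fall out of a single application of Lemma~\ref{truth and falsity lemma}.
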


\begin{proof}
  We omit the easy proof of this claim (see~\cite[Lemma~2]{arieli+avron17}).
\end{proof}

\begin{lemma}[Harmonization Lemma] \label{harmonization lemma}
  For each (positive) De Morgan function there is a unique (positive) harmonious De Morgan function with the same truth conditions. There is also a unique (positive) harmonious De Morgan function with the same falsity conditions.
\end{lemma}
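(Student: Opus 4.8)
The plan is to reduce everything to the Truth and Falsity Lemma (Lemma~\ref{truth and falsity lemma}) by first translating harmonicity into a condition relating the truth conditions $T$ and the falsity conditions $F$ of a function. Write $\dual T \assign \set{\dual \tuple{a}}{\tuple{a} \in T}$ for the image of $T$ under the componentwise conflation. Recalling that $\dual b$ is true if and only if $b$ is non-false, and $\dual b$ is false if and only if $b$ is non-true, one computes directly that the conflation dual $\dual f(\dual \tuple{x})$ of an $n$-ary function $f$ has truth conditions $\DMfour^{n} \setminus \dual F$ and falsity conditions $\DMfour^{n} \setminus \dual T$. Since $f$ is harmonious precisely when it coincides with its own conflation dual, and since a De~Morgan function is determined by its truth and falsity conditions (the uniqueness in Lemma~\ref{truth and falsity lemma}), it follows that $f$ is harmonious if and only if $F = \DMfour^{n} \setminus \dual T$, or equivalently $T = \DMfour^{n} \setminus \dual F$. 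These two conditions are genuinely equivalent because $\dual$ is a bijection of $\DMfour^{n}$ and hence commutes with set complementation.

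Given this characterization, the first two claims are immediate. For the truth-conditions statement, let $T$ be the truth conditions of an arbitrary De~Morgan function. Any harmonious function with truth conditions $T$ is forced to have falsity conditions $\DMfour^{n} \setminus \dual T$, and is therefore unique by Lemma~\ref{truth and falsity lemma}; conversely, the function furnished by that lemma with truth conditions $T$ and falsity conditions $\DMfour^{n} \setminus \dual T$ is harmonious by the characterization above, which gives existence. The falsity-conditions statement is entirely symmetric, using $T = \DMfour^{n} \setminus \dual F$ in place of $F = \DMfour^{n} \setminus \dual T$.

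It remains to handle the parenthetical \emph{positive} versions. Here I would use the fact that $f$ is positive if and only if $T$ is an up-set and $F$ is a down-set in the truth order on $\DMfour^{n}$; this is a direct unpacking of the characterization of $\leq$ in terms of truthhood and falsehood (an element lies below another exactly when the former being non-false forces the latter to be non-false, and the former being true forces the latter to be true). Since $\dual$ is an automorphism of $\DMfouralg$, it is an order automorphism of the truth order, so it sends up-sets to up-sets, and, commuting with complementation, it turns the complement of an up-set into a down-set. Consequently, if $T$ is an up-set then $\DMfour^{n} \setminus \dual T$ is a down-set, so the harmonious function with truth conditions $T$ built above is positive whenever the original function is. Uniqueness of the positive harmonious function then follows a fortiori from the uniqueness already established, and the falsity-conditions case is again symmetric.

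I expect the only delicate point to be the bookkeeping in the first paragraph: correctly tracking how conflation interacts with the true/false dichotomy when computing the conditions of $\dual f(\dual \tuple{x})$, and verifying the equivalence of $F = \DMfour^{n} \setminus \dual T$ with $T = \DMfour^{n} \setminus \dual F$. Once the characterization of harmonicity in terms of $(T, F)$ is in hand, both the existence–uniqueness argument (via Lemma~\ref{truth and falsity lemma}) and the preservation of positivity (via $\dual$ being an order automorphism) are routine.
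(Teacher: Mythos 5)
Your proof is correct and takes essentially the same route as the paper: both rest on the Truth and Falsity Lemma (Lemma~\ref{truth and falsity lemma}) together with the observation that harmonicity forces the falsity conditions to equal $\DMfour^{n} \setminus \dual T$, and the function you obtain that way is exactly the function the paper constructs pointwise. The differences are organizational only: you isolate set-level characterizations of harmonicity and of positivity (handling positivity via the fact that $\dual$ is an order automorphism of the truth order, and the falsity half by symmetry rather than by conjugating with $\dmneg$), whereas the paper verifies the same facts by direct pointwise computation.
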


\begin{proof}
  Let $f$ be a De~Morgan function. There is at most one harmonious De~Morgan function with the same truth (falsity) conditions as $f$, since the falsity (truth) conditions of a harmonious functions are uniquely determined by its truth (falsity) conditions, and a De~Morgan function is uniquely determined by its truth and falsity conditions.

  To prove that there is a harmonious De~Morgan function with the same truth conditions as $f$, consider the De~Morgan function $g$ such that
\begin{align*}
  g(\tuple{a}) = \True & \text{ if } f(\tuple{a}) \geq \True \text{ and } f(\dual \tuple{a}) \geq \True, \\
  g(\tuple{a}) = \Both & \text{ if } f(\tuple{a}) \geq \True \text{ and } f(\dual \tuple{a}) \ngeq \True, \\
  g(\tuple{a}) = \Neither & \text{ if } f(\tuple{a}) \ngeq \True \text{ and } f(\dual \tuple{a}) \geq \True, \\
  g(\tuple{a}) = \False & \text{ if } f(\tuple{a}) \ngeq \True \text{ and } f(\dual \tuple{a}) \ngeq \True.
\end{align*}
  This function is harmonious by definition and it has the same truth conditions as $f$. Moreover, if $f$ is positive and $\tuple{a} \leq \tuple{b}$, then
\begin{align*}
  g(\tuple{a}) \in \{ \True, \Both \} \implies f(\dual \tuple{a}) \geq \True \implies f(\dual \tuple{b}) \geq \True \implies g(\tuple{b}) \in \{ \True, \Both \}, \\
  g(\tuple{a}) \in \{ \True, \Neither \} \implies f(\dual \tuple{a}) \geq \True \implies f(\dual \tuple{b}) \geq \True \implies g(\tuple{b}) \in \{ \True, \Neither \},
\end{align*}
  therefore $g$ is also positive.

  To prove that there is a harmonious De~Morgan function with the same falsity conditions as $f$, consider the function $h(\tuple{x}) \assign \dmneg f(\dmneg \tuple{x})$. Then there is a harmonious function $g$ with the same truth conditions as $h$, and $i(\tuple{x}) \assign \dmneg g(\dmneg \tuple{x})$ is a De~Morgan function with the same falsity conditions as~$f$. Moreover, if $f$ is positive, then so is $h$, and therefore also $g$ and $i$.
\end{proof}

\begin{theorem}[The positive clone and the persistent clone] \label{positive clone and persistent clone}
\theoremstartswithitemize
\begin{enumerate}[\rm(i)]
\item $\clonegen{\BiLatClone, \dmneg, \dual}$ is the clone of all De~Morgan functions.
\item $\clonegen{\BiLatClone, \dual}$ is the clone of all positive De Morgan functions.
\item $\clonegen{\BiLatClone, \dmneg}$ is the clone of all persistent De Morgan functions.
\end{enumerate}
\end{theorem}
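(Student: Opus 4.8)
The plan is to represent an arbitrary De~Morgan function as an explicit combination, built from operations of $\BiLatClone$, of two harmonious functions, and then to invoke the description of the harmonious clones from Theorem~\ref{harmonious clones}; part~(iii) will then be obtained from part~(ii) by symmetry. The easy (left-to-right) inclusions for~(i) and~(ii) come first. For~(i) it is trivial, since every De~Morgan function belongs to the clone of all De~Morgan functions. For~(ii) I would check that each generator of $\clonegen{\BiLatClone, \dual}$ is positive: the functions in $\BiLatClone$ are positive by Theorem~\ref{positive persistent clones}(i), and $\dual$ is an order-automorphism of the truth lattice; since positivity is preserved under composition, the whole clone consists of positive functions.

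For the forward direction I would fix a De~Morgan function $f$ and apply the Harmonization Lemma (Lemma~\ref{harmonization lemma}) to obtain a harmonious function $f_{T}$ with the same truth conditions as $f$ and a harmonious function $f_{F}$ with the same falsity conditions as $f$. The Combination Lemma (Lemma~\ref{combination lemma}) then yields the closed expression
\begin{align*}
  f(\tuple{x}) & = (\Neither \vee f_{T}(\tuple{x})) \wedge (\Both \vee f_{F}(\tuple{x})),
\end{align*}
because the right-hand side is the unique De~Morgan function whose truth conditions are those of $f_{T}$ (hence of $f$) and whose falsity conditions are those of $f_{F}$ (hence of $f$), which is $f$ itself by the Truth and Falsity Lemma (Lemma~\ref{truth and falsity lemma}). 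This expression uses only $\wedge$, $\vee$, $\Neither$, $\Both$ together with $f_{T}$ and $f_{F}$, so it remains only to place the two harmonious functions in the right clone. For~(i) Theorem~\ref{harmonious clones}(i) gives $f_{T}, f_{F} \in \clonegen{\DLatClone, \dmneg, \dual}$, whence $f \in \clonegen{\BiLatClone, \dmneg, \dual}$. For~(ii), when $f$ is positive the Harmonization Lemma additionally guarantees that $f_{T}$ and $f_{F}$ are positive, so Theorem~\ref{harmonious clones}(ii) places them in $\clonegen{\DLatClone, \dual}$ and hence $f \in \clonegen{\BiLatClone, \dual}$.

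For part~(iii) I would not repeat the argument but instead derive it from part~(ii) via the truth--information symmetry. Conjugating by the underlying isomorphism $\phi$ (the bijection sending $\True, \False, \Both, \Neither$ to $\Both, \Neither, \True, \False$) carries the truth order onto the information order, hence maps positive functions bijectively onto persistent functions; it fixes $\BiLatClone$ setwise, since the basic bilattice operations are merely permuted among themselves; and it interchanges $\dmneg$ and $\dual$. Applying this automorphism of the clone lattice to the equality ``$\clonegen{\BiLatClone, \dual}$ is the clone of positive functions'' therefore produces exactly ``$\clonegen{\BiLatClone, \dmneg}$ is the clone of persistent functions'', which is~(iii).

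The main obstacle is not any single computation but the bookkeeping needed to make the harmonization step align with the available description of the harmonious clones: the Harmonization Lemma is stated so as to preserve positivity but not persistence, which is precisely why part~(iii) must be obtained by symmetry rather than by a direct repetition of the argument for~(ii). A secondary point requiring care is verifying that $\phi$ genuinely is an order-isomorphism from the truth order to the information order and that it interchanges the two involutions exactly as the stated symmetry asserts, so that the conjugation argument transfers~(ii) to~(iii) faithfully.
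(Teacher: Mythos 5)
Your proposal is correct and takes essentially the same route as the paper's own proof: both reduce (i) and (ii) to Theorem~\ref{harmonious clones} via the Harmonization Lemma (Lemma~\ref{harmonization lemma}) and the Combination Lemma (Lemma~\ref{combination lemma}), and both obtain (iii) from (ii) by truth--information symmetry. The only difference is that you spell out the explicit combination formula and the conjugation details of the symmetry, which the paper leaves implicit.
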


\begin{proof}
  The left-to-right inclusions are again straightforward.

  (i, ii) Consider a De~Morgan function~$f$. By the Combination Lemma (Lemma~\ref{combination lemma}) it suffices to find a De~Morgan function ${g \in \clonegen{\DMAClone, \dual}}$ with the same truth conditions as $f$ and a De~Morgan function ${h \in \clonegen{\DMAClone, \dual}}$ wih the same falsity conditions as $f$. Thanks to the Harmonization lemma (Lemma~\ref{harmonization lemma}) there are harmonious functions $g$ and $h$ with these properties, and by Theorem~\ref{harmonious clones} each harmonious function belongs to $\clonegen{\DMAClone, \dual}$. If $f$ is positive, then by the Harmonization Lemma we can take $g$ and $h$ to be positive, therefore by Theorem~\ref{harmonious clones} they belong to $\clonegen{\DLatClone, \dual}$.

  (iii) This claim follows from (ii) by truth--information symmetry.
\end{proof}

  The theorems proved so far in particular show that the De Morgan clones defined by some conjunction of harmonicity, persistence, and positivity form precisely the lattice shown in Figure~\ref{fig:lattice clones}.

\begin{figure}
\begin{center}
\begin{tikzpicture}[scale=2.4]
  \node (dlat) at (0,0) {$\DLatClone$};
  \node (dual) at (-1,1) {$\clonegen{\DLatClone, \dual}$};
  \node (dmneg) at (1,1) {$\clonegen{\DLatClone, \dmneg}$};
  \node (bilat) at (0,1) {$\BiLatClone$};
  \node (bilatdual) at (-1,2) {$\clonegen{\BiLatClone, \dual}$};
  \node (bilatdmneg) at (1,2) {$\clonegen{\BiLatClone, \dmneg}$};
  \node (dualdmneg) at (0,2) {$\clonegen{\BiLatClone, \dual, \dmneg}$};
  \node (top) at (0,3) {$\clonegen{\BiLatClone, \dual, \dmneg}$};
  \draw[-] (dlat) -- (dual) -- (bilatdual) -- (top);
  \draw[-] (dlat) -- (dmneg) -- (bilatdmneg) -- (top);
  \draw[-] (dlat) -- (bilat) -- (bilatdual);
  \draw[-] (dlat) -- (bilat) -- (bilatdmneg);
  \draw[-] (dual) -- (dualdmneg) -- (top);
  \draw[-] (dmneg) -- (dualdmneg);
\end{tikzpicture}
\end{center}
\caption{De Morgan clones defined by some conjunction of harmonicity, persistence, and positivity}
\label{fig:lattice clones}
\end{figure}
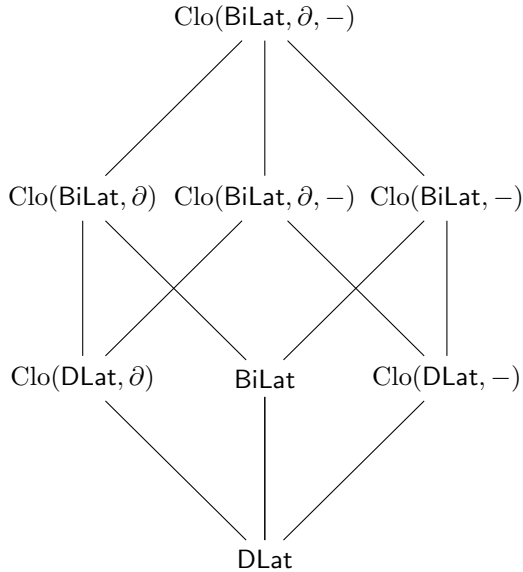

  We now turn to (possibly positive or persistent) clones which preserve $\Btwo$, $\Kthree$, or~$\Pthree$. Here we again use the strategy of describing the minimal functions in the appropriate clones such that $f(\tuple{a})$ is some prescribed value.

\begin{theorem}[Positive clones] \label{positive clones} \label{thm:positive clones}
  A positive De Morgan function lies in
\begin{enumerate}[\rm(i)]
\item $\clonegen{\DLatClone, \Delta, \dual}$ iff it preserves $\Btwo$,
\item $\clonegen{\DLatClone, \Delta, \nabla, \Neither}$ iff it preserves $\Kthree$,
\item $\clonegen{\DLatClone, \Delta, \nabla, \Both}$ iff it preserves $\Pthree$,
\item $\clonegen{\DLatClone, \Delta, \idbton}$ iff it preserves $\Btwo$ and $\Kthree$,
\item $\clonegen{\DLatClone, \Delta, \idntob}$ iff it preserves $\Btwo$ and $\Pthree$,
\item $\clonegen{\DLatClone, \Delta, \nabla}$ iff it preserves $\Btwo$, $\Kthree$, and $\Pthree$.
\end{enumerate}
\end{theorem}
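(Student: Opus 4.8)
The plan is to follow the template announced just before Theorem~\ref{harmonious clones}: for each tuple $\tuple{a}$ and each value $v\in\{\True,\Neither,\Both\}$ I would exhibit the pointwise $\leq$-least \emph{positive} function $f_{\tuple{a},v}$ \emph{within the relevant preservation class} satisfying $f_{\tuple{a},v}(\tuple{a})=v$, show that it lies in the clone in question, and then recover an arbitrary positive $f$ (preserving the stated sets) as the join $\bigvee_{v}\bigvee_{f(\tuple{a})=v}f_{\tuple{a},v}$, the last step being the one we agreed not to spell out. As in the harmonious case, the $\True$-valued minimal function is simply the Boolean indicator of the up-set $\{\tuple{x}\geq\tuple{a}\}$, namely $\bigwedge_{a_{i}=\True}\Box x_{i}\wedge\bigwedge_{a_{i}=\Neither}\nabla x_{i}\wedge\bigwedge_{a_{i}=\Both}\Delta x_{i}$; since $\Box=\Delta\wedge\nabla$ and $\nabla=\dual\Delta\dual$, this term is available in the clones of (i)--(iii) and (vi). The left-to-right inclusions are routine: each generator is positive and preserves the advertised sets (note that $\dual$ preserves $\Btwo$ but neither $\Kthree$ nor $\Pthree$, that $\Neither$ preserves $\Kthree$ but not $\Btwo$, and that $\idbton$ preserves $\Btwo$ and $\Kthree$ but not $\Pthree$), and these properties pass to compositions.

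For the non-Boolean values I would pull $f_{\tuple{a},\True}$ down by an extra conjunction. Writing $u\assign f_{\tuple{a},\True}$, on the up-set $\{\tuple{x}\geq\tuple{a}\}$ each coordinate $x_{i}$ with $a_{i}=\Neither$ ranges over $\{\Neither,\True\}$, so $u\wedge\bigwedge_{a_{i}=\Neither}x_{i}$ equals $\True$ exactly where all such $x_{i}=\True$ and $\Neither$ otherwise; this yields the $\Neither$-valued minimal function of clone (vi), the proviso $\Neither\in\tuple{a}$ guaranteeing a nonempty conjunction (and matching the fact that a $\Pthree$- and $\Btwo$-preserving function can output $\Neither$ only when $\Neither$ occurs in the input). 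For clone (i) I would instead use $u\wedge\bigwedge_{a_{i}=\Neither}x_{i}\wedge\bigwedge_{a_{i}=\Both}\dual x_{i}$, the factor $\dual x_{i}$ turning a coordinate $\Both$ into $\Neither$; here the proviso is only $\tuple{a}\nsubseteq\Btwo$. For clone (ii) the constant $\Neither$ is available, so $f_{\tuple{a},\Neither}=u\wedge\Neither$ works with no proviso, while $f_{\tuple{a},\Both}=u\wedge\bigwedge_{a_{i}=\Both}x_{i}$ needs $\Both\in\tuple{a}$.

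Conflation symmetry then does most of the remaining bookkeeping. Since $\dual$ interchanges $\Kthree$ with $\Pthree$, $\Delta$ with $\nabla$, $\Neither$ with $\Both$, and $\idbton$ with $\idntob$, item (iii) follows from (ii) and item (v) from (iv) by passing to conflation duals; moreover, whenever the generating set is closed under conflation duality (as for $\{\Delta,\nabla\}$ in (vi) and $\{\Delta,\dual\}$ in (i)) the $\Both$-valued minimal function $f_{\tuple{a},\Both}$ is just the conflation dual of $f_{\dual\tuple{a},\Neither}$ and so automatically lies in the clone.

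The main obstacle I anticipate is cases (iv) and (v), where the generating set is leaner---only $\Delta$ together with $\idbton$ (respectively $\idntob$)---and one no longer has direct access to $\nabla$ (respectively $\Delta$). Here $\Box=\Delta\idbton$ is still available, but already the $\True$-valued minimal function for a tuple carrying a $\Neither$ appears to call for $\nabla$, which is not visibly expressible from $\Delta$ and $\idbton$ alone. So the real work will be to pin down the exact shape of the minimal positive functions preserving $\Btwo$ and $\Kthree$ and to find (possibly nonobvious) terms for them in $\clonegen{\DLatClone,\Delta,\idbton}$, exploiting that $\idbton$ collapses $\Both$ to $\Neither$; the $\Both$-valued case and item (v) would then follow by conflation duality. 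I expect this expressibility check, rather than the description of the minimal functions or the final join, to be the crux of the proof.
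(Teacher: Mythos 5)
Your treatment of (i), (ii), (iii) and (vi) is essentially the paper's own proof: the same minimal functions $f_{\tuple{a},v}$, the same terms (the paper writes the up-set indicator as $\bigwedge_{a_i\in\{\True,\Both\}}\Delta x_i\wedge\bigwedge_{a_i\in\{\True,\Neither\}}\nabla x_i$, which coincides with your $\bigwedge_{a_i=\True}\Box x_i\wedge\bigwedge_{a_i=\Neither}\nabla x_i\wedge\bigwedge_{a_i=\Both}\Delta x_i$), and the same final join. But the obstacle you flag at (iv)/(v) is not something a cleverer term can overcome: $\nabla$ is provably \emph{not} in $\clonegen{\DLatClone,\Delta,\idbton}$. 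Consider the binary relation $R \assign \{\pair{\True}{\True}\}\cup(\{\Neither,\False\}\times\DMfour)$. Every generator preserves $R$: the constant pairs $\pair{\True}{\True}$ and $\pair{\False}{\False}$ lie in $R$; $\Delta$ and $\idbton$ fix $\True$ and map $\{\Neither,\False\}$ into $\{\Neither,\False\}$; and for $\wedge,\vee$, if both first components lie in $\{\Neither,\False\}$ then so does their meet/join, while if one pair is $\pair{\True}{\True}$ the result is either $\pair{\True}{\True}$ or the other pair. Since $\pair{\Neither}{\Both}\in R$ but $\pair{\nabla\Neither}{\nabla\Both}=\pair{\True}{\False}\notin R$, we get $\nabla\notin\clonegen{\DLatClone,\Delta,\idbton}$. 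Yet $\nabla$ is positive and preserves $\Btwo$ and $\Kthree$ (its range is $\Btwo$), so item (iv) as printed is false, and (v) fails dually. Indeed the printed statement is internally inconsistent: the class of functions in (vi) is contained in the class in (iv), yet the generator $\nabla$ of the clone in (vi) does not lie in the clone in (iv).

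So your instinct was right, and in fact you have located a defect in the paper itself: its proof of (iv) says to ``replace the conjuncts $\dual x_i$ by $\idbton x_i$'' in the expression for $f_{\tuple{a},\Neither}$, but that expression also contains the factor $f_{\tuple{a},\True}$, whose conjuncts $\nabla x_i$ (one for each $a_i=\Neither$) are exactly what cannot be produced. The statement should read $\clonegen{\DLatClone,\Delta,\nabla,\idbton}$ in (iv) and $\clonegen{\DLatClone,\Delta,\nabla,\idntob}$ in (v). With $\nabla$ added, your plan (and the paper's) completes: $f_{\tuple{a},\True}$ is the up-set indicator, $f_{\tuple{a},\Both}$ is as in (ii), and $f_{\tuple{a},\Neither}$ is the formula from (i) with $\idbton x_i$ in place of $\dual x_i$, which is harmless since $\idbton$ and $\dual$ agree on $\{\Both,\True\}$, the only values $x_i$ takes on the up-set of $\tuple{a}$ when $a_i=\Both$. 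One further point: with the printed generating sets your derivation of (v) from (iv) by conflation duality is also invalid, since the conflation dual of $\{\Delta,\idbton\}$ is $\{\nabla,\idntob\}$, not $\{\Delta,\idntob\}$ (as your own list of dual pairs shows); after adding $\nabla$ this step becomes legitimate, because $\{\Delta,\nabla\}$ is self-dual.
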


\begin{proof}
  (i) There is a smallest positive function $f_{\tuple{a}, \True}$ with $f_{\tuple{a}, \True}(\tuple{a}) = \True$, namely
\begin{align*}
  f_{\tuple{a}, \True}(\tuple{x}) = \True & \text{ if } \tuple{a} \leq \tuple{x}, \\
  f_{\tuple{a}, \True}(\tuple{x}) = \False & \text{ otherwise}.
\end{align*}
  Moreover, this function preserves $\Btwo$, $\Kthree$, $\Pthree$. Recall that $\nabla x = \Delta \dual x$. Let us define the function $g$ as
\begin{align*}
  g(\tuple{x}) = \smashoperator{\bigwedge_{a_{i} \in \{ \True, \Both \}}} \Delta x_{i} ~ \wedge ~ \smashoperator{\bigwedge_{a_{i} \in \{ \True, \Neither \}}} \nabla x_{i}.
\end{align*}
  Clearly $g$ is a positive function such that $g(\tuple{a}) = \True$, hence $g(\tuple{x}) = \True$ if $\tuple{x} \geq \tuple{a}$. If $\tuple{x} \ngeq \tuple{a}$, then there is some $x_{i}$ such that $x_{i} \ngeq a_{i}$, therefore either $a_{i} \in \{ \True, \Both \}$ and $x_{i} \notin \{ \True, \Both \}$ or $a_{i} \in \{ \True, \Neither \}$ and $x_{i} \notin \{ \True, \Neither \}$. But then $g(\tuple{x}) = \False$ thanks for the conjuncts $\Delta x_{i}$ and $\Delta \dual x_{i}$. Thus $g(\tuple{x}) = f_{\tuple{a}, \True}(\tuple{x})$.

  There is also a smallest positive $\Btwo$-preserving function $f_{\tuple{a}, \Neither}$ such that $f_{\tuple{a}, \Neither}(\tuple{a}) = \Neither$ and $f_{\tuple{a}, \Both}(\tuple{a}) = \Both$, provided that $\tuple{a} \nsubseteq \{ \True, \False \}$, namely
\begin{align*}
  f_{\tuple{a}, \Neither}(\tuple{x}) = \Neither & \text{ if } \tuple{x} \geq \tuple{a} \text{ and } x_{i} = a_{i} \text{ for some } a_{i} \in \{ \Neither, \Both \}, \\
  f_{\tuple{a}, \Neither}(\tuple{x}) = \True & \text{ if } \tuple{x} \geq \tuple{a} \text{ and } x_{i} = \True, \text{ for each } a_{i} \in \{ \Neither, \Both \}, \\
  f_{\tuple{a}, \Neither}(\tuple{x}) = \False & \text{ otherwise}.
\end{align*}
  Let us define the function $g$ as
\begin{align*}
  g(\tuple{x}) = f_{\tuple{a}, \True}(\tuple{x}) ~ \wedge ~ \smashoperator{\bigwedge_{a_{i} = \Neither}} x_{i} ~ \wedge ~ \smashoperator{\bigwedge_{a_{i} = \Both}} \dual x_{i}.
\end{align*}
  Clearly $g$ is a positive $\Btwo$-preserving function such that $g(\tuple{a}) = \Neither$ (using the assumption that $\tuple{a} \nsubseteq \{ \True, \False \}$), hence $g(\tuple{x}) \geq \Neither$ if $\tuple{x} \geq \tuple{a}$ and $g(\tuple{x}) = \True$ if $\tuple{x} \geq \tuple{a}$ and $x_{i} = a_{i}$ for each $a_{i} \in \{ \Neither, \Both \}$. Moreover, $f_{\tuple{a}, \Neither}(\tuple{x}) \leq f_{\tuple{a}, \True}(\tuple{x}) = \False$ if $\tuple{a} \nleq \tuple{x}$. Finally, suppose that $\tuple{a} \leq \tuple{x}$ and $x_{i} = a_{i}$ for some $a_{i} \in \{ \Neither, \Both \}$. Then $g(\tuple{x}) = \Neither$ thanks to the conjuncts $x_{i}$ and $\dual x_{i}$.

  The smallest positive $\Btwo$-preserving function $f_{\tuple{a}, \Both}(\tuple{x})$ such that $f_{\tuple{a}, \Both}(\tuple{a}) = \Both$ can be expressed similarly by conflation symmetry.

   (ii) The smallest positive $\Kthree$-preserving function $f_{\tuple{a}, \True}$ such that $f_{\tuple{a}, \True}(\tuple{a}) = \True$ is the same as the smallest positive function with this property. The smallest positive $\Kthree$-preserving function $f_{\tuple{a}, \Neither}$ such that $f_{\tuple{a}, \Neither}(\tuple{a}) = \Neither$ is
\begin{align*}
  f_{\tuple{a}, \Neither}(\tuple{x}) = \Neither & \text{ if } \tuple{a} \leq \tuple{x}, \\
  f_{\tuple{a}, \Neither}(\tuple{x}) = \False & \text{ otherwise}.
\end{align*}
  But this is simply the function $f_{\tuple{a}, \True}(\tuple{x}) \wedge \Neither$.

  Provided that $\Both \in \tuple{a}$, the smallest positive $\Kthree$-preserving function $f_{\tuple{a}, \Both}$ such that $f_{\tuple{a}, \Both}(\tuple{a}) = \Both$ is
\begin{align*}
  f_{\tuple{a}, \Both}(\tuple{x}) = \Both & \text { if } \tuple{a} \leq \tuple{x} \text{ and } x_{i} = \Both \text{ for some } a_{i} = \Both, \\
  f_{\tuple{a}, \Both}(\tuple{x}) = \True & \text { if } \tuple{a} \leq \tuple{x} \text{ but } x_{i} = \True \text{ for each } a_{i} = \Both, \\
  f_{\tuple{a}, \Both}(\tuple{x}) = \False & \text{ otherwise}.
\end{align*}
  But this is simply the conjunction of $f_{\tuple{a}, \True}(\tuple{x})$ with each $x_{i}$ such that $a_{i} = \Both$.

  (iv) The smallest positive function $f_{\tuple{a}, \True}$ preserving $\Btwo$ and $\Kthree$ such that $f_{\tuple{a}, \True}(\tuple{a}) = \True$ is the same as the smallest positive function with this property. The smallest positive function $f_{\tuple{a}, \Both}$ preserving $\Btwo$ and $\Kthree$ such that $f_{\tuple{a}, \Both}(\tuple{a}) = \Both$ is the same as the smallest $\Kthree$-preserving function with this property. Finally, the smallest function $f_{\tuple{a}, \Neither}$ preserving $\Btwo$ and $\Kthree$ such that $f_{\tuple{a}, \Neither}(\tuple{a}) = \Neither$ is the same as the smallest $\Btwo$-preserving function with this property. To express this function in terms of $\clonegen{\DLatClone, \Delta, \idbton}$, it suffices to replace the conjuncts $\dual x_{i}$ by $\idbton x_{i}$ in the expression of the smallest $\Btwo$-preserving function with this property.

   (iii, v) These follow from (ii, iv) by conflation symmetry.

  (vi) The smallest positive function $f_{\tuple{a}, \True}$ preserving $\Btwo$, $\Kthree$, $\Pthree$ such that $f_{\tuple{a}, \True}(\tuple{a}) = \True$ is the same as the smallest positive function with this property. The smallest positive function $f_{\tuple{a}, \Both}$ preserving $\Btwo$, $\Kthree$, $\Pthree$ such that $f_{\tuple{a}, \Both}(\tuple{a}) = \Both$ is the same as the smallest $\Kthree$-preserving function with this property, provided that $\Both \in \tuple{a}$. The smallest positive function $f_{\tuple{a}, \Neither}$ preserving $\Btwo$, $\Kthree$, $\Pthree$ such that $f_{\tuple{a}, \Neither}(\tuple{a}) = \Neither$ is the same as the smallest $\Pthree$-preserving function with this property, and by conflation symmetry this is simply the conjunction of $f_{\tuple{a}, \True}(\tuple{x})$ with each $x_{i}$ such that $a_{i} = \Neither$.
\end{proof}

\begin{theorem}[Clones preserving subalgebras] \label{clones preserving subalgebras}
  A De~Morgan function lies in
\begin{enumerate}[\rm(i)]
\item $\clonegen{\DMAClone, \Delta}$ iff it preserves $\Btwo$, $\Kthree$, and $\Pthree$.
\item $\clonegen{\DMAClone, \Delta, \Neither}$ iff it preserves $\Kthree$.
\item $\clonegen{\DMAClone, \Delta, \Both}$ iff it preserves $\Pthree$.
\item $\clonegen{\DMAClone, \Delta, \dual}$ iff it preserves $\Btwo$.
\item $\clonegen{\DMAClone, \Delta, \idbton}$ iff it preserves $\Btwo$ and $\Kthree$.
\item $\clonegen{\DMAClone, \Delta, \idntob}$ iff it preserves $\Btwo$ and $\Pthree$.
\end{enumerate}
\end{theorem}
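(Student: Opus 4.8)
The plan is to reuse the minimal-function template of the previous theorems. For each of the six clones I would, for every tuple $\tuple{a}$ and every value $v \in \{\True, \Neither, \Both\}$, identify the smallest function in the relevant class of subalgebra-preserving functions whose value at $\tuple{a}$ is $v$, express it in the given generating set, and recover an arbitrary $f$ as the join $\bigvee_{v} \bigvee_{f(\tuple{a}) = v} f_{\tuple{a}, v}$. Since no positivity or harmonicity is now imposed, these minimal functions are simply the point indicators $f_{\tuple{a}, v}$, equal to $v$ at $\tuple{a}$ and to $\False$ elsewhere, so the join identity is just the ordinary disjunctive decomposition and holds for every De~Morgan function. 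The whole problem therefore reduces to showing that each point indicator which preserves the subalgebras of the case at hand already belongs to the corresponding clone. The left-to-right inclusions are the routine verification that the listed generators preserve the stated subalgebras: $\wedge, \vee, \True, \False, \dmneg, \Delta$ preserve all of $\Btwo$, $\Kthree$, $\Pthree$, while $\Neither$ preserves only $\Kthree$, $\Both$ only $\Pthree$, $\dual$ only $\Btwo$, $\idbton$ only $\Btwo$ and $\Kthree$, and $\idntob$ only $\Btwo$ and $\Pthree$.

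First I would build the equality test $f_{\tuple{a}, \True}$, the function that is $\True$ exactly at $\tuple{a}$. Using $\Box x = \Delta x \wedge \nabla x$ and $\nabla x = \dmneg \Delta \dmneg x$, the four component tests detecting $x_i = \True$, $x_i = \False$, $x_i = \Neither$, $x_i = \Both$ are $\Box x_i$, $\Box \dmneg x_i$, $\nabla x_i \wedge \dmneg \Delta x_i$, and $\Delta x_i \wedge \dmneg \nabla x_i$; conjoining the appropriate ones over all $i$ expresses $f_{\tuple{a}, \True}$ inside $\clonegen{\DMAClone, \Delta}$, hence inside every one of the six clones. It then remains only to obtain $f_{\tuple{a}, \Neither}$ and $f_{\tuple{a}, \Both}$, and for this it suffices to conjoin $f_{\tuple{a}, \True}$ with a term $t(\tuple{x})$ whose value at $\tuple{a}$ is $\Neither$ (respectively $\Both$), because $f_{\tuple{a}, \True} \wedge t$ drops to $\False$ away from $\tuple{a}$ and equals $t(\tuple{a})$ at $\tuple{a}$.

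The crux, and the step I expect to be the main obstacle, is producing the middle values $\Neither$ and $\Both$ using only the generators permitted in each case; this is exactly what forces the extra generator in the statement. The bookkeeping is governed by which pairs $(\tuple{a}, v)$ can occur: because $f$ preserves the relevant subalgebras, $f(\tuple{a}) = \Neither$ forces $\tuple{a}$ outside every preserved subalgebra containing $\Neither$, and dually for $\Both$. For the clone preserving all of $\Btwo, \Kthree, \Pthree$ this yields $\Neither \in \tuple{a}$ when $v = \Neither$ and $\Both \in \tuple{a}$ when $v = \Both$, so one may take $t = x_i$ with $a_i = v$. When only $\Kthree$ is preserved the value $\Neither$ may occur even at a Boolean $\tuple{a}$, and here the constant $\Neither$ serves as $t$; the $\Pthree$ case is symmetric with $\Both$. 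When only $\Btwo$ is preserved, $v = \Neither$ forces some $a_i \in \{\Neither, \Both\}$, so $t = x_i$ or $t = \dual x_i$ works; replacing $\dual$ by $\idbton$ handles the clone preserving $\Btwo$ and $\Kthree$, and conflation symmetry then yields the $\Btwo, \Pthree$ case with $\idntob$ and reduces (iii) to (ii) and (vi) to (v). In every case the term $t$ is available in the prescribed clone, so $f_{\tuple{a}, \Neither}$ and $f_{\tuple{a}, \Both}$ lie in it, which completes the converse.
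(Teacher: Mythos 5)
Your proposal is correct and follows the paper's own proof essentially step for step: the same decomposition of $f$ into point indicators $f_{\tuple{a},v}$ (the minimal functions, since no positivity or harmonicity is imposed), the same expression of $f_{\tuple{a},\True}$ by conjoining componentwise tests built from $\Delta$ and $\nabla x = \dmneg\Delta\dmneg x$, and the same device of conjoining $f_{\tuple{a},\True}$ with $x_{i}$, $\dual x_{i}$, $\idbton x_{i}$, $\idntob x_{i}$, or a constant, with the preservation hypotheses guaranteeing that the needed coordinate exists. (One slip of wording only: ``forces $\tuple{a}$ outside every preserved subalgebra \emph{containing} $\Neither$'' should read ``\emph{not containing} $\Neither$''; the concrete conclusions you draw from it, such as $\Neither \in \tuple{a}$ when $v = \Neither$ in case (i), are the correct ones.)
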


\begin{proof}
  Let us define the functions $f_{\tuple{a}, b}$ for $b \in \{ \True, \Neither, \Both \}$ as
\settowidth{\auxlength}{$\Both$}
\begin{align*}
  f_{\tuple{a}, \True}(\tuple{x}) = \hbox to \auxlength{\hfil$\True$\hfil} & \text{ if } \tuple{x} = \tuple{a}, & f_{\tuple{a}, \True}(\tuple{x}) = \False & \text{ otherwise}, \\
  f_{\tuple{a}, \Neither}(\tuple{x}) = \Neither & \text{ if } \tuple{x} = \tuple{a}, & f_{\tuple{a}, \Neither}(\tuple{x}) = \False & \text{ otherwise}, \\
  f_{\tuple{a}, \Both}(\tuple{x}) = \Both & \text{ if } \tuple{x} = \tuple{a}, & f_{\tuple{a}, \Both}(\tuple{x}) = \False & \text{ otherwise}.
\end{align*}
  These are the smallest De~Morgan functions $f$ such that $f(\tuple{a}) = \True$, $f(\tuple{a}) = \Neither$, and $f(\tuple{a}) = \Both$, respectively.

  The function $f_{\tuple{a}, \True}$ can be expressed as
\begin{align*}
  f_{\tuple{a}, \True}(\tuple{x}) = \smashoperator{\bigwedge_{a_{i} \in \{ \True, \Both \}}} \Delta x_{i} ~ \wedge ~ \smashoperator{\bigwedge_{a_{i} \in \{ \True, \Neither \}}} \nabla x_{i} ~ \wedge ~ \smashoperator{\bigwedge_{a_{i} \notin \{ \True, \Both \}}} \dmneg \Delta x_{i} ~ \wedge ~ \smashoperator{\bigwedge_{a_{i} \notin \{ \True, \Neither \}}} \dmneg \nabla x_{i}.
\end{align*}
  The functions $f_{\tuple{a}, \Neither}$ and $f_{\tuple{a}, \Both}$ can be expressed as $f_{\tuple{a}, \Neither}(\tuple{x}) = f_{\tuple{a}, \True}(\tuple{x}) \wedge \Neither$ and $f_{\tuple{a}, \Both}(\tuple{x}) = f_{\tuple{a}, \True}(\tuple{x}) \wedge \Both$. If $a_{i} = \Neither$, then $f_{\tuple{a}, \Neither}(\tuple{x}) = f_{\tuple{a}, \True}(\tuple{x}) \wedge x_{i}$. Likewise, if $a_{i} = \Both$, then $f_{\tuple{a}, \Both}(\tuple{x}) = f_{\tuple{a}, \True}(\tuple{x}) \wedge x_{i}$. This proves (i)--(iii).

  If $a_{i} = \Both$, then $f_{\tuple{a}, \Neither}(\tuple{x}) = f_{\tuple{a}, \True}(\tuple{x}) \wedge \dual x_{i} = f_{\tuple{a}, \True}(\tuple{x}) \wedge \idbton x_{i}$. Likewise, if $a_{i} = \Neither$, then $f_{\tuple{a}, \Both}(\tuple{x}) = f_{\tuple{a}, \True}(\tuple{x}) \wedge \dual x_{i} = f_{\tuple{a}, \True}(\tuple{x}) \wedge \idntob x_{i}$. This proves (iv)--(vi).
\end{proof}

  In all of the cases considered above, the clones were always obtained by adding some at most unary functions to the clones $\DLatClone$ or $\BiLatClone$. Moreover, with the possible exception of the clones of all positive De~Morgan functions preserving $\Btwo$ and either $\Kthree$ or $\Pthree$, these functions were very natural ones: $\dmneg, \dual, \Delta, \nabla, \Neither, \Both$. In~the last case that we consider in this section, namely the case of persistent functions preserving $\Btwo$, $\Kthree$, or $\Pthree$, the situation will be somewhat less satisfying.

  In order to describe these clones, we introduce the two persistent binary functions $\pbp_{1}, \pbp_{2}$ (``persistent Boolean-preserving'') shown in Figure~\ref{fig: persistent binary functions}. Observe that the functions $\Truebtob x$ and $\Truenton x$ can be expressed as
\begin{align*}
  \Truebtob x & = \pbp_{1} (x, \True), \\
  \Truenton x & = \pbp_{2}  (x, \True).
\end{align*}

\begin{figure}
\begin{center}
\begin{tabular}{r | c c c c }
  $\pbp_{1}$ & $\True$ & $\False$ & $\Neither$ & $\Both$ \\
  \hline
  $\True$ & $\True$ & $\False$ & $\False$ & $\Both$ \\
  $\False$ & $\True$ & $\False$ & $\False$ & $\Both$ \\
  $\Neither$ & $\True$ & $\False$ & $\Neither$ & $\Both$ \\
  $\Both$ & $\Both$ & $\False$ & $\False$ & $\Both$
\end{tabular}
\qquad
\begin{tabular}{r | c c c c }
  $\pbp_{2}$ & $\True$ & $\False$ & $\Neither$ & $\Both$ \\
  \hline
  $\True$ & $\True$ & $\False$ & $\Neither$ & $\False$ \\
  $\False$ & $\True$ & $\False$ & $\Neither$ & $\False$ \\
  $\Neither$ & $\Neither$ & $\False$ & $\Neither$ & $\False$ \\
  $\Both$ & $\True$ & $\False$ & $\Neither$ & $\Both$
\end{tabular}
\end{center}
\caption{Binary persistent functions preserving $\Btwo$}
\label{fig: persistent binary functions}
\end{figure}

\begin{theorem}[Persistent clones] \label{persistent clones}
  A De Morgan function lies in
\begin{enumerate}[\rm(i)]
\item $\clonegen{\DMAClone, \pbp_{1}, \pbp_{2}}$ iff it is persistent and preserves $\Btwo$,
\item $\clonegen{\DMAClone, \pbp_{1}, \pbp_{2}, \Neither}$ iff it is persistent and preserves $\Kthree$,
\item $\clonegen{\DMAClone, \pbp_{1}, \pbp_{2}, \Both}$ iff it is persistent and preserves $\Pthree$.
\end{enumerate}
\end{theorem}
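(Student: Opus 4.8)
The left-to-right inclusions are routine: reading off the tables in Figure~\ref{fig: persistent binary functions} one sees that $\pbp_{1}$ and $\pbp_{2}$ are persistent and preserve $\Btwo$, the generators of $\DMAClone$ share these properties, and both persistence and preservation of $\Btwo$ (as well as of $\Kthree$ and $\Pthree$) are inherited under composition; the constants $\Neither$ and $\Both$ preserve $\Kthree$ and $\Pthree$ respectively, which accounts for items~(ii) and~(iii). For the converse I would follow the minimal-function template recalled before Theorem~\ref{harmonious clones}. Given a persistent $\Btwo$-preserving $f$, for each tuple $\tuple{a}$ and each $b \in \{\Neither, \Both, \True\}$ I isolate the truth-order-smallest persistent $\Btwo$-preserving function $f_{\tuple{a}, b}$ with $f_{\tuple{a}, b}(\tuple{a}) = b$, and then express $f$ as the join $\bigvee_{f(\tuple{a}) = \Neither} f_{\tuple{a}, \Neither} \vee \bigvee_{f(\tuple{a}) = \Both} f_{\tuple{a}, \Both} \vee \bigvee_{f(\tuple{a}) = \True} f_{\tuple{a}, \True}$. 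These smallest functions exist because the class is closed under the truth meet (the truth-lattice operations, being bilattice operations, are persistent and preserve $\Btwo$); and the join reconstructs $f$ by the usual argument, since every summand is $\leq f$ while at each point the appropriate summand attains the value $f(\tuple{b})$.

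Two preliminary observations fix the range of the summation. First, $f(\tuple{a}) = \Neither$ forces $\Neither \in \tuple{a}$ and $f(\tuple{a}) = \Both$ forces $\Both \in \tuple{a}$: otherwise there is a Boolean tuple below (resp.\ above) $\tuple{a}$ in the information order, and persistence together with $\Btwo$-preservation would force a non-Boolean value on a Boolean input. Second, the functions $f_{\tuple{a}, \Neither}$ and $f_{\tuple{a}, \Both}$ are easy to express. Here $f_{\tuple{a}, \Neither}$ takes the value $\Neither$ on the information-downset of $\tuple{a}$ and $\False$ elsewhere, and I obtain it by post-composing the harmonious minimal function $f^{h}_{\tuple{a}, \Neither} \in \DMAClone$ of Theorem~\ref{harmonious clones}(iii) (whose range is $\{\Neither, \Both, \False\}$) with the unary map $x \mapsto \dmneg \Truenton x = \dmneg\,\pbp_{2}(x, \True)$, which fixes $\Neither$ and collapses $\Both$ to $\False$; symmetrically $f_{\tuple{a}, \Both} = \dmneg\,\pbp_{1}(f^{h}_{\tuple{a}, \Both}, \True)$. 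Since $\pbp_{2}$ is the conflation dual of $\pbp_{1}$, these two clauses are conflation-dual, as is the whole construction, so it suffices to treat one and dualize.

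It remains to handle $f_{\tuple{a}, \True}$, and this splits by the shape of $\tuple{a}$. When $\tuple{a}$ is Boolean the symmetry $\tuple{a} = \dual \tuple{a}$ makes $f_{\tuple{a}, \True}$ harmonious, so it coincides with $f^{h}_{\tuple{a}, \True} \in \DMAClone$; when both $\Neither, \Both \in \tuple{a}$ one checks directly that $f_{\tuple{a}, \True} = f_{\tuple{a}, \Neither} \vee f_{\tuple{a}, \Both}$, already available from the previous paragraph. The main obstacle is the value $\True$ on the remaining mixed tuples $\tuple{a}$, those carrying exactly one of $\Neither, \Both$ alongside Boolean entries. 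There $f_{\tuple{a}, \True}$ is genuinely non-harmonious and is not obtainable from the unary sections alone: its truth conditions and falsity conditions must be amalgamated in the manner of the Combination Lemma (Lemma~\ref{combination lemma}), via $(\Neither \vee g) \wedge (\Both \vee h)$ with $g, h \in \DMAClone$, but the constants $\Neither, \Both$ occurring there are not $\Btwo$-preserving and hence not in $\clonegen{\DMAClone, \pbp_{1}, \pbp_{2}}$. Carrying out this amalgamation inside the clone is precisely what the full binary functions $\pbp_{1}, \pbp_{2}$ are designed for, and the bookkeeping verification that every such $f_{\tuple{a}, \True}$ indeed lies in $\clonegen{\DMAClone, \pbp_{1}, \pbp_{2}}$ is the technical heart of the proof.

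Finally, item~(ii) is obtained by running the identical template with the constant $\Neither$ adjoined: this relaxes $\Btwo$-preservation to $\Kthree$-preservation by permitting the value $\Neither$ on Boolean inputs, so that the minimal functions $f_{\tuple{a}, \Neither}$ now exist for every $\tuple{a}$ and are supplied with the help of $\Neither$, while the production of $\Both$-values is still governed by $\pbp_{1}, \pbp_{2}$. Item~(iii) then follows from~(ii) by conflation symmetry, conflation mapping $\Kthree$ to $\Pthree$, the constant $\Neither$ to $\Both$, and the clone $\clonegen{\DMAClone, \pbp_{1}, \pbp_{2}}$ onto itself.
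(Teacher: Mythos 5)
Your skeleton is the same as the paper's: decompose $f$ as a truth-order join of minimal persistent $\Btwo$-preserving functions $f_{\tuple{a},b}$, express each of those in the clone, then get (ii) by adjoining $\Neither$ and (iii) by conflation symmetry. The pieces you do complete are correct, and some are handled more slickly than in the paper: your observations that $f(\tuple{a})=\Neither$ forces $\Neither\in\tuple{a}$, that $f_{\tuple{a},\Neither}$ and $f_{\tuple{a},\Both}$ arise by post-composing the harmonious minimal functions of Theorem~\ref{harmonious clones}(iii) with $\dmneg\Truenton x = \dmneg\,\pbp_{2}(x,\True)$ and $\dmneg\Truebtob x = \dmneg\,\pbp_{1}(x,\True)$, that $f_{\tuple{a},\True}=f^{h}_{\tuple{a},\True}$ for Boolean $\tuple{a}$, and that $f_{\tuple{a},\True}=f_{\tuple{a},\Neither}\vee f_{\tuple{a},\Both}$ when both $\Neither,\Both\in\tuple{a}$, all check out.

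However, there is a genuine gap, and it sits exactly where you place the phrase ``the technical heart of the proof'': the minimal functions $f_{\tuple{a},\True}$ for tuples containing exactly one of $\Neither,\Both$ alongside Boolean entries are never expressed in $\clonegen{\DMAClone, \pbp_{1}, \pbp_{2}}$, only asserted to be expressible. This cannot be waved off as bookkeeping, for a reason the paper itself makes precise in the remark following the theorem: every term you actually construct uses $\pbp_{1},\pbp_{2}$ only through the unary specializations $\Truebtob x=\pbp_{1}(x,\True)$ and $\Truenton x=\pbp_{2}(x,\True)$, so all your completed fragments lie in $\clonegen{\DMAClone,\Truenton,\Truebtob}$ --- and that clone is \emph{proper}: it fails to contain $\pbp_{1}$ and $\pbp_{2}$ (witnessed by explicit subsets of $\DMfour^{2}$ closed under it but not under $\pbp_{1}$, respectively $\pbp_{2}$). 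Consequently some of the mixed-case functions $f_{\tuple{a},\True}$ are provably \emph{not} obtainable by the kind of unary post-composition tricks used everywhere else in your argument; closing the case requires genuinely binary applications of $\pbp_{1},\pbp_{2}$ across pairs of coordinates. In the paper this is done by writing $f_{\tuple{a},\True}$ as a conjunction containing, among other conjuncts, $\pbp_{1}(x_{i},x_{j})$ for each pair $a_{i}=\Neither$, $a_{j}=\True$, $\pbp_{1}(x_{i},\dmneg x_{j})$ for $a_{i}=\Neither$, $a_{j}=\False$, and dually $\pbp_{2}(x_{i},x_{j})$, $\pbp_{2}(x_{i},\dmneg x_{j})$ for $a_{i}=\Both$, followed by a nontrivial case analysis verifying the equality. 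A related (smaller) omission occurs in your sketch of (ii): the minimal $\Kthree$-preserving persistent $f_{\tuple{a},\True}$ is not the same function as in (i) --- it must take the value $\Neither$ rather than $\False$ at certain points where $a_{i}=\Both$ and $x_{i}\in\Btwo$ --- and accommodating this requires additional conjuncts ($\Neither\vee x_{i}$, $\Neither\vee\dmneg x_{i}$), not just adjoining the constant $\Neither$ to supply $f_{\tuple{a},\Neither}$. Until these constructions are produced and verified, the right-to-left directions of (i)--(iii) remain unproven.
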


\begin{proof}
  (i) Recall that a persistent function which preserves $\Btwo$ also preserves $\Kthree$ and $\Pthree$. There is a smallest persistent $\Btwo$-preserving function $f_{\tuple{a}, \Neither}$ such that $f_{\tuple{a}, \Neither}(\tuple{a}) = \Neither$, provided that $\Neither \in \tuple{a}$, namely
\begin{align*}
  f_{\tuple{a}, \Neither}(\tuple{x}) & = \Neither \text { if } \tuple{x} \infleq \tuple{a}, \\
  f_{\tuple{a}, \Neither}(\tuple{x}) & = \False \text{ if } \tuple{x} \ninfleq \tuple{a}.
\end{align*}
  Let us define the function $g$ as
\begin{align*}
  g(\tuple{x}) = \smashoperator{\bigwedge_{a_{i} = \True}} x_{i} ~ \wedge ~ \smashoperator{\bigwedge_{a_{i} = \False}} \dmneg x_{i} ~ \wedge ~ \smashoperator{\bigwedge_{a_{i} = \Neither}} \dmneg \Truenton x_{i}.
\end{align*}
  We claim that $g(\tuple{x}) = f_{\tuple{a}, \Neither}(\tuple{x})$. The function $g$ is a persistent $\Btwo$-preserving function such that $g(\tuple{a}) = \Neither$, therefore $g(\tuple{x}) = \Neither$ if $\tuple{x} \infleq \tuple{a}$. If $\tuple{x} \ninfleq \tuple{a}$, then $x_{i} \ninfleq a_{i}$ for some $x_{i}$. It follows that $a_{i} \inflneq \Both$ and $x_{i} \infgneq \Neither$. If $a_{i} = \Neither$, then $g(\tuple{x}) = \False$ thanks to the conjunct $\dmneg \Truenton x_{i}$. We may therefore assume that $x_{i} = a_{i}$ whenever $a_{i} = \Neither$. If $a_{i} \in \{ \True, \False \}$ and $x_{i} \in \{ \True, \False \}$, then $x_{i} = \dmneg a_{i}$, hence $g(\tuple{x}) = \False$ thanks to the conjunct $x_{i}$ or $\dmneg x_{i}$. If $a_{i} \in \{ \True, \False \}$ and $x_{i} = \Both$ and $x_{i} = a_{i}$ whenever $a_{i} = \Neither$, then $g(\tuple{x}) \leq \Neither \wedge \Both = \False$ because $\Neither \in \tuple{a}$.

  By conflation symmetry, it follows that there is also a smallest persistent $\Btwo$-preserving function $f_{\tuple{a}, \Both}$ such that $f_{\tuple{a}, \Both}(\tuple{a}) = \Both$, and it also belongs to $\clonegen{\DMAClone, \Truenton, \Truebtob}$.

  Finally, there is also a smallest persistent $\Btwo$-preserving function $f_{\tuple{a}, \True}$ such that $f_{\tuple{a}, \True}(\tuple{a}) = \True$, namely
\begin{align*}
  f_{\tuple{a}, \True}(\tuple{x}) & = \False \text{ if } \tuple{x} \ninfleq \tuple{a} \text{ and } \tuple{x} \ninfgeq \tuple{a}, \\
  f_{\tuple{a}, \True}(\tuple{x}) & = \Neither \text{ if } \tuple{x} \inflneq \tuple{a} \text{ and } \Neither \in \tuple{x}, \\ 
  f_{\tuple{a}, \True}(\tuple{x}) & = \Both \text{ if } \tuple{x} \infgneq \tuple{a} \text{ and } \Both \in \tuple{x}, \\
  f_{\tuple{a}, \True}(\tuple{x}) & = \True \text { if either } \tuple{x} = \tuple{a} \text{ or } \tuple{x} \inflneq \tuple{a} ~ \& ~ \Neither \notin \tuple{x} \text{ or } \tuple{x} \infgneq \tuple{a} ~ \& ~ \Both \notin \tuple{x}.
\end{align*}
  Let us define $g$ as the conjunction of the following functions:
\begin{align*}
&  x_{i} \text{ for each } a_{i} = \True, \\
&  \dmneg x_{i} \text{ for each } a_{i} = \False, \\
& \Truebtob x_{i} \text{ for each } a_{i} \neq \Both, \\
& \Truenton x_{i} \text{ for each } a_{i} \neq \Neither, \\
& (\dmneg \Truenton x_{i} \vee \dmneg \Truebtob x_{j}) \text{ for each } a_{i} = \Neither \text{ and } a_{j} = \Both, \\
& \pbp_{1}(x_{i}, x_{j}) \text{ for each } a_{i} = \Neither \text{ and } a_{j} = \True, \\
& \pbp_{1}(x_{i}, \dmneg x_{j}) \text{ for each } a_{i} = \Neither \text{ and } a_{j} = \False, \\
& \pbp_{2}(x_{i}, x_{j}) \text{ for each } a_{i} = \Both \text{ and } a_{j} = \True, \\
& \pbp_{2}(x_{i}, \dmneg x_{j}) \text{ for each } a_{i} = \Both \text{ and } a_{j} = \False.
\end{align*}
  We claim that $g(\tuple{x}) = f_{\tuple{a}, \True}(\tuple{x})$. Clearly $g$ is a persistent $\Btwo$-preserving function such that $g(\tuple{a}) = \True$, hence $g(\tuple{x}) \in \{ \True, \Neither \}$ if $\tuple{x} \infleq \tuple{a}$ and $g(\tuple{x}) \in \{ \True, \Both \}$ if $\tuple{x} \infgeq \tuple{a}$. It follows that $g(\tuple{x}) = \True$ if $\tuple{x} = \tuple{a}$ or $\tuple{x} \inflneq \tuple{a} ~ \& ~ \Neither \notin \tuple{x}$ or $\tuple{x} \infgneq \tuple{a} ~ \& ~ \Both \notin \tuple{x}$.

  Now suppose that $\tuple{x} \inflneq \tuple{a}$ and $\Neither \in \tuple{x}$. (The case where $\tuple{x} \infgneq \tuple{a}$ and $\Both \in \tuple{x}$ is related to this case by conflation symmetry.) Then $x_{i} \inflneq a_{i}$ for some $x_{i}$ and $x_{j} = \Neither$ for some $x_{j}$. If $a_{i} \in \{ \True, \False \}$, then $x_{i} = \Neither$ and $g(\tuple{x}) \leq \Neither$, i.e.\ $g(\tuple{x}) = \Neither$, thanks to the conjuncts $x_{i}$ or $\dmneg x_{i}$. If $a_{i} = \Both$ and $x_{i} = \Neither$, then $g(\tuple{x}) \leq \Neither$ thanks to the conjunct $\Truenton x_{i}$. We can therefore assume that $a_{j} = \Neither$, otherwise we take $x_{i}$ to be $x_{j}$. But if $a_{i} = \Both$ and $a_{j} = x_{j} = \Neither$ and $x_{i} \in \{ \True, \False \}$, then $g(\tuple{x}) \leq \Neither$, i.e.\ $g(\tuple{x}) = \Neither$, thanks to the conjunct $(\dmneg \Truenton x_{j} \vee \dmneg \Truebtob x_{i})$.

  Finally, suppose that $\tuple{x} \ninfleq \tuple{a}$ and $\tuple{x} \ninfgeq \tuple{a}$. Then there are $x_{i}$ and $x_{j}$ such that $x_{i} \ninfleq a_{i}$ and $x_{j} \ninfgeq a_{j}$. It follows that $a_{i} \inflneq \Both$, $a_{j} \infgneq \Neither$, $x_{i} \infgneq \Neither$, and $x_{j} \inflneq \Both$. If $a_{i} \in \{ \True, \False \}$ and $x_{i} \in \{ \True, \False \}$, then $x_{i} = \dmneg a_{i}$ and $g(\tuple{x}) = \False$ thanks to the conjunct $x_{i}$ or $\dmneg x_{i}$. Likewise, if $a_{j} \in \{ \True, \False \}$ and $x_{j} \in \{ \True, \False \}$, then $x_{j} = \dmneg a_{j}$ and $g(\tuple{x}) = \False$ thanks to the conjunct $x_{j}$ or $\dmneg x_{j}$. If $x_{i} = \Both$ and $x_{j} = \Neither$, then $g(\tuple{x}) = \False$ thanks to the conjuncts $\Truebtob x_{i}$ and $\Truenton x_{j}$ (we have $a_{i} \neq \Both$ and $a_{j} \neq \Neither$).

  It remains to deal with cases where either $x_{i} \in \{ \True, \False \}$ and $a_{i} = \Neither$ or $x_{j} \in \{ \True, \False \}$ and $a_{j} = \Both$. Suppose therefore that $x_{i} \in \{ \True, \False \}$ and $a_{i} = \Neither$ (the other case will follow by conflation symmetry). If $a_{j} = \Both$, then $g(\tuple{x}) = \False$ thanks to the conjunct $(\dmneg \Truenton x_{i} \vee \dmneg \Truebtob x_{j})$. If $a_{j} = \True$, then $g(\tuple{x}) = \False$ thanks to the conjunct $\pbp_{1}(x_{i}, x_{j})$, since $x_{j} \ninfgeq a_{j}$. If $a_{j} = \False$, then $g(\tuple{x}) = \False$ thanks to the conjunct $\pbp_{1}(x_{i}, \dmneg x_{j})$, again since $x_{j} \ninfgeq a_{j}$.

  The function $f$ can now be expressed as a disjunction of functions of the form $f_{\tuple{a}, b}$ as usual, proving that $f$ lies in $\clonegen{\DMAClone, \pbp_{1}, \pbp_{2}}$.

  (ii) Suppose that $f$ is a persistent De Morgan function preserving~$\Kthree$. We decompose $f$ as a disjunction of simpler functions. It will suffice to express the following functions for each $\tuple{a} \in \DMfour^{n}$:
\begin{align*}
  f_{\tuple{a}, \Neither}(\tuple{x}) & = \Neither \text { if } \tuple{x} \infleq \tuple{a}, & f_{\tuple{a}, \Neither}(\tuple{x}) & = \False \text{ if } \tuple{x} \ninfleq \tuple{a},
\end{align*}
  and the following functions for each $\tuple{a} \in \DMfour^{n}$ such that $\Both \in \tuple{a}$:
\begin{align*}
  f_{\tuple{a}, \Both}(\tuple{x}) & = \Both \text { if } \tuple{x} \infgeq \tuple{a}, & f_{\tuple{a}, \Both}(\tuple{x}) & = \False \text{ if } \tuple{x} \ninfgeq \tuple{a},
\end{align*}
  and the following functions for each $\tuple{a} \in \DMfour^{n}$:
\begin{align*}
  f_{\tuple{a}, \True}(\tuple{x}) & = \True \text { if } \tuple{x} = \tuple{a} \text{ or } (\tuple{x} \infgneq \tuple{a} \text{ and } \Both \notin \tuple{x}), & f_{\tuple{a}, \True}(\tuple{x}) & = \False \text{ else}.
\end{align*}
  Each persistent De Morgan function which preserves $\Kthree$ is then a disjunction of these functions.

  The functions $f_{\tuple{a}, \Both}$ have already been dealt with in the proof of (i), as has $f_{\tuple{a}, \Neither}$ in case $\Neither \in \tuple{a}$. If $\Neither \notin \tuple{a}$, it suffices to add $\Neither$ as a conjunct to the expression for $f_{\tuple{a}, \Neither}$ in (ii). It thus remains to express the functions $f_{\tuple{a}, \True}$. To do so, it suffices to add the following conjuncts to the function $g$ from (i) to obtain the function~$h$:
\begin{align*}
  & \Neither \vee x_{i} \text{ for each } a_{i} = \Both, \\
  & \Neither \vee \dmneg x_{i} \text{ for each } a_{i} = \Both.
\end{align*}
  The only difference compared to (i) occurs in case $\tuple{x} \inflneq \tuple{a}$ if $a_{i} = \Both$ and $x_{i} \in \{ \True, \False \}$, where the conjuncts $\Neither \vee x_{i}$ and $\Neither \vee \dmneg x_{i}$ ensure that $h(\tuple{x}) = \Neither$.

  (iii) follows from (ii) by conflation symmetry.
\end{proof}

  Let us remark that the use of binary functions cannot be avoided in the last theorem. This is because by a routine but tedious case analysis we can show that each unary persistent function preserving $\Btwo$ is definable over $\DMAClone$ using the functions $\Truebtob$ and $\Truenton$, but the clone $\clonegen{\DMAClone, \Truebtob, \Truenton}$ does not include the functions $\pbp_{1}$ and $\pbp_{2}$ because the subsets of $\DMfour^{2}$
\begin{align*}
  \{ \pair{\True}{\True}, \pair{\Neither}{\True}, \pair{\Neither}{\False}, \pair{\False}{\False} \}, \\
  \{ \pair{\True}{\True}, \pair{\Both}{\True}, \pair{\Both}{\False}, \pair{\False}{\False} \},
\end{align*}
  are closed under all operations of this clone but the first subset is not closed under $\pbp_{1}$ and the second is not closed under $\pbp_{2}$. Because the first set is closed under $\pbp_{2}$ and the second set is closed under $\pbp_{1}$, the same example also shows that the generating sets in the last theorem are irredundant.

\section{De~Morgan clones above \texorpdfstring{$\DMAClone$}{DMA}}
\label{sec: above dma}

  In the previous section we described the De Morgan clones which preserve some of the structure of the algebra $\DMfouralg$. The goal of the current section is to describe the De Morgan clones above $\DMAClone$ which \emph{fail} to preserve this structure. For example, each clone above $\DMAClone$ which is not harmonious must contain one of ten at most binary non-harmonious functions which we list explicitly. This will enable us to describe the upper covers of $\DMAClone$ in the lattice of all De~Morgan clones.

\begin{theorem}[Clones not preserving subalgebras] \label{clones not preserving subalgebras}
  A De~Morgan clone above $\DMAClone$ fails to preserve
\begin{enumerate}[\rm(i)]
\item $\Btwo$ iff it contains either $\Neither$ or $\Both$,
\item $\Kthree$ iff it contains either $\Both$ or $\idntob$ or $\dual$,
\item $\Pthree$ iff it contains either $\Neither$ or $\idbton$ or $\dual$.
\end{enumerate}
\end{theorem}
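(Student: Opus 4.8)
The plan is to treat the two directions separately, the right-to-left direction being the routine one. For right-to-left it suffices to exhibit in each case a single generator that already fails to preserve the relevant subalgebra: the constant $\Both$ maps $\True \in \Kthree$ to $\Both \notin \Kthree$, while $\idntob$ and $\dual$ both send $\Neither \in \Kthree$ to $\Both \notin \Kthree$, and dually for $\Pthree$; for $\Btwo$, each of the constants $\Neither, \Both$ takes Boolean inputs outside $\Btwo$. Since a clone fails to preserve a set as soon as one of its members does, this settles right-to-left in all three items, and item~(i) left-to-right is equally quick: if some $f \in \clone{C}$ witnesses a failure on a Boolean tuple $\tuple{a} \in \Btwo^{n}$, then substituting the constants $\True, \False \in \DMAClone$ for the variables according to $\tuple{a}$ turns $f$ into the constant function $f(\tuple{a}) \in \{\Neither, \Both\}$, which therefore lies in $\clone{C}$.

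The substance is the left-to-right direction of~(ii). First I would reduce to a unary witness: a failure to preserve $\Kthree$ means there is $f \in \clone{C}$ and $\tuple{a} \in \Kthree^{n}$ with $f(\tuple{a}) = \Both$ (the only value outside $\Kthree$). Substituting $\True, \False$ for the Boolean coordinates of $\tuple{a}$ and a single fresh variable $x$ for all coordinates where $a_{i} = \Neither$ yields a unary $h \in \clone{C}$ with $h(\Neither) = \Both$. The whole problem is now to generate one of $\Both, \idntob, \dual$ from $\DMAClone \cup \{h\}$, using only $\wedge, \vee, \dmneg$ and the constants $\True, \False$.

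Next comes a case split on the Boolean behaviour of $h$. If $h(\True)$ or $h(\False)$ lies in $\{\Neither, \Both\}$, then composing $h$ with the constant $\True$ or $\False$ produces the constant $\Neither$ or $\Both$ in $\clone{C}$; and if only $\Neither$ is obtained this way, a further application of $h$ to that constant yields the constant $\Both$, since $h(\Neither) = \Both$. So in this case $\clone{C}$ contains the constant $\Both$. In the remaining case $h(\True), h(\False) \in \Btwo$; post-composing with $\dmneg$ if necessary I may assume $h(\True) = \True$. I would then form
\[ E(x) \assign (x \vee \dmneg h(x)) \wedge h(x), \]
and a direct computation (using $h(\True) = \True$, $h(\Neither) = \Both$, and $h(\False) \in \Btwo$) shows $E(\True) = \True$, $E(\False) = \False$, $E(\Neither) = \Both$, with $E(\Both) \in \{\False, \Neither, \Both\}$ determined by $h(\Both)$. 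Thus $E = \idntob$ when $E(\Both) = \Both$ and $E = \dual$ when $E(\Both) = \Neither$; and in the one leftover case $E(\Both) = \False$, replaying the dual construction $(x \wedge \dmneg E(x)) \vee E(x)$ with $E$ in place of $h$ returns exactly $\idntob$. Finally, item~(iii) follows from~(ii) by conflation symmetry: since $\dual$ is an automorphism of $\DMfouralg$, conjugation by it preserves $\DMAClone$ and interchanges $\Kthree$ with $\Pthree$, the constant $\Both$ with the constant $\Neither$, and $\idntob$ with $\idbton$, while fixing $\dual$.

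The main obstacle is precisely the construction in this last case. The targets $\idntob$ and $\dual$ reverse the information order on $\{\Neither, \Both\}$, so they are genuinely non-persistent, whereas every operation available from $\DMAClone$ is persistent; moreover the incomparability of $\Neither$ and $\Both$ in the truth order means that any naive meet or join of $h$ with a projection collapses the crucial value $h(\Neither) = \Both$ to $\True$ or $\False$. The art is therefore to find truth-lattice terms in $h$ and $\dmneg h$ that preserve the value $\Both$ at $\Neither$ while simultaneously pinning the Boolean coordinates to the identity; the formulas for $E$ and its dual are exactly such terms, and verifying that they behave as claimed on all four arguments is the only real computation in the proof.
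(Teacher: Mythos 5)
Your proof is correct and follows essentially the same route as the paper: reduce to a unary witness $h$ with $h(\Neither) = \Both$, extract the constant $\Both$ when $h$ misbehaves on Boolean inputs, and otherwise produce $\dual$ or $\idntob$ by explicit truth-lattice terms, with (iii) obtained from (ii) by conflation symmetry. The only difference is cosmetic: the paper normalizes via $g(x) = f(x \wedge \dmneg x) \wedge \dmneg f(x \wedge \dmneg x)$ and then the single formula $g(x) \vee (x \wedge \dmneg g(x))$ covers all three values of $g(\Both)$ at once, whereas your $E(x) = (x \vee \dmneg h(x)) \wedge h(x)$ needs one extra iteration in the subcase $E(\Both) = \False$.
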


\begin{proof}
  (i) If $f(\tuple{a}) \notin \Btwo$ for some $\tuple{a} \subseteq \Btwo$, then substituting the appropriate constants $\True$ for each $x_{i}$ such that $a_{i} = \True$ and $\False$ for each $x_{i}$ such that $a_{i} = \False$ in $f(\tuple{x})$ yields either the constant $\Neither$ or the constant $\Both$. 

  (ii) Let $f$ be a De~Morgan function which does not preserve $\Kthree$. Then $f(\tuple{a}) = \Both$ for some $\tuple{a} \subseteq \Kthree$. Substituting $\True$ for each $x_{i}$ such that $a_{i} = \True$, $\False$ for each $x_{i}$ such that $a_{i} = \False$ in $f(\tuple{x})$, and $x$ for each $x_{i}$ such that $a_{i} = \Neither$ yields either the constant $\Both$ or a unary function $f$ such that $f(\Neither) = \Both$.

  Now consider the function $g(x) = f(x \wedge \dmneg x) \wedge \dmneg f(x \wedge \dmneg x)$. Clearly $g(\Neither) = \Both$ and moreover $g(\True) = g(\False)$ and $g(a) < \True$ for each $a \in \DMfour$. If $g(\True) = \Both$, then our clone contains $\Both$. If $g(\True) = \Neither$, then $g(g(\True)) = \Both$, thus our clone again contains~$\Both$. Finally, suppose that $g(\True) = g(\False) = \False$. If $g(\Both) = \Neither$, then $\dual x = g(x) \vee (x \wedge \dmneg g(x))$. If $g(\Both) \leq \Both$, then $\idntob x = g(x) \vee (x \wedge \dmneg g(x))$.

  (iii) This follows from (ii) by conflation symmetry.
\end{proof}

  Recall that $\Truetton x$ is defined as $\Neither$ if $x = \True$ and as $\True$ otherwise, and similarly for $\Truettob$. In other words, $\Truetton x = \Neither \vee \dmneg \Box x$ and $\Truettob x = \Both \vee \dmneg \Box x$.

\begin{lemma}[Unary non-harmonious functions] \label{unary non-harmonious functions}
  A De~Morgan clone above $\DMAClone$ contains a unary non-harmonious function if and only if it contains $\Truenton$ or $\Truebtob$.
\end{lemma}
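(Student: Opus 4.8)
The plan is to prove the non-trivial (left-to-right) implication; the converse is immediate, since $\Truenton$ and $\Truebtob$ are themselves unary and non-harmonious (for instance $\Truenton(\dual \Neither) = \Truenton \Both = \True \neq \Both = \dual \Truenton \Neither$). Throughout I abbreviate $t \assign f(\True)$, $s \assign f(\False)$, $n \assign f(\Neither)$, $b \assign f(\Both)$, and I encode a unary De~Morgan function by the tuple $(f(\True), f(\False), f(\Neither), f(\Both))$ of its values. For a unary function, harmonicity amounts to the two requirements $t, s \in \Btwo$ and $b = \dual n$. The two gadgets that drive the whole argument are the identities
\begin{align*}
  \Truenton & = (x \vee \dmneg x) \vee \Neither, & \Truebtob & = (x \vee \dmneg x) \vee \Both,
\end{align*}
which are verified by a four-line evaluation (using $\Neither \vee \Both = \True$). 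Since $x \vee \dmneg x \in \DMAClone$, they show that as soon as a clone above $\DMAClone$ contains the constant $\Neither$ (respectively $\Both$) it contains $\Truenton$ (respectively $\Truebtob$).

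First I would dispose of the case in which $f$ sends a Boolean input outside $\Btwo$, say $t \notin \Btwo$ (the case $s \notin \Btwo$ is identical). Substituting the constant $\True \in \DMAClone$ for the variable produces the constant function $t \in \{ \Neither, \Both \}$ inside $\clone{C}$, and the gadgets above then yield $\Truenton$ or $\Truebtob$. Hence I may assume $t, s \in \Btwo$, so that non-harmonicity forces $b \neq \dual n$. If in addition $n, b \in \Btwo$, then $b \neq \dual n = n$, i.e.\ $\{ n, b \} = \{ \True, \False \}$; here I would check by direct evaluation that
\begin{align*}
  (x \vee \dmneg x) \vee f(x) \vee f(\dmneg x)
\end{align*}
equals $\Truebtob$ when $n = \True, b = \False$ and equals $\Truenton$ when $n = \False, b = \True$, using that $f(\dmneg x)$ merely swaps the values of $f$ at $\True$ and $\False$ while fixing those at $\Neither, \Both$.

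The remaining case is $t, s \in \Btwo$ with at least one of $n, b$ lying in $\{ \Neither, \Both \}$. Here the two candidate functions are $g(x) \assign f(x) \vee \dmneg f(x)$ and $T(x) \assign (f(x) \wedge \dmneg f(x)) \vee x \vee \dmneg x$, both lying in $\clone{C}$. Writing $[v] \assign v \vee \dmneg v$ (which is $\True$ on $\Btwo$ and the identity on $\{ \Neither, \Both \}$) one computes $g = (\True, \True, [n], [b])$ and $T = (\True, \True, A, B)$, where $A = \True$ if $[n] = \Both$ and $A = \Neither$ otherwise, and $B = \True$ if $[b] = \Neither$ and $B = \Both$ otherwise. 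A short tabulation over the six possible values of the pair $([n], [b])$ then shows that in each case one of $g, T$ already equals $\Truenton$ or $\Truebtob$: the function $g$ settles the pairs $(\True, \Both)$ and $(\Neither, \True)$, while $T$ settles $(\True, \Neither)$, $(\Neither, \Neither)$, $(\Both, \True)$, and $(\Both, \Both)$.

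I expect the only real labour to be this final bookkeeping: verifying that the listed value-tuple computations are correct and that the six cases here, together with the two Boolean configurations handled earlier, exhaust all twelve non-harmonious shapes of $(n, b)$. It is worth noting that $\Truebtob$ is the conflation dual of $\Truenton$, which serves as a useful consistency check on the case split; however, one cannot literally halve the work by appealing to conflation symmetry, since $\dual \notin \DMAClone$ and the clone $\clone{C}$ need not be closed under conflation.
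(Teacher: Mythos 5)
Your proof is correct: I checked the value tables for $(x \vee \dmneg x) \vee f(x) \vee f(\dmneg x)$, for $g = f \vee \dmneg f$ and $T = (f \wedge \dmneg f) \vee x \vee \dmneg x$, and your six admissible pairs $([n],[b])$ together with the two Boolean configurations do exhaust all twelve non-harmonious shapes of $(n,b)$. The overall strategy coincides with the paper's --- kill non-$\Btwo$-preserving $f$ via constants and the identities $\Truenton x = x \vee \dmneg x \vee \Neither$, $\Truebtob x = x \vee \dmneg x \vee \Both$, then analyse the values of $f$ at $\Neither$ and $\Both$ --- but your decomposition of the case analysis is genuinely different. The paper first normalizes $f(\True) = \True$ by passing to $\dmneg f$, introduces the auxiliary target $\idntof$ (with $\Truebtob x = \idntof x \vee \dmneg \idntof x$), and works through eight cases on the exact pair $(f(\Neither), f(\Both))$, several of which are settled only by reduction to earlier cases via substitutions such as $x \wedge f(x)$ or $x \vee f(x)$. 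You instead exploit the fact that your two fixed terms $g$ and $T$ have value tuples depending only on the invariants $n \vee \dmneg n$ and $b \vee \dmneg b$, which flattens everything into a short table with no WLOG step, no auxiliary function, and no chains of reductions; the price is one extra ad hoc formula for the all-Boolean configuration, which the paper absorbs into its general cases. Your version is more mechanical to verify and its exhaustiveness is self-evident, whereas the paper's reductions reuse fewer identities at the cost of dependency between cases. One correction to your closing remark: although $\dual \notin \clone{C}$ in general, conflation symmetry \emph{can} be used here at the meta-level, since the map taking a clone $\clone{C} \supseteq \DMAClone$ to $\set{\dual h(\dual \tuple{x})}{h \in \clone{C}}$ is an automorphism of the lattice of clones above $\DMAClone$ and the statement being proved is self-dual ($\Truenton$ and $\Truebtob$ are each other's conflation duals); this is exactly how the paper halves its work in Lemmas~\ref{unary non-persistent functions} and~\ref{binary non-persistent functions}, so your six-row table could have been cut to three after all.
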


\begin{proof}
  Let $\idntof x$ be the unary De~Morgan function such that $\idntof \Neither = \False$ and $\idntof x = x$ otherwise. It follows that $\Truebtob = \idntof x \vee \dmneg \idntof x$.

  Let $f$ be a unary function such that $f(\Neither) \neq \dual f(\Both)$. If $f$ does not preserve~$\Btwo$, then one of the constants $\Neither$ or $\Both$ is expressible as $f(\True)$ or $f(\False)$. But $\Truenton x = x \vee \dmneg x \vee \Neither$, while $\Truebtob = x \vee \dmneg x \vee \Both$.

  Suppose therefore that $f$ preserves $\Btwo$. Taking the function $\dmneg f(x)$ instead of $f$ if necessary, we may assume without loss of generality that $f(\True) = \True$.

  If $f(\Neither) = \Neither$ and $f(\Both) \neq \False$, then $\Truenton x = x \vee \dmneg x \vee f(x)$. If $f(\Neither) = \Neither$ and $f(\Both) = \False$, consider instead the function $\dmneg f(x)$.

  If $f(\Neither) = \Both$ and $f(\Both) \neq \False$, then $\idntof x = x \wedge f(x)$. If $f(\Neither) = \Both$ and $f(\Both) = \False$, then $\idntof x = (x \wedge f(x)) \vee (x \wedge \dmneg f(x))$.

  If $f(\Neither) = \False$ and $f(\Both) \neq \Neither$, then $\idntof x = x \wedge f(x)$. If $f(\Neither) = \False$ and $f(\Both) = \Neither$, then considering instead the function $x \vee f(x)$ reduces the situation to the case $f(\Neither) = \Neither$ and $f(\Both) = \True$.

  Finally, if $f(\Neither) = \True$ and $f(\Both) = \Both$, then $\Truebtob x = f(x) \vee \dmneg f(x)$. If $f(\Neither) = \True$ and $f(\Both) \neq \Both$, then considering instead the function $x \wedge f(x)$ reduces the situation to the case $f(\Neither) = \Neither$ and $f(\Both) = \False$.
\end{proof}

  The binary non-harmonious functions $\mnh_{1}$, $\mnh_{2}$, $\nh_{1}$, \dots, $\nh_{6}$ used in the following theorem are defined in Figures~\ref{fig: non-harmonious binary functions} and \ref{fig: other non-harmonious binary functions}. Observe that $\mnh_{1}$ and $\mnh_{2}$ are conflation duals. (The name stands for ``minimal non-harmonious.'')

\begin{figure}
\begin{center}
\begin{tabular}{r | c c c c }
  $\mnh_{1}$ & $\True$ & $\False$ & $\Neither$ & $\Both$ \\
  \hline
  $\True$ & $\False$ & $\False$ & $\Neither$ & $\Both$ \\
  $\False$ & $\False$ & $\False$ & $\Neither$ & $\Both$ \\
  $\Neither$ & $\False$ & $\False$ & $\Neither$ & $\False$ \\
  $\Both$ & $\False$ & $\False$ & $\Neither$ & $\Both$
\end{tabular}
\qquad
\begin{tabular}{r | c c c c }
  $\mnh_{2}$ & $\True$ & $\False$ & $\Neither$ & $\Both$ \\
  \hline
  $\True$ & $\False$ & $\False$ & $\Neither$ & $\Both$ \\
  $\False$ & $\False$ & $\False$ & $\Neither$ & $\Both$ \\
  $\Neither$ & $\False$ & $\False$ & $\Neither$ & $\Both$ \\
  $\Both$ & $\False$ & $\False$ & $\False$ & $\Both$
\end{tabular}
\end{center}
\caption{The two minimal non-harmonious functions}
\label{fig: non-harmonious binary functions}
\end{figure}

\begin{figure}
\begin{center}
\begin{tabular}{r | c c c c }
  $\nh_{1}$ & $\True$ & $\False$ & $\Neither$ & $\Both$ \\
  \hline
  $\True$ & $\False$ & $\False$ & $\False$ & $\False$ \\
  $\False$ & $\False$ & $\False$ & $\False$ & $\False$ \\
  $\Neither$ & $\False$ & $\False$ & $\False$ & $\False$ \\
  $\Both$ & $\False$ & $\False$ & $\Neither$ & $\False$
\end{tabular}
\qquad
\begin{tabular}{r | c c c c }
  $\nh_{2}$ & $\True$ & $\False$ & $\Neither$ & $\Both$ \\
  \hline
  $\True$ & $\False$ & $\False$ & $\False$ & $\False$ \\
  $\False$ & $\False$ & $\False$ & $\False$ & $\False$ \\
  $\Neither$ & $\False$ & $\False$ & $\Neither$ & $\False$ \\
  $\Both$ & $\False$ & $\False$ & $\Neither$ & $\Both$
\end{tabular}

\bigskip

\begin{tabular}{r | c c c c }
  $\nh_{3}$ & $\True$ & $\False$ & $\Neither$ & $\Both$ \\
  \hline
  $\True$ & $\False$ & $\False$ & $\False$ & $\False$ \\
  $\False$ & $\False$ & $\False$ & $\False$ & $\False$ \\
  $\Neither$ & $\False$ & $\False$ & $\False$ & $\Both$ \\
  $\Both$ & $\False$ & $\False$ & $\False$ & $\False$
\end{tabular}
\qquad
\begin{tabular}{r | c c c c }
  $\nh_{4}$ & $\True$ & $\False$ & $\Neither$ & $\Both$ \\
  \hline
  $\True$ & $\False$ & $\False$ & $\False$ & $\False$ \\
  $\False$ & $\False$ & $\False$ & $\False$ & $\False$ \\
  $\Neither$ & $\False$ & $\False$ & $\Neither$ & $\Both$ \\
  $\Both$ & $\False$ & $\False$ & $\False$ & $\Both$
\end{tabular}
\end{center}
\caption{Some other non-harmonious functions}
\label{fig: other non-harmonious binary functions}
\end{figure}

\begin{theorem}[Non-harmonious clones] \label{non-harmonious clones}
  A De Morgan clone above $\DMAClone$ is non-harmonious if and only if it contains $\mnh_{1}$ or $\mnh_{2}$.
\end{theorem}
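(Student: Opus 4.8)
My plan rests on the conflation symmetry. Since $\DMAClone$ is generated by $\wedge$, $\vee$, $\True$, $\False$, $\dmneg$, each of which is its own conflation dual, the clone $\DMAClone$ is closed under conflation duality; as this duality interchanges $\mnh_{1}$ with $\mnh_{2}$, the constants $\Neither$ and $\Both$, and the functions $\Truenton$ and $\Truebtob$, it will suffice to treat one member of each such pair and obtain the other for free.

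The forward implication is immediate: at $\tuple{a} = \pair{\Neither}{\Both}$ we have $\mnh_{1}(\dual \tuple{a}) = \mnh_{1}(\Both, \Neither) = \Neither$ but $\dual \mnh_{1}(\tuple{a}) = \dual \False = \False$, so $\mnh_{1}$, and dually $\mnh_{2}$, is non-harmonious; hence any clone containing one of them is non-harmonious. For the converse, let $\clone{C} \supseteq \DMAClone$ contain a non-harmonious $f$ with $f(\dual \tuple{a}) \neq \dual f(\tuple{a})$ for some $\tuple{a}$. First I would lower the arity: substituting $\True$ and $\False$ for the Boolean coordinates of $\tuple{a}$, one variable $y$ for all coordinates equal to $\Neither$, and one variable $z$ for all coordinates equal to $\Both$, produces $g \in \clone{C}$ of arity at most two that still witnesses the defect, namely $g(\Both, \Neither) \neq \dual g(\Neither, \Both)$ in the binary case, or $g(\Both) \neq \dual g(\Neither)$ if only one of $\Neither, \Both$ occurs in $\tuple{a}$ (and a non-harmonious constant if $\tuple{a}$ is Boolean).

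If $g$ is unary, Lemma~\ref{unary non-harmonious functions} puts $\Truenton$ or $\Truebtob$ into $\clone{C}$, and a direct computation gives
\[
  \mnh_{1}(x, y) = (y \wedge \dmneg y) \wedge \Truenton x,
\]
so $\Truenton$ generates $\mnh_{1}$, whence by conflation symmetry $\Truebtob$ generates $\mnh_{2}$. If $g$ is binary, I would normalize it by replacing each argument $t$ by $t \wedge \dmneg t$: this fixes $\Neither$ and $\Both$, identifies the $\True$ and $\False$ rows and columns, and leaves the defect at $\pair{\Neither}{\Both}$ intact, so that only the restriction of $g$ to $\{\False, \Neither, \Both\}^{2}$ matters. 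A finite case analysis on the pair $(g(\Neither, \Both), g(\Both, \Neither))$, which by hypothesis is not of the form $(c, \dual c)$, should then reduce $g$ — using meets, joins and De~Morgan negations — either to a unary non-harmonious function (already handled) or to one of the minimal binary non-harmonious functions $\nh_{1}, \dots, \nh_{6}$. Each of these generates $\mnh_{1}$ or $\mnh_{2}$ by a short explicit term; for example, swapping the arguments of $\nh_{1}$ and negating gives
\[
  \mnh_{1}(x, y) = (y \wedge \dmneg y) \wedge \dmneg \nh_{1}(y, x).
\]

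The main obstacle is this last classification step: verifying that the De~Morgan operations alone suffice to collapse every binary non-harmonious function onto one of the six representatives $\nh_{1}, \dots, \nh_{6}$, and exhibiting for each a generating term for $\mnh_{1}$ or $\mnh_{2}$. This is a sizeable but finite case check, and conflation symmetry — pairing each $\nh_{i}$ with its dual and $\mnh_{1}$ with $\mnh_{2}$ — cuts it roughly in half.
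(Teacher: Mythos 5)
Your outline coincides with the paper's own proof: reduce arity by substituting constants and variables, settle the unary case via Lemma~\ref{unary non-harmonious functions}, and finish by expressing $\mnh_{1}$ or $\mnh_{2}$ in terms of whatever minimal non-harmonious function is obtained. The parts you state in full are correct: the forward direction, the identity $\mnh_{1}(x,y) = (y \wedge \dmneg y) \wedge \Truenton x$, and the identity $\mnh_{1}(x, y) = (y \wedge \dmneg y) \wedge \dmneg \nh_{1}(y, x)$, which is in fact more direct than the paper's two-step route through $\nh_{2}$.

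The genuine gap is the binary case, which you defer with ``should then reduce'' and yourself call ``the main obstacle'': that classification is not a routine finish but the bulk of the paper's argument, and nothing in your proposal actually carries it out. Concretely, the paper must (1) note that whenever a harmonicity identity involving a Boolean coordinate or the diagonal fails --- e.g.\ $\dual f(a,b) \neq f(a, \dual b)$ for $a \in \Btwo$, or $\dual f(a,a) \neq f(\dual a, \dual a)$ --- substitution or identification of variables already drops back to the unary case, so these identities may be assumed; (2) normalize the witness so that $f(\Neither, \Both) = \False$, via a preliminary case split (replacing $f$ by $\dmneg f(x,y)$, $f(x,y) \wedge y$, $f(x,y) \wedge x$, or $f(y,x)$ according to the values at $\pair{\Neither}{\Both}$ and $\pair{\Both}{\Neither}$); (3) pass to the function $g(x, y) = f(x \wedge \dmneg x, y \wedge \dmneg y) \wedge (x \vee y) \wedge (x \vee \dmneg y) \wedge (\dmneg x \vee y) \wedge (\dmneg x \vee \dmneg y)$ --- your normalization $f(x \wedge \dmneg x, y \wedge \dmneg y)$ alone does not annihilate the Boolean rows and columns, so it leaves many more unconstrained values; and (4) run a two-branch case analysis on whether $g(\Both, \Neither) \geq \Neither$ or $g(\Both, \Neither) \geq \Both$, each branch with four subcases, two of which are resolved only by the non-obvious compositions $h(h(y,x),x)$ and $h(y, h(y,x))$. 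It is exactly this analysis that pins down the actual list of possible outcomes ($\Truenton$, $\Truebtob$, $\mnh_{1}$, $\mnh_{2}$, $\nh_{1}, \dots, \nh_{4}$); your appeal to ``the six representatives $\nh_{1}, \dots, \nh_{6}$'' presupposes a classification that has not been established, so the final step of exhibiting a generating term for each representative cannot even be set up. In short: correct plan, correct endgame identities, but the central classification argument is missing.
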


\begin{proof}
  Suppose that a clone above $\DMAClone$ contains a non-harmonious function~$f$. Then $f(\dual \tuple{a}) \neq \dual f(\tuple{a})$ for some $\tuple{a} \subseteq \DMfour$. Substituting $\True$ for each $x_{i}$ such that $a_{i} = \True$, $\False$ for each $x_{i}$ such that $a_{i} = \False$, $x$ for each $x_{i}$ such that $a_{i} = \Neither$ and $y$ for each $x_{i}$ such that $a_{i} = \Both$ yields either one of the constants $\Neither$, $\Both$ or a unary function $f$ such that $f(\Neither) \neq \dual f(\Both)$ or a binary function $f$ such that $f(\Neither, \Both) \neq \dual f(\Both, \Neither)$. The first two cases have already been dealt with in Lemma~\ref{unary non-harmonious functions}. Let us therefore turn to the binary case now and assume that $\dual f(a, b) = f(a, \dual b)$ and $\dual f(b, a) = f(\dual b, a)$ for each $a \in \Btwo$ and $b \in \DMfour$, and moreover $\dual f(a, a) = f(\dual a, \dual a)$ for each $a \in \DMfour$.

  We may assume without loss of generality that $f(\Neither, \Both) = \False$. If $f(\Neither, \Both) = \True$, take the function $\dmneg f(x, y)$ instead. If $f(\Neither, \Both) = \Neither$ and $f(\Both, \Neither) \geq \Neither$, take the function $f(x, y) \wedge y$ instead. If $f(\Neither, \Both) = \Neither$ and $f(\Both, \Neither) = \False$, take the function $f(y, x)$ instead. If $f(\Neither, \Both) = \Both$ and $f(\Both, \Neither) \geq \Both$, take the function $f(x, y) \wedge x$ instead. If $f(\Neither, \Both) = \Both$ and $f(\Both, \Neither) = \False$, take the function $f(y, x)$ instead.

  Instead of $f$, consider now the binary function
\begin{align*}
  g(x, y) & = f(x \wedge \dmneg x, y \wedge \dmneg y) \wedge (x \vee y) \wedge (x \vee \dmneg y) \wedge (\dmneg x \vee y) \wedge (\dmneg x \vee \dmneg y).
\end{align*}
  This function satisfies $g(\Neither, \Both) = \False$ and $g(\Both, \Neither) \neq \dual g(\Neither, \Both) = \False$. For each $a \in \Btwo$ we also have
\begin{align*}
  g(\Neither, a) & \leq \Neither, & g(a, \Neither) & \leq \Neither, \\
  g(\Both, a) & \leq \Both, & g(a, \Both) & \leq \Both.
\end{align*}
  Moreover, $g(a, b) = \False$ whenever $\{ a, b \} \subseteq \Btwo$, and $g(\False, a) = g(\True, a)$ and $g(a, \False) = g(a, \True)$ for each $a \in \DMfour$. Finally, we may assume that $g(\Neither, \Neither) = \dual g(\Both, \Both)$ and $g(\True, \dual a) = \dual g(\True, a)$ and $g(\False, \dual a) = \dual g(\False, a)$, otherwise taking the function $g(x, x)$ or $g(\True, a)$ or $g(\False, a)$ reduces the situation to the unary case.

  There are eight cases to consider now.
\begin{itemize}
\item[(a)] Suppose that $g(\Both, \Neither) \geq \Neither$. Then consider $h(x, y) \assign f(x, y) \wedge y \wedge \dmneg y$. We have $h(a, b) = \False$ for each $b \in \Btwo$ and $h(a, b) \leq b$ for each $b \in \DMfour$. Moreover, $h(\Neither, \Both) = \False$ and $h(\Both, \Neither) = \Neither$. It remains to consider four subcases.
  \begin{itemize}
    \item[(i)] $h(a, b) = \False$ and $h(b, b) = \False$ for $a \in \{ \True, \False \}$ and $b \in \{ \Neither, \Both \}$.
    \item[(ii)] $h(a, b) = \False$ and $h(b, b) = b$ for $a \in \{ \True, \False \}$ and $b \in \{ \Neither, \Both \}$.
    \item[(iii)] $h(a, b) = b$ and $h(b, b) = \False$ for $a \in \{ \True, \False \}$ and $b \in \{ \Neither, \Both \}$. But now the function $h(h(y, x), x)$ is the function $h$ from case (b ii).
    \item[(iv)] $h(a, b) = b$ and $h(b, b) = b$ for $a \in \{ \True, \False \}$ and $b \in \{ \Neither, \Both \}$.
  \end{itemize}
\item[(b)] Suppose that $g(\Both, \Neither) \geq \Both$. Then consider $h(x, y) = f(x, y) \wedge x \wedge \dmneg x$. We have $h(a, b) = \False$ for each $a \in \Btwo$ and $h(a, b) \leq a$ for each $a \in \DMfour$. Moreover, $h(\Neither, \Both) = \False$ and $h(\Both, \Neither) = \Both$. It remains to consider four subcases.
  \begin{itemize}
    \item[(i)] $h(b, a) = \False$ and $h(b, b) = \False$ for $a \in \{ \True, \False \}$ and $b \in \{ \Neither, \Both \}$.
    \item[(ii)] $h(b, a) = \False$ and $h(b, b) = b$ for $a \in \{ \True, \False \}$ and $b \in \{ \Neither, \Both \}$.
    \item[(iii)] $h(b, a) = b$ and $h(b, b) = \False$ for $a \in \{ \True, \False \}$ and $b \in \{ \Neither, \Both \}$. But now the function $h(y, h(y, x))$ is the function $h$ from case (a ii).
    \item[(iv)] $h(b, a) = b$ and $h(b, b) = b$ for $a \in \{ \True, \False \}$ and $b \in \{ \Neither, \Both \}$.
  \end{itemize}
\end{itemize}
  The above case analysis shows that one of the functions $\Truenton$, $\Truebtob$, $\mnh_{1}$, $\mnh_{2}$, $\nh_{1}$, \dots, $\nh_{4}$ belongs to the clone in question.

  It remains to show that $\mnh_{1}$ and $\mnh_{2}$ can be expressed in terms of the other functions:
\begin{align*}
  \mnh_{1}(x, y) & = y \wedge \dmneg y \wedge \Truenton x, \\
  \nh_{2}(x, y) & = \nh_{1}(x, y) \vee (x \wedge \dmneg x \wedge y \wedge \dmneg y), \\
  \mnh_{1}(x, y) & = (y \wedge \dmneg y) \wedge [((x \vee \dmneg x) \wedge (y \vee \dmneg y)) \vee \nh_{2}(x, y)],
\end{align*}
  The same relations hold between $\mnh_{2}$, $\Truebtob$, $\nh_{3}$, and $\nh_{4}$, since these are, respectively, the conflation duals of $\mnh_{1}$, $\Truenton$, $\nh_{1}$, and $\nh_{2}$.
\end{proof}

\begin{lemma}[Unary non-persistent functions] \label{unary non-persistent functions}
  A De~Morgan clone above $\DMAClone$ contains a unary non-persistent function if and only if it contains one of the functions $\Box$, $\idntot$, $\idbtot$. It contains a unary harmonious non-persistent function if and only if it contains~$\Box$.
\end{lemma}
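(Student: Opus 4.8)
The plan is to prove the backward directions by inspection and the forward directions by reducing an arbitrary unary non-persistent $f$, via the symmetries available in $\DMAClone$, to a short list of cases in each of which a target function is produced by an explicit De~Morgan combination of $f$ with the identity projection. For the backward directions, I would simply record that each of $\Box$, $\idntot$, $\idbtot$ is non-persistent: the pair $\True \infleq \Both$ witnesses this for $\Box$ (since $\Box\True = \True \ninfleq \False = \Box\Both$), the pair $\Neither \infleq \False$ witnesses it for $\idntot$, and $\False \infleq \Both$ for $\idbtot$. Moreover $\Box$ is harmonious (one checks $\Box\dual x = \dual \Box x$ on all four values), while $\idntot$ and $\idbtot$ are not, which already gives the easy half of the second equivalence.

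For the forward direction of the first statement, let $f$ be a unary non-persistent function in the clone. First I would set up the symmetries: since $\dmneg \in \DMAClone$ we may freely replace $f$ by $\dmneg \circ f$ or by $f \circ \dmneg$ (negating the outputs, resp.\ swapping the inputs $\True$ and $\False$), and since $\dual$ is an automorphism of $\DMfouralg$ that reverses $\infleq$, conjugating the whole argument by $\dual$ replaces $f$ by its conflation dual while fixing $\DMAClone$ and interchanging the targets $\idntot \leftrightarrow \idbtot$ and $\Box \leftrightarrow \Box$. These symmetries act transitively on the four covering witnesses of non-persistence, so up to symmetry I may assume the witness is $f(\Neither) \ninfleq f(\False)$. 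Next I would normalise the Boolean behaviour: composing $f$ with $x \vee \dmneg x$ or $x \wedge \dmneg x$ collapses the two Boolean inputs to a single value (namely $f(\True)$, resp.\ $f(\False)$) while fixing the values at $\Neither$ and $\Both$, so if either $f(\True)$ or $f(\False)$ is Boolean this already yields a $\Btwo$-preserving non-persistent function; in the residual case $f$ sends a Boolean input to a constant in $\{\Neither, \Both\}$, and I would feed that constant back into the identities $\Neither \wedge \Both = \False$ and $\Neither \vee \Both = \True$ to manufacture Boolean outputs and again reach a $\Btwo$-preserving function. Once $f$ preserves $\Btwo$ it is determined by the pair $(f(\Neither), f(\Both))$ together with $f \restriction \Btwo$, which $\dmneg$-composition normalises to the identity or to a constant; a finite case analysis on $(f(\Neither), f(\Both))$ then produces a target by a formula built from $f$, the projection $x$, $\dmneg$, and the constants already obtained, representative reductions being $f \vee x$, $f \wedge x$, $\dmneg\, f\, \dmneg$, and $f \wedge \Both$, which move $(f(\Neither), f(\Both))$ toward $(\True, \Both) = \idntot$, $(\Neither, \True) = \idbtot$, or $(\False, \False) = \Box$.

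For the forward direction of the second statement, I would exploit that a harmonious $f$ satisfies $f(\True), f(\False) \in \Btwo$ and $f(\Both) = \dual f(\Neither)$; in particular $f$ preserves $\Btwo$, so the delicate Boolean normalisation above is vacuous and the construction never needs to extract a non-Boolean constant from a Boolean input. The construction then stays inside the harmonious operations $\wedge$, $\vee$, $\dmneg$ (together with $\True, \False$), so the target it yields is harmonious; as $\idntot$ and $\idbtot$ are not harmonious while $\Box$ is, the clone must contain $\Box$. Conversely $\Box$ is harmonious and non-persistent, so this direction is immediate.

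I expect the main obstacle to be the $\Btwo$-preserving reduction in the first statement, specifically the residual case where $f$ takes only the values $\Neither$ and $\Both$ (or maps a Boolean to a non-Boolean): there one cannot restrict cleanly to the Booleans and must instead exploit that the identity projection $x$ supplies four distinct values, so that combinations such as $(f \wedge \Both) \vee x$ create the spread of values a target requires. A secondary, purely bookkeeping difficulty is keeping the case analysis on $(f(\Neither), f(\Both))$ manageable, which is exactly what the De~Morgan and conflation symmetries are for; and for the second statement one must confirm that the harmonious instances of that analysis genuinely avoid the non-harmonious constants $\Neither$ and $\Both$, so that the harmonicity-preservation argument applies.
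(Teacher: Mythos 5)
Your skeleton is close to the paper's: reduce by the symmetries $\dmneg \circ f$, $f \circ \dmneg$ and conflation conjugation to a single covering witness, then do a case analysis that produces a target via combinations like $f \wedge x$, $f \vee x$, $\dmneg f(\dmneg x)$. The easy directions are fine, and so is your second claim: a harmonious $f$ preserves $\Btwo$, your $\Btwo$-preserving case analysis does go through, and it stays inside harmonious operations, forcing the target to be $\Box$. The paper streamlines the case analysis with the single normal form $g(x) \assign f(x) \wedge f(\True) \wedge x$, but that is a difference of efficiency, not of substance.

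The genuine gap is in your treatment of non-$\Btwo$-preserving $f$, which is where the real content of the lemma lies. First, a local error: it is not true that ``if either $f(\True)$ or $f(\False)$ is Boolean'' the collapse yields a $\Btwo$-preserving non-persistent function. Take $f = \Truetton \circ \dmneg$, i.e.\ $f(\False) = \Neither$ and $f = \True$ elsewhere, with witness $f(\Neither) \ninfleq f(\False)$. Here $f(\True) = \True$ is Boolean, but the collapse $f(x \vee \dmneg x)$ is the constant $\True$, hence persistent; the other collapse $f(x \wedge \dmneg x)$ keeps the witness but is not $\Btwo$-preserving. Only the value at the Boolean input occurring in the witness matters. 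Second, and more seriously, your residual case cannot be repaired by ``feeding constants into $\Neither \wedge \Both = \False$ and $\Neither \vee \Both = \True$'': for this very $f$ the clone $\clonegen{\DMAClone, \Truetton}$ preserves $\Kthree$, so its only constants are $\True, \False, \Neither$; the constant $\Both$ simply is not there, and neither your lattice identities nor your formula $(f \wedge \Both) \vee x$ are legitimate moves. Producing a target from $\Truetton$ genuinely requires applying $\Truetton$ to compound arguments; the paper's proof is organized so that the case analysis terminates in $\Box$, $\idntot$, $\idbtot$, or one of $\Truetton$, $\Truettob$, and then disposes of the last two with the explicit identity $\idbtot x = \dmneg (\Truetton \Truetton \dmneg (x \wedge \Truetton x)) \vee x$ (plus its conflation dual). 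Your plan, which insists on first reaching a $\Btwo$-preserving non-persistent function, has no counterpart of this step, and without it the argument does not close.
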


\begin{proof}
  Let $f$ be a unary non-persistent function. Then either $f(\Neither) \ninfleq f(\True)$ or $f(\Neither) \ninfleq f(\False)$ or $f(\True) \ninfleq f(\Both)$ or $f(\False) \ninfleq f(\Both)$. Taking $f(\dmneg x)$ instead of $f(x)$ if necessary, we may assume that either $f(\Neither) \ninfleq f(\True)$ or $f(\True) \ninfleq f(\Both)$. Since these cases are related by conflation symmery, we only deal with the former case.

  If $f(\Neither) \ninfleq f(\True)$, then $f(\Neither) \neq \Neither$ and $f(\True) \neq \Both$. We may assume that $f(\Neither) < \True$ and $f(\True) > \False$, taking $\dmneg f(x)$ instead of $f(x)$ if $f(\Neither) = \True$, in which case $f(\True) < \True$. Thus $f(\Neither) \leq \Both$ and $f(\True) \geq \Neither$. But then $g(x) \assign f(x) \wedge f(\True) \wedge x$ is a non-persistent function such that $g(\Neither) = g(\False) = \False$ and $g(\True) \geq \Neither$ and $g(\Both) \leq \Both \wedge g(\True)$. If $f$ is harmonious, then so is $g$, hence $g(\True) = \True$ and $g(\Both) = \False$, i.e.\ $g(x) = \Box x$. If $g(\True) = \Neither$, then $g(\Both) = \False$, so $\dmneg g(x) = \Truetton x$. Finally, if $g(\True) = \True$ and $g(\Both) = \Both$, then $\dmneg g(\dmneg x) = \idntot x$. The conflation duals of $\Box$, $\Truetton$, and $\idntot$ are the functions $\Box$, $\Truettob$, $\idbtot$. Finally, observe that
\begin{align*}
  \idbtot x & = \dmneg (\Truetton \Truetton \dmneg (x \wedge \Truetton x)) \vee x.
\end{align*}
  By conflation duality, $\idntot$ can be expressed similarly in terms of $\Truettob$.
\end{proof}

  The binary functions $\mhnp$, $\mnp_{1}$, \dots, $\mnp_{4}$, $\np_{1}$, $\np_{2}$, $\np_{3}$ used in the following theorem are defined in Figures~\ref{fig: non-persistent binary functions} and \ref{fig: other non-persistent binary functions}. Observe that $\mnp_{1}$ and $\mnp_{3}$ are conflation duals, as are $\mnp_{2}$ and $\mnp_{4}$.

\begin{figure}
\begin{center}
\begin{tabular}{r | c c c c }
  $\mhnp$ & $\True$ & $\False$ & $\Neither$ & $\Both$ \\
  \hline
  $\True$ & $\True$ & $\True$ & $\True$ & $\True$ \\
  $\False$ & $\True$ & $\True$ & $\True$ & $\True$ \\
  $\Neither$ & $\Neither$ & $\Neither$ & $\Neither$ & $\False$ \\
  $\Both$ & $\Both$ & $\Both$ & $\False$ & $\Both$
\end{tabular}

\bigskip

\begin{tabular}{r | c c c c }
  $\mnp_{1}$ & $\True$ & $\False$ & $\Neither$ & $\Both$ \\
  \hline
  $\True$ & $\True$ & $\True$ & $\True$ & $\Both$ \\
  $\False$ & $\True$ & $\True$ & $\True$ & $\Both$ \\
  $\Neither$ & $\Neither$ & $\Neither$ & $\Neither$ & $\False$ \\
  $\Both$ & $\Both$ & $\Both$ & $\False$ & $\Both$
\end{tabular}
\qquad
\begin{tabular}{r | c c c c }
  $\mnp_{2}$ & $\True$ & $\False$ & $\Neither$ & $\Both$ \\
  \hline
  $\True$ & $\True$ & $\True$ & $\True$ & $\True$ \\
  $\False$ & $\True$ & $\True$ & $\True$ & $\True$ \\
  $\Neither$ & $\Neither$ & $\Neither$ & $\Neither$ & $\Neither$ \\
  $\Both$ & $\Both$ & $\Both$ & $\False$ & $\Both$
\end{tabular}

\bigskip

\begin{tabular}{r | c c c c }
  $\mnp_{3}$ & $\True$ & $\False$ & $\Neither$ & $\Both$ \\
  \hline
  $\True$ & $\True$ & $\True$ & $\Neither$ & $\True$ \\
  $\False$ & $\True$ & $\True$ & $\Neither$ & $\True$ \\
  $\Neither$ & $\Neither$ & $\Neither$ & $\Neither$ & $\False$ \\
  $\Both$ & $\Both$ & $\Both$ & $\False$ & $\Both$
\end{tabular}
\qquad
\begin{tabular}{r | c c c c }
  $\mnp_{4}$ & $\True$ & $\False$ & $\Neither$ & $\Both$ \\
  \hline
  $\True$ & $\True$ & $\True$ & $\True$ & $\True$ \\
  $\False$ & $\True$ & $\True$ & $\True$ & $\True$ \\
  $\Neither$ & $\Neither$ & $\Neither$ & $\Neither$ & $\False$ \\
  $\Both$ & $\Both$ & $\Both$ & $\Both$ & $\Both$ \\
\end{tabular}
\end{center}
\caption{The minimal non-persistent binary functions}
\label{fig: non-persistent binary functions}
\end{figure}

\begin{figure}
\begin{center}
\begin{tabular}{r | c c c c }
  $\np_{1}$ & $\True$ & $\False$ & $\Neither$ & $\Both$ \\
  \hline
  $\True$ & $\False$ & $\False$ & $\Neither$ & $\Both$ \\
  $\False$ & $\False$ & $\False$ & $\False$ & $\False$ \\
  $\Neither$ & $\False$ & $\False$ & $\False$ & $\False$ \\
  $\Both$ & $\False$ & $\False$ & $\False$ & $\False$
\end{tabular}
\qquad
\begin{tabular}{r | c c c c }
  $\np_{2}$ & $\True$ & $\False$ & $\Neither$ & $\Both$ \\
  \hline
  $\True$ & $\False$ & $\False$ & $\Neither$ & $\Both$ \\
  $\False$ & $\False$ & $\False$ & $\False$ & $\False$ \\
  $\Neither$ & $\False$ & $\False$ & $\False$ & $\False$ \\
  $\Both$ & $\False$ & $\False$ & $\False$ & $\Both$
\end{tabular}
\qquad
\begin{tabular}{r | c c c c }
  $\np_{3}$ & $\True$ & $\False$ & $\Neither$ & $\Both$ \\
  \hline
  $\True$ & $\False$ & $\False$ & $\Neither$ & $\False$ \\
  $\False$ & $\False$ & $\False$ & $\False$ & $\False$ \\
  $\Neither$ & $\False$ & $\False$ & $\False$ & $\False$ \\
  $\Both$ & $\False$ & $\False$ & $\False$ & $\False$
\end{tabular}
\end{center}
\caption{Some other non-persistent binary functions}
\label{fig: other non-persistent binary functions}
\end{figure}

\begin{lemma}[Binary non-persistent functions] \label{binary non-persistent functions}
  A De~Morgan clone above $\DMAClone$ contains a binary non-persistent function if and only if it contains one of the functions $\mhnp$, $\mnp_{1}$, \dots, $\mnp_{4}$. It contains a binary harmonious non-persistent function if and only if contains $\mhnp$.
\end{lemma}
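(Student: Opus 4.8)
The backward implications are read off from Figure~\ref{fig: non-persistent binary functions}: each of $\mhnp, \mnp_{1}, \dots, \mnp_{4}$ is non-persistent (for instance $(\True, \Neither) \infleq (\Both, \Neither)$ while $\mhnp(\True, \Neither) = \True \ninfleq \False = \mhnp(\Both, \Neither)$), and a routine check shows that $\mhnp$ is harmonious whereas the other four are not. The substance is the forward direction, and the plan is to follow the template of Theorem~\ref{non-harmonious clones}.

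So let $\clone{C} \supseteq \DMAClone$ contain a binary non-persistent function $f$. Persistence may be tested on the covering pairs of the information order, so non-persistence of $f$ is witnessed by a pair differing in one coordinate along a cover ($\Neither \infleq \True$, $\Neither \infleq \False$, $\True \infleq \Both$, $\False \infleq \Both$), the other coordinate being fixed at some $d$. If some witness has $d \in \Btwo$, then substituting the constant $\True$ or $\False$ (both in $\DMAClone$) produces a unary non-persistent function, and Lemma~\ref{unary non-persistent functions} puts one of $\Box, \idntot, \idbtot$ into $\clone{C}$; I retain these as building blocks rather than stopping here. Otherwise every witness has its fixed coordinate at $\Neither$ or $\Both$, the genuinely binary situation.

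The symmetries available are the swap $f(y, x)$, the De~Morgan negation $\dmneg$ (an automorphism of the information order), and the conflation $\dual$ (an anti-automorphism of the information order interchanging $\Neither$ and $\Both$). Modulo swap and $\dmneg$ the cover may be taken to be $\Neither \infleq \True$ or $\True \infleq \Both$, and conflation links the two; so, using $\dual$ to halve the work, I would normalize to a single witness $f(\True, \Neither) \ninfleq f(\Both, \Neither)$. Next I replace $f$ by a companion $g \in \clone{C}$ obtained by conjoining $f$ with suitable $\DMAClone$-terms (as with $g$ in Theorem~\ref{non-harmonious clones}), so that $g$ equals $\False$ on all Boolean inputs and its values elsewhere are bounded in the truth order by the arguments forcing them, while the witness survives. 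This leaves only finitely many value-patterns on arguments drawn from $\{\Neither, \Both\}$.

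A finite case analysis on these patterns then shows that $g$, or a simple composite of $g$ with $\DMAClone$-operations, equals one of the minimal functions $\mhnp, \mnp_{1}, \mnp_{2}$ or one of the auxiliary functions $\np_{1}, \np_{2}, \np_{3}$ of Figure~\ref{fig: other non-persistent binary functions} (or else a unary non-persistent function, returning to the building blocks above). The proof then closes exactly as in Theorem~\ref{non-harmonious clones}, by writing down explicit terms that recover $\mhnp, \mnp_{1}, \mnp_{2}$ from the building blocks $\Box, \idntot, \idbtot$ and the auxiliary functions $\np_{1}, \np_{2}, \np_{3}$ over $\DMAClone$; the remaining targets $\mnp_{3}, \mnp_{4}$ are the conflation duals of $\mnp_{1}, \mnp_{2}$ and so need no separate treatment. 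For the harmonious refinement I would note that if $f$ is harmonious then the swap, $\dmneg$, $\dual$ and the cleaning step all preserve harmonicity, so the analysis can only terminate at a harmonious building block ($\Box$ or $\np_{1}$) or at $\mhnp$ itself; each of these generates $\mhnp$ over $\DMAClone$ (for $\Box$ directly by Theorem~\ref{harmonious clones}(iv), since $\mhnp$ is harmonious and preserves $\Btwo, \Kthree, \Pthree$, and for $\np_{1}$ by the expressibility identities of the previous step). The main obstacle is precisely this combinatorial core: checking that the symmetry reduction together with the case analysis is genuinely exhaustive, and finding the expressibility identities that recover $\mnp_{1}, \mnp_{2}$ (and $\mhnp$) from $\np_{1}, \np_{2}, \np_{3}$ together with the unary building blocks.
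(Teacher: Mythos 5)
Your architecture is the same as the paper's (unary witnesses split off via Lemma~\ref{unary non-persistent functions}, normalization of a binary witness by symmetries, cleanup by $\DMAClone$-terms, a finite case analysis, and closing expressibility identities), and some details are genuinely nice -- e.g.\ recovering $\mhnp$ from $\Box$ by citing Theorem~\ref{harmonious clones}(iv) instead of an explicit formula. But your symmetry reduction contains a genuine error that breaks exhaustiveness. Conflation is not an operation of the clone; it acts on the problem globally, as $f \mapsto \dual f(\dual x, \dual y)$, and therefore it flips the fixed coordinate of a witness at the same time as it exchanges the two covers. Explicitly, a witness $f(\Neither,\Neither) \ninfleq f(\True,\Neither)$ of $f$ corresponds to the witness $g(\True,\Both) \ninfleq g(\Both,\Both)$ of the conflation dual $g$ -- fixed coordinate $\Both$, not $\Neither$. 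Hence modulo swapping the arguments, $\dmneg$, and conflation there remain \emph{two} inequivalent witness types, not one, say $f(\Neither,\Neither) \ninfleq f(\True,\Neither)$ and $f(\True,\Neither) \ninfleq f(\Both,\Neither)$; this is exactly why the paper, after using conflation to fix the second coordinate at $\Neither$, still runs two separate case analyses. Your normalization to the single witness $f(\True,\Neither) \ninfleq f(\Both,\Neither)$ silently discards the other orbit. That orbit is not a corner case: it is the one whose analysis produces $\np_{1}$, $\np_{2}$, $\np_{3}$ (the fact that these appear in your expected outcome even though your reduction leaves them no way to arise is a symptom of the problem), and in the harmonious situation it is the orbit terminating at $\np_{1}$, without which the second half of the statement is unproved.

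A secondary mismatch confirms that the two orbits need different treatment: the cleanup you describe (force $\False$ on Boolean inputs, values truth-bounded by the arguments) is the paper's normalization for the orbit you dropped, and it is incompatible with the witness you kept. With bounds of that shape one gets $g(\True,\Neither) \leq \True \wedge \Neither = \Neither$ and $g(\Both,\Neither) \leq \Both \wedge \Neither = \False$, so $g(\True,\Neither) \infleq g(\Both,\Neither)$ in the information order and the designated witness pair can no longer fail persistence; correspondingly, the minimal functions this orbit must yield, namely $\mhnp$, $\mnp_{1}$, $\mnp_{2}$, take the value $\True$, not $\False$, on Boolean inputs. The paper handles this orbit with a join-based normalization, $f(x \vee \dmneg x, y \vee \dmneg y) \vee y \vee \dmneg y$, followed by meets with $g(\True,y)$ and $x \vee \dmneg x$. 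To repair your proof, keep both witness orbits and give each its own normalization, as the paper does; the remainder of your plan (building blocks $\Box$, $\idntot$, $\idbtot$, auxiliaries $\np_{1}$, $\np_{2}$, $\np_{3}$ and their conflation duals, and the closing identities) then matches the paper's argument.
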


\begin{proof}
  Suppose that each unary function in a clone above $\DMAClone$ is persistent but the clone contains a binary non-persistent function $f(x, y)$. Then $a \infleq b$ implies $f(a, c) \infleq f(b, c)$ and $f(c, a) \infleq f(c, b)$ for $c \in \{ \True, \False \}$. It follows that there are $a \infleq b$ and $c \notin \{ \True, \False \}$ such that $f(a, c) \ninfleq f(b, c)$ or $f(c, a) \ninfleq f(c, b)$. We may assume that the former is the case, taking $f(y, x)$ instead of $f(x, y)$ if necessary. The cases $c = \Neither$ and $c = \Both$ are related by conflation symmetry, we therefore only deal with the former case. That is, we assume that $f(a, \Neither) \ninfleq f(b, \Neither)$ for some $a \infleq b$. It follows that $f(a, \Neither) \ninfleq f(b, \Neither)$ for some $a$ and $b$ such that $b$ is an upper cover of $a$ in the information order. Taking $f(\dmneg x, y)$ instead of $f(x, y)$ if necessary, we may assume that either $a = \Neither$ and $b = \True$ or $a = \True$ and $b = \Both$, i.e.\ $f(\Neither, \Neither) \ninfleq f(\True, \Neither)$ or $f(\True, \Neither) \ninfleq f(\Both, \Neither)$.

  Let us start with the former case. If $f(\Neither, \Neither) \ninfleq f(\True, \Neither)$, then either $f(\Neither, \Neither) \geq \Both$ and $f(\True, \Neither) \leq \Neither$ or $f(\Neither, \Neither) \leq \Both$ and $f(\True, \Neither) \geq \Neither$. By taking $\dmneg f(x, y)$ instead of $f(x, y)$, we may assume without loss of generality that the former is the case. But then
\begin{align*}
  g(x, y) & \assign f(x, y) \wedge f(\True, y) \wedge x \wedge y \wedge \dmneg y
\end{align*}
  is a non-persistent function such that $g(\Neither, \Neither) = \False$ and $g(\True, \Neither) = \Neither$. Moreover, $g(\False, y) = g(x, \False) = g(x, \True) = \False$ and $g(\Neither, \Both) = g(\Both, \Neither) = \False$, while $g(\Both, \Both) \leq \Both$.

  It remains to settle the values of $g(\Both, \Both) \leq g(\True, \Both) \leq \Both$. If $f$ is harmonious, then so is $g$. It follows that $g(\Both, \Both) = \False$ and $g(\True, \Both) = \Both$, i.e.\ $g(x, y) = \np_{1}(x, y)$. If $g$ is not harmonious, then it coincides with $\np_{2}$ or $\np_{3}$.

  Let us now deal with the case $f(\True, \Neither) \ninfleq f(\Both, \Neither)$. Either $f(\Both, \Neither) \leq \Neither$ and $f(\True, \Neither) \geq \Both$, or $f(\Both, \Neither) \geq \Neither$ and $f(\True, \Neither) \leq \Both$. By taking $\dmneg f(x, y)$ instead of $f(x, y)$, we may assume the former without loss of generality. But then
\begin{align*}
  g(x, y) & \assign f(x \vee \dmneg x, y \vee \dmneg y) \vee y \vee \dmneg y
\end{align*}
  is a non-persistent function such that $g(\Both, \Neither) = \Neither$, $g(\True, \Neither) = \True$, $g(x, \True) = g(x, \False) = \True$, $g(\True, y) = g(\False, y)$, $g(\Neither, \Neither) \geq \Neither$, and $g(x, \Both) \geq \Both$. Let us now take
\begin{align*}
  h(x, y) & \assign g(x, y) \wedge g(\True, y) \wedge (x \vee \dmneg x)
\end{align*}
  Then $h$ must be one of the functions $\mhnp$, $\mnp_{1}$, or $\mnp_{2}$.

  Recall that we took advantage of conflation duality in the above analysis, and $\mnp_{3}, \mnp_{4}$ are the conflation duals of $\mnp_{1}, \mnp_{2}$. The case analysis therefore shows that the clone in question contains one of the functions $\Box$, $\idntot$, $\idbtot$, $\mhnp$, $\mnp_{1}$, $\mnp_{2}$, $\mnp_{3}$, $\mnp_{4}$, or $\np_{1}$, $\np_{2}$, $\np_{3}$, or the conflation duals of $\np_{2}$, $\np_{3}$. (The function $\np_{1}$ is harmonious.) It remains to show that some of these functions may be expressed in terms of others:
\begin{align*}
  \mhnp(x, y) & = (x \vee \dmneg x) \wedge \dmneg \np_{1}((x \wedge \dmneg x) \vee (y \wedge \dmneg y), y), \\
  \mnp_{1}(x, y) & = (x \vee \dmneg x) \wedge \dmneg \np_{2}((x \wedge \dmneg x) \vee (y \wedge \dmneg y), y), \\
  \mnp_{2}(x, y) & = (x \vee \dmneg x) \wedge \dmneg \np_{3}((x \wedge \dmneg x) \vee (y \wedge \dmneg y), y).
\end{align*}
  By conflation duality, $\mnp_{3}$ and $\mnp_{4}$ can be expressed in the same way using the conflation duals of $\np_{2}$ and $\np_{3}$. Furthermore,
\begin{align*}
  \mhnp(x, y) & = (x \vee \dmneg x) \wedge \dmneg \Box ((x \wedge \dmneg x) \vee (y \wedge \dmneg y)), \\
  \mnp_{1}(x, y) & = (\idntot x \wedge \idntot \dmneg x) \wedge x \wedge y \\ & \phantom{=} ~ \wedge (\dmneg \idntot x \vee \dmneg \idntot \dmneg x \vee \dmneg \idntot y \vee \dmneg \idntot \dmneg y).
\end{align*}
  By conflation duality, $\mnp_{3}$ can be expressed in the same way using $\idbtot$.
\end{proof}

\begin{figure}
\begin{center}
\begin{tabular}{r | c c c c }
  $\mhnpt(x, \Both, y)$ & $\True$ & $\False$ & $\Neither$ & $\Both$ \\
  \hline
  $\True$ & $\Both$ & $\Both$ & $\Both$ & $\Both$ \\
  $\False$ & $\False$ & $\False$ & $\False$ & $\False$ \\
  $\Neither$ & $\False$ & $\False$ & $\False$ & $\False$ \\
  $\Both$ & $\Both$ & $\Both$ & $\False$ & $\Both$
\end{tabular}

\bigskip

\begin{tabular}{r | c c c c }
  $\mnpt_{1}(x, \Both, y)$ & $\True$ & $\False$ & $\Neither$ & $\Both$ \\
  \hline
  $\True$ & $\Both$ & $\Both$ & $\Both$ & $\Both$ \\
  $\False$ & $\False$ & $\False$ & $\False$ & $\False$ \\
  $\Neither$ & $\False$ & $\False$ & $\False$ & $\False$ \\
  $\Both$ & $\Both$ & $\Both$ & $\Both$ & $\Both$
\end{tabular}
\qquad
\begin{tabular}{r | c c c c }
  $\mnpt_{2}(x, \Both, y)$ & $\True$ & $\False$ & $\Neither$ & $\Both$ \\
  \hline
  $\True$ & $\Both$ & $\Both$ & $\False$ & $\Both$ \\
  $\False$ & $\False$ & $\False$ & $\False$ & $\False$ \\
  $\Neither$ & $\False$ & $\False$ & $\False$ & $\False$ \\
  $\Both$ & $\Both$ & $\Both$ & $\False$ & $\Both$
\end{tabular}
\end{center}
\caption{Minimal non-persistent ternary functions}
\label{fig: non-persistent ternary functions}
\end{figure}

\begin{figure}
\begin{center}
\begin{tabular}{r | c c c c }
  $\npt_{1}(x, \Both, y)$ & $\True$ & $\False$ & $\Neither$ & $\Both$ \\
  \hline
  $\True$ & $\False$ & $\False$ & $\False$ & $\Both$ \\
  $\False$ & $\False$ & $\False$ & $\False$ & $\False$ \\
  $\Neither$ & $\False$ & $\False$ & $\False$ & $\False$ \\
  $\Both$ & $\False$ & $\False$ & $\False$ & $\Both$
\end{tabular}
\qquad
\begin{tabular}{r | c c c c }
  $\npt_{2}(x, \Both, y)$ & $\True$ & $\False$ & $\Neither$ & $\Both$ \\
  \hline
  $\True$ & $\False$ & $\False$ & $\False$ & $\False$ \\
  $\False$ & $\False$ & $\False$ & $\False$ & $\False$ \\
  $\Neither$ & $\False$ & $\False$ & $\False$ & $\False$ \\
  $\Both$ & $\False$ & $\False$ & $\False$ & $\False$
\end{tabular}
\end{center}
\caption{Some other non-persistent ternary functions}
\label{fig: other non-persistent ternary functions}
\end{figure}

  At the level of ternary functions, the combinatorics gets quite messy, therefore we shall rely on some computer assistance. In particular, we shall use the Baker--Pixley theorem mentioned in the introduction, which implies that $g \in \clonegen{\DMAClone, f}$ if and only if each subalgebra of $\DMfouralg \times \DMfouralg$ preserved by $f$ is also preserved by $g$. This claim about subalgebras may then be checked mechanically by a computer. (A Haskell script written for this purpose can be found on the author's webpage.)  The author has in fact already covertly been using a computer to assist him in determining which of the clones used in the proofs above are comparable and which are not.

  Let us now define the ternary functions $\mhnpt$, $\mnpt_{1}$, $\mnpt_{2}$, $\npt_{1}$, $\npt_{2}$ used in the following theorem. The function $\mhnpt$ is the unique ternary function such that
\begin{align*}
  \mhnpt(x, \True, z) & = \False, & \! \mhnpt(\Neither, \Neither, \Both) & = \False, & \! \mhnpt(x, \Neither, z) & = \False \text{ for } x \in \{ \False, \Both \}, \\
  \mhnpt(x, \False, z) & = \False, & \! \mhnpt(\True, \Neither, z) & = \Neither, & \! \mhnpt(\Neither, \Neither, z) & = \Neither \text{ for } z \in \{ \True, \Neither, \False \},
 \end{align*}
  and $\mhnpt(x, \Both, y)$ is the binary function shown in Figure~\ref{fig: non-persistent ternary functions}. The other ternary functions then agree with $\mhnpt$ everywhere except for the values shown in Figures~\ref{fig: non-persistent ternary functions} and \ref{fig: other non-persistent ternary functions}. Out of these functions only $\mhnpt$ is harmonious.

\begin{theorem}[Non-persistent clones] \label{non-persistent clones}
  A De Morgan clone above $\DMAClone$ contains a non-persistent function if and only if it contains one of the ternary functions $\mhnpt$, $\mnpt_{1}$, $\mnpt_{2}$ or the conflation dual of $\mnpt_{1}$ or $\mnpt_{2}$. A~De~Morgan clone above $\DMAClone$ contains a harmonious non-persistent function if and only if it contains $\mhnpt$.
\end{theorem}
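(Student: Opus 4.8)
The plan is to mirror the structure of the two preceding lemmas, climbing from unary through binary to ternary arity, and to discharge the genuinely ternary combinatorics by the Baker--Pixley theorem together with a finite, computer-verifiable check of preserved binary relations. The right-to-left direction is immediate: one reads off from the tables in Figures~\ref{fig: non-persistent ternary functions} and~\ref{fig: other non-persistent ternary functions} that each of $\mhnpt$, $\mnpt_{1}$, $\mnpt_{2}$ and the two conflation duals is non-persistent, and that among them only $\mhnpt$ is harmonious. Hence a clone containing any of the five contains a non-persistent function, and a clone containing $\mhnpt$ contains a harmonious one.

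For the left-to-right direction, suppose a clone $\class{C} \supseteq \DMAClone$ contains a non-persistent function $f$; I would first cut down its arity. Since the information order has least element $\Neither$, greatest element $\Both$, and the incomparable pair $\True, \False$ in between, a failure of persistence on some pair $\tuple{a} \infleq \tuple{b}$ propagates along a maximal chain to a covering pair: there are tuples differing in exactly one coordinate (whose value rises from $\Neither$ to a Boolean value, or from a Boolean value to $\Both$) on which persistence still fails. The crucial point is that only $\True$ and $\False$, not $\Neither$ or $\Both$, are available as constants of $\DMAClone$, so the remaining coordinates cannot simply be frozen. Instead I substitute the constant $\True$ (respectively $\False$) into every coordinate whose value is $\True$ (respectively $\False$), identify all coordinates of value $\Neither$ with one variable and all coordinates of value $\Both$ with another, and keep the single distinguished coordinate as its own variable. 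Evaluating the resulting function at the two witness tuples reproduces the original values, so we obtain a non-persistent function $g \in \class{C}$ of arity at most three; and since substituting $\True, \False$ and identifying variables commutes with $\dual$, the function $g$ is harmonious whenever $f$ is.

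If $g$ has arity at most two, Lemmas~\ref{unary non-persistent functions} and~\ref{binary non-persistent functions} place one of $\Box$, $\idntot$, $\idbtot$ or $\mhnp$, $\mnp_{1}$, $\dots$, $\mnp_{4}$ in $\class{C}$, and it then remains to exhibit explicit $\DMAClone$-terms (equivalently, to run the relational check below) witnessing that each of these generates one of the five target functions over $\DMAClone$, with the harmonious generators $\Box$ and $\mhnp$ yielding $\mhnpt$. The genuinely new case is arity exactly three. Here I would invoke the Baker--Pixley theorem: because $\DMAClone$ contains the majority term $(x \wedge y) \vee (y \wedge z) \vee (z \wedge x)$, a function $h$ lies in $\clonegen{\DMAClone, g}$ if and only if every subalgebra of $\DMfouralg \times \DMfouralg$ preserved by $g$ is also preserved by $h$. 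As there are only finitely many such subalgebras and only finitely many ternary functions $g$ arising from the reduction, a finite computation verifies that for each such non-persistent $g$ one of the five canonical functions preserves every binary relation preserved by $g$, and therefore lies in $\clonegen{\DMAClone, g} \subseteq \class{C}$; when $g$ is harmonious the same check forces this canonical function to be $\mhnpt$.

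The main obstacle is precisely this ternary case: at arity three the explicit hand combinatorics that sufficed for the unary and binary lemmas becomes unwieldy, which is why I would offload it to the Baker--Pixley relational criterion and a computer enumeration of the subalgebras of $\DMfouralg \times \DMfouralg$. A secondary point needing care is the bookkeeping in the reduction step, namely checking that the covering witness genuinely survives the identification of like-valued coordinates and that harmonicity is transmitted, so that the harmonious half of the statement follows by the same argument with ``one of the five'' replaced throughout by ``$\mhnpt$''.
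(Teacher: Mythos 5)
Your right-to-left direction, your arity reduction (freezing the Boolean coordinates, identifying the $\Neither$- and $\Both$-coordinates into single variables, keeping the distinguished coordinate), and your handling of arities one and two via Lemmas~\ref{unary non-persistent functions} and~\ref{binary non-persistent functions} all coincide with the paper's proof, including the observation that harmonicity survives the reduction. The gap is the genuinely ternary case, which is the actual content of the theorem and which you have not proved: the statement you propose to check by machine --- ``for each such non-persistent $g$ one of the five canonical functions preserves every binary relation preserved by $g$'' --- is just a restatement of the theorem for ternary $g$, and the enumeration you offer to justify it is not feasible. The reduction constrains only the pair of table entries $g(a,\Neither,\Both)$ and $g(b,\Neither,\Both)$; the remaining entries are arbitrary, so the ``finitely many ternary functions arising from the reduction'' number on the order of $4^{62}$. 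Baker--Pixley membership tests are feasible for concrete, individually specified pairs of functions (which is exactly how the paper uses the computer), but the sweep over all ternary non-persistent functions that your plan requires is precisely the kind of global computation the paper's introduction dismisses as impractical.

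What is missing is a hand argument that cuts the ternary case down to a short explicit list before any computer is invoked. The paper's key device, absent from your proposal, is the standing assumption that every binary function of the clone is persistent (legitimate, since otherwise Lemma~\ref{binary non-persistent functions} already applies): after normalizing to $g(x,y,z) \assign f(x,y,z)\wedge f(x,y,\dmneg z)\wedge x\wedge y\wedge\dmneg y\wedge f(\True,y,z)\wedge f(\True,y,\dmneg z)$, the persistence of the binary specializations $g(\True,y,z)$, $g(x,y,y)$ and $g(x,y,\True)$ (plus harmonicity, when $f$ is harmonious) pins down almost all entries of the table, leaving only $\mhnpt$, $\mnpt_{1}$, $\mnpt_{2}$, $\npt_{1}$, $\npt_{2}$ and their conflation duals as possibilities. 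Only then does the paper run finitely many concrete checks such as $\mnpt_{2}\in\clonegen{\DMAClone,\npt_{1}}$ and $\mhnpt\in\clonegen{\DMAClone,\mhnp}$. Without an argument of this kind, your ternary case amounts to deferring the theorem to an infeasible computation that presupposes its conclusion.
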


\begin{proof}
  Let $f$ be a non-persistent function in a clone above $\DMAClone$ such that all binary functions in this clone are persistent. Then there are $\tuple{a}$ and $\tuple{b}$ such that $\tuple{a} \infleq \tuple{b}$ and $f(\tuple{a}) \ninfleq f(\tuple{b})$. It follows that there are some $a, b \in \DMfour$ and some $\tuple{c} \subseteq \DMfour$ such that $a \infleq b$ and $f(a, \tuple{c}) \ninfleq f(b, \tuple{c})$ (rearranging the order of arguments if necessary). We may assume that $\tuple{c} \subseteq \{ \Neither, \Both \}$ by substituting $\True$ for each $y_{i}$ such that $c_{i} = \True$ and $\False$ for each $y_{i}$ such that $c_{i} = \False$ in $f(x, \tuple{y})$. Substituting $y$ for each $y_{i}$ such that $c_{i} = \Neither$ and $z$ for each $y_{i}$ such that $c_{i} = \Both$ now yields a ternary function $f(x, y, z)$ such that $f(a, \Neither, \Both) \ninfleq f(b, \Neither, \Both)$ for some $a, b \in \DMfour$ such that $a \infleq b$. We may assume without loss of generality that $b$ is a cover of $a$ in the information order.

  By taking $f(\dmneg x, y, z)$ instead of $f(x, y, z)$ if necessary, we may assume without loss of generality that either $a = \Neither$ and $b = \True$ or $a = \True$ and $b = \Both$. Since these two cases are related by conflation symmetry, we focus only on the first case. That is, we shall assume that $f(\Neither, \Neither, \Both) \ninfleq f(\True, \Neither, \Both)$.

  Clearly $f(\Neither, \Neither, \Both) \neq \Neither$ and $f(\True, \Neither, \Both) \neq \Both$. We now show that we may assume without loss of generality that $f(\Neither, \Neither, \Both) = \False$ and $f(\True, \Neither, \Both) = \Neither$: first, we may assume that $f(\Neither, \Neither, \Both) < \True$, by taking $\dmneg f(x, y, z)$ instead of $f(x, y, z)$. Moreover, if $f(\Neither, \Neither, \Both) = \Both$ and $f(\True, \Neither, \Both) \geq \Neither$, consider $f(x, y, z) \wedge y$ instead. If $f(\Neither, \Neither, \Both) = \Both$ and $f(\True, \Neither, \Both) = \False$, consider $\dmneg f(x, y, z) \wedge y$ instead. If $f(\Neither, \Neither, \Both) = \False$, then $f(\True, \Neither, \Both) \geq \Neither$, and we consider $f(x, y, z) \wedge y$ instead.

  Now consider instead of $f$ the function
\begin{align*}
  g(x, y, z) & \assign f(x, y, z) \wedge f(x, y, \dmneg z) \wedge x \wedge y \wedge \dmneg y \wedge f(\True, y, z) \wedge f(\True, y, \dmneg z).
\end{align*}
  We have $g(x, \True, z) = g(x, \False, z) = \False$ and $g(\False, y, z) = \False$ and $g(\Neither, \Both, z) = g(\Both, \Neither, z) = \False$. We also know that $g(\Neither, \Neither, \Both) = \False$ and $g(\True, \Neither, \Both) = \Neither$. It remains to settle the values of $g(\True, y, z)$ for $y \in \{ \Neither, \Both \}$ and $g(\Neither, \Neither, z)$ and $g(\Both, \Both, z)$.

  We know that $g(\Neither, \Neither, z) \leq g(\True, \Neither, z) \leq \Neither$ and $g(\Both, \Both, z) \leq g(\True, \Both, z) \leq \Both$ and moreover $g(x, y, \True) = g(x, y, \False)$. The function $g(\True, x, y)$ is persistent because each binary function in the clone in question is persistent, thus $g(\True, \Neither, \Both) = \Neither$ implies that $g(\True, \Neither, z) = \Neither$ for each~$z$. The function $g(x, y, y)$ is also persistent, thus $g(\True, \Neither, \Neither) = \Neither$ implies $g(\Neither, \Neither, \Neither) = \Neither$. The function $g(x, y, \True)$ is persistent, thus $g(\True, \Neither, \True) = \Neither$ implies $g(\Neither, \Neither, \True) = \Neither$. If $f$ is harmonious, then so is $g$, in which case the above uniquely determines $g$.
  
  To settle the values of $g(\True, \Both, z)$ and $g(\Both, \Both, z)$, recall that $g(x, y, \True) = g(x, y, \False)$ and $g(\Both, \Both, z) \leq g(\True, \Both, z) \leq \Both$. The functions $g(x, y, \True)$ and $g(x, y, y)$ are persistent, thus $g(\True, \Both, \True) \infleq g(\Both, \Both, \True)$ and $g(\True, \Both, \Both) \infleq g(\Both, \Both, \Both)$. It follows that $g(\True, \Both, \True) = g(\Both, \Both, \True)$ and $g(\True, \Both, \Both) = g(\Both, \Both, \Both)$. If $g(\Both, \Both, \Neither) = \Both$ (which must be the case if $f$ is harmonious), then $g(\Both, \Both, \True) = g(\Both, \Both, \Both) = \Both$ by the monotonicity of $g(x, x, y)$ and $\Both = g(\Both, \Both, \Neither) \leq g(\True, \Both, \Neither)$.

  Otherwise, suppose that $g(\Both, \Both, \Neither) = \False$. If $g(\True, \Both, \Neither) = \Both$, then $g(\True, \Both, \True) = g(\True, \Both, \Both) = \Both$ by the monotonicity of $g(\True, x, y)$. If on the other hand $g(\True, \Both, \Neither) = \False$, then three options remain: either $g(\True, \Both, \True) = g(\True, \Both, \Both) = \Both$, or $g(\True, \Both, \True) = \False$ and $g(\True, \Both, \Both) = \Both$, or $g(\True, \Both, \True) = g(\True, \Both, \Both) = \False$.

  The above case analysis shows that the clone in question must contain one of the binary functions $\mhnp$, $\mnp_{1}$, \dots, $\mnp_{4}$ or the ternary function $\mhnpt$ or one of the ternary functions $\mnpt_{1}$, $\mnpt_{2}$, $\npt_{1}$, $\npt_{2}$ or their conflation duals. We may now verify using a computer that in fact $\mnpt_{2}$ belongs to $\clonegen{\DMAClone, \npt_{1}}$ as well as $\clonegen{\DMAClone, \npt_{2}}$. Moreover, $\mhnpt$ lies in $\clonegen{\DMAClone, \mhnp}$, $\mnpt_{1}$ in $\clonegen{\DMAClone, \mnp_{1}}$, $\mnpt_{2}$ in $\clonegen{\DMAClone, \mnp_{2}}$, and accordingly the conflation duals of $\mnpt_{1}$ and $\mnpt_{2}$ lie in $\clonegen{\DMAClone, \mnp_{3}}$ and $\clonegen{\DMAClone, \mnp_{4}}$, respectively.
\end{proof}

  The previous sequence of results now allows us to show that $\DMAClone$ has exactly three covers in the lattice of De~Morgan clones.

\begin{theorem}[Covers of $\DMAClone$] \label{covers of dma}
  Each De~Morgan clone strictly above $\DMAClone$ contains either $\mnh_{1}$, $\mnh_{2}$, $\mhnpt$. Moreover, the clones $\clonegen{\DMAClone, \mhnpt}$ $\clonegen{\DMAClone, \mnh_{1}}$, $\clonegen{\DMAClone, \mnh_{2}}$ are incomparable by inclusion.
\end{theorem}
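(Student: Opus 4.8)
The plan is to combine the two preceding structural theorems for the first claim and then fall back on the Baker--Pixley theorem for the harder half of the second claim. Recall that by Theorem~\ref{harmonious clones}(iii) we have $\DMAClone = \clonegen{\DLatClone, \dmneg}$, which is exactly the clone of functions that are both harmonious and persistent. Hence any clone $\clone{C} \supsetneq \DMAClone$ must contain a function lying outside $\DMAClone$, i.e.\ a function that is non-harmonious or non-persistent. If $\clone{C}$ contains a non-harmonious function, then Theorem~\ref{non-harmonious clones} gives $\mnh_{1} \in \clone{C}$ or $\mnh_{2} \in \clone{C}$. Otherwise every function in $\clone{C}$ is harmonious; since $\clone{C}$ strictly extends $\DMAClone$ it then contains a harmonious function which fails to be persistent, and Theorem~\ref{non-persistent clones} yields $\mhnpt \in \clone{C}$. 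This settles the first claim.

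For incomparability I would first dispatch the four inclusions relating $\mhnpt$ to the two functions $\mnh_{i}$ using a coarse invariant. Harmonicity (preservation of $\dual$) and persistence (preservation of $\infleq$) are each preserved under composition, so $\clonegen{\DMAClone, \mhnpt}$ consists entirely of harmonious functions, while $\clonegen{\DMAClone, \mnh_{1}}$ and $\clonegen{\DMAClone, \mnh_{2}}$ consist entirely of persistent functions (one checks directly that $\mnh_{1}$, and hence by conflation its dual $\mnh_{2}$, is monotone in each argument with respect to $\infleq$). As $\mnh_{1}, \mnh_{2}$ are non-harmonious whereas $\mhnpt$ is harmonious but non-persistent, it follows immediately that neither $\mnh_{i}$ lies in $\clonegen{\DMAClone, \mhnpt}$ and that $\mhnpt$ lies in neither $\clonegen{\DMAClone, \mnh_{i}}$.

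The crux is the remaining pair of non-inclusions, between $\clonegen{\DMAClone, \mnh_{1}}$ and $\clonegen{\DMAClone, \mnh_{2}}$, where harmonicity and persistence are useless since both $\mnh_{i}$ are persistent and non-harmonious. Here I invoke the Baker--Pixley theorem: as $\DMAClone$ contains the lattice majority term, it suffices to exhibit a subalgebra of $\DMfouralg \times \DMfouralg$ preserved by $\mnh_{2}$ but not by $\mnh_{1}$. I propose the nine-element relation
\[ R = \{ \pair{\True}{\True}, \pair{\False}{\False}, \pair{\Both}{\Both}, \pair{\Neither}{\Both}, \pair{\Both}{\Neither}, \pair{\Both}{\False}, \pair{\Both}{\True}, \pair{\False}{\Both}, \pair{\True}{\Both} \}. \]
One verifies that $R$ is closed under $\wedge, \vee, \dmneg$, so it is a subalgebra; that $\mnh_{2}$ preserves $R$ (the value $\mnh_{2}(x,y)$ depends essentially only on $y$, which shortens the check); and that $\mnh_{1}$ does not, since $\mnh_{1}(\pair{\Neither}{\Both}, \pair{\Both}{\Neither}) = \pair{\False}{\Neither} \notin R$ while $\mnh_{2}(\pair{\Neither}{\Both}, \pair{\Both}{\Neither}) = \pair{\Both}{\False} \in R$. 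Thus $\mnh_{1} \notin \clonegen{\DMAClone, \mnh_{2}}$, and applying conflation coordinatewise to $R$ (which, being induced by the automorphism $\dual$, maps subalgebras to subalgebras and interchanges $\mnh_{1}$ with $\mnh_{2}$) produces the dual witness for $\mnh_{2} \notin \clonegen{\DMAClone, \mnh_{1}}$. I expect the location of this distinguishing subalgebra to be the main obstacle: naive attempts to force a two-generator witness collapse, because $\Neither \wedge \Both = \False$ together with $\dmneg \Neither = \Neither$ repeatedly drags the escaping value $\pair{\False}{\Neither}$ back into the generated subalgebra, and only a relation that avoids Boolean entries in the coordinate carrying $\Neither$ survives.

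Together the six non-inclusions show the three clones are pairwise incomparable. Finally, combining this with the first claim identifies them as precisely the covers of $\DMAClone$: any clone strictly between $\DMAClone$ and, say, $\clonegen{\DMAClone, \mnh_{1}}$ would by the first claim contain one of $\mnh_{1}, \mnh_{2}, \mhnpt$, and incomparability forces that function to be $\mnh_{1}$, whence equality.
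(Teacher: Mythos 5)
Your proposal is correct, and its first half is exactly the paper's argument: since $\DMAClone$ is the clone of harmonious persistent functions (Theorem~\ref{harmonious clones}), a proper extension contains a non-harmonious function or a harmonious non-persistent one, and Theorems~\ref{non-harmonious clones} and~\ref{non-persistent clones} then produce $\mnh_{1}$, $\mnh_{2}$, or $\mhnpt$. Where you genuinely diverge is the incomparability claim. The paper disposes of all six non-inclusions uniformly, by asserting that for each ordered pair a separating subalgebra of $\DMfouralg \times \DMfouralg$ exists and leaving the search to a computer. You instead kill four of the six by clone-theoretic invariants (the harmonious functions form a clone containing $\DMAClone$ and $\mhnpt$ but not $\mnh_{1},\mnh_{2}$; the persistent functions form a clone containing $\DMAClone$, $\mnh_{1}$ and its conflation dual $\mnh_{2}$ but not $\mhnpt$), and settle the remaining pair with an explicit nine-element relation. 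I checked your witness: $R$ contains $\pair{\True}{\True}$ and $\pair{\False}{\False}$, is closed under $\dmneg$ and $\wedge$ (hence under $\vee$), is preserved by $\mnh_{2}$ (if the second input pair is $\pair{\Neither}{\Both}$ or $\pair{\Both}{\Neither}$, the output has a $\Both$ in the corresponding coordinate and so stays in $R$; for every other second input pair the output does not depend on the first inputs), and is not preserved by $\mnh_{1}$, since $\mnh_{1}$ applied to $\pair{\Neither}{\Both}, \pair{\Both}{\Neither}$ gives $\pair{\False}{\Neither} \notin R$; the conflation trick for the symmetric non-inclusion is sound because $\dual$ is an automorphism of $\DMfouralg$, so it carries subalgebras of $\DMfouralg \times \DMfouralg$ to subalgebras and conjugates $\mnh_{1}$ into $\mnh_{2}$. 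One small remark: you do not actually need Baker--Pixley for any of this. Non-membership uses only the trivial direction that every function in $\clonegen{\DMAClone, \mnh_{2}}$ preserves every relation preserved by $\DMAClone$ and $\mnh_{2}$; Baker--Pixley is what would be needed to prove \emph{membership} from preservation. The trade-off is clear: your route is fully human-verifiable and exhibits the separating structure explicitly, while the paper's route is shorter to state and mechanically uniform across all six pairs, at the cost of computer assistance.
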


\begin{proof}
  Consider a De~Morgan clone strictly above $\DMAClone$. Because $\DMAClone$ is the clone of all persistent harmonious De~Morgan functions by Theorem~\ref{harmonious clones}, this clone contains either a non-harmonious function or a harmonious non-persistent function. The former case is covered by our description of non-harmonious clones (Theorem~\ref{non-harmonious clones}). The latter case is covered our description of harmonious non-persistent clones (Theorem~\ref{non-persistent clones}). It remains to show that the three clones are incomparable by inclusion. To this end, it suffices to exhibit a subalgebra of $\DMfouralg \times \DMfouralg$ which is closed under $\mnh_{1}$ but not under $\mnh_{2}$, and likewise for the other five pairs. In each case such a subalgebra exists, although the task of finding it is best left to a computer.
\end{proof}

\begin{corollary}
 $\DMAClone$ has exactly three covers in the lattice of De~Morgan clones, namely $\clonegen{\DMAClone, \mnh_{1}}$, $\clonegen{\DMAClone, \mnh_{2}}$, and $\clonegen{\DMAClone, \mhnpt}$.
\end{corollary}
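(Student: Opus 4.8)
The plan is to derive this corollary directly from Theorem~\ref{covers of dma}, which already carries out all the substantive work; what remains is pure order-theoretic bookkeeping. First I would record that each of the three clones strictly contains $\DMAClone$: the functions $\mnh_{1}$ and $\mnh_{2}$ are non-harmonious, while $\mhnpt$ is harmonious but non-persistent, so none of them lies in $\DMAClone$ (recall that $\DMAClone$ is the clone of all persistent harmonious functions by Theorem~\ref{harmonious clones}). Hence $\DMAClone \subsetneq \clonegen{\DMAClone, \mnh_{1}}$, and likewise for the other two.

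Next I would show that each listed clone is in fact a cover. Consider any clone $\clone{C}$ with $\DMAClone \subsetneq \clone{C} \subseteq \clonegen{\DMAClone, \mnh_{1}}$. By the first part of Theorem~\ref{covers of dma}, $\clone{C}$ contains at least one of $\mnh_{1}$, $\mnh_{2}$, $\mhnpt$. If it contained $\mnh_{2}$, then $\clonegen{\DMAClone, \mnh_{2}} \subseteq \clone{C} \subseteq \clonegen{\DMAClone, \mnh_{1}}$, and similarly containing $\mhnpt$ would give $\clonegen{\DMAClone, \mhnpt} \subseteq \clonegen{\DMAClone, \mnh_{1}}$; both possibilities contradict the incomparability asserted in the second part of Theorem~\ref{covers of dma}. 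Therefore $\clone{C}$ contains $\mnh_{1}$, whence $\clone{C} = \clonegen{\DMAClone, \mnh_{1}}$, so there is no clone strictly between $\DMAClone$ and $\clonegen{\DMAClone, \mnh_{1}}$. The same argument with the roles of the three functions permuted shows that all three clones are covers.

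Finally, I would verify that there are no further covers. If $\clone{C}$ is any cover of $\DMAClone$, then $\DMAClone \subsetneq \clone{C}$, so by Theorem~\ref{covers of dma} the clone $\clone{C}$ contains one of the three functions, say $\mnh_{1}$. Then $\DMAClone \subsetneq \clonegen{\DMAClone, \mnh_{1}} \subseteq \clone{C}$, and since $\clone{C}$ covers $\DMAClone$ there is nothing strictly between them, forcing $\clone{C} = \clonegen{\DMAClone, \mnh_{1}}$. Thus every cover is one of the three, and the incomparability guarantees they are pairwise distinct, yielding exactly three covers.

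There is no genuine obstacle at this stage: the case analyses identifying the minimal non-harmonious and harmonious non-persistent functions, together with the computer-assisted search for separating subalgebras of $\DMfouralg \times \DMfouralg$ establishing incomparability, have already been discharged in Theorem~\ref{covers of dma}. The only point deserving care is to keep track of the difference between a clone \emph{strictly above} $\DMAClone$ and a clone \emph{covering} $\DMAClone$, and to invoke the incomparability precisely where it is needed to exclude the two ``wrong'' generators in each containment.
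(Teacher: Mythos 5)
Your proof is correct and follows essentially the same route as the paper, which states this corollary without a separate proof precisely because it is the standard order-theoretic consequence of Theorem~\ref{covers of dma} that you spell out: the first part of that theorem shows every proper extension of $\DMAClone$ contains one of the three generated clones, and the incomparability in the second part ensures each of the three is a cover and that they are distinct. Your bookkeeping (including the observation that the three generators lie outside $\DMAClone$ because $\DMAClone$ consists exactly of the harmonious persistent functions) is accurate and complete.
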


\section{Discriminator clones}
\label{sec: discriminator}

  We now use the results of the previous section to describe the lattice of all De~Morgan clones above $\DMAClone$ which contain the discriminator function. Recall that the \emph{quaternary discriminator} on a given set is the quaternary function $d(x, y, z, u)$ such that
\begin{align*}
  d(x, y, z, u) & = \begin{cases} z \text{ if } x = y, \\ u \text{ if } x \neq y. \end{cases}
\end{align*}
  The De~Morgan discriminator is known to be inter\-definable with $\Box x$ over the clone~$\DMAClone$: clearly $\Box x = d(x, \True, \True, \False)$. Conversely, 
\begin{align*}
  d(x, y, z, u) & = ((x \equivcrisp y) \wedge z) \vee (\dmneg (x \equivcrisp y) \wedge u),
\end{align*}
  where
\begin{align*}
  x \equivcrisp y & = \Box (x \wedge y) \vee \Box (\dmneg x \wedge \dmneg y) \vee (\dmneg \Box (x \vee y) \wedge \dmneg \Box (\dmneg x \vee \dmneg y)).
\end{align*}
  Observe that $a \equivcrisp b = \True$ if $a = b$ and $a \equivcrisp b = \False$ if $a \neq b$. Our goal is therefore to describe the lattice of De~Morgan clones above $\clonegen{\DMAClone, \Box}$.

  In order to describe this lattice, we first need to consider one more clone which lies between $\clonegen{\DMAClone, \Box}$ and $\clonegen{\DMAClone, \Delta}$.

\begin{definition}[Partially harmonious functions]
  A De Morgan function $f$ is \emph{partially harmonious} if $f(\dual \tuple{a}) = \dual f(\tuple{a})$ whenever $\tuple{a} \subseteq \Kthree$ or $\tuple{a} \subseteq \Pthree$.
\end{definition}

  An example of a partially harmonious function which is not harmonious is the binary function $\Deltanbpair(x, y)$ such that
\begin{align*}
  \Deltanbpair(a, b) = \True \text{ if } a = \Neither \text{ and } b = \Both, & & \Deltanbpair(a, b) = \False \text{ otherwise}.
\end{align*}
  Beware that the set of all partially harmonious De~Morgan functions does not form a clone: the functions $\Deltanbpair(x, y)$, $\Box x$, $\dual x$, $x \vee y$ are partially harmonious, but $\Delta x = \Box x \vee \Deltanbpair(\dual x, x)$ is not. However, partially harmonious functions which moreover preserve $\Btwo$, $\Kthree$, and $\Pthree$ do form a clone.

\begin{figure}
\begin{center}
\begin{tikzpicture}[scale=1.5]
  \node (Box) at (0,0) {$\Box$};
  \node (Deltanb) at (0,1) {$\Box, \hbox to 1em{$\Deltanbpair$\hss}$};
  \node (Delta) at (0,2) {$\Delta$};
  \node (idbton) at (-1,3) {$\Box, \idbton$};
  \node (idntob) at (1,3) {$\Box, \idntob$};
  \node (n) at (-1,4) {$\Box, \Neither$};
  \node (b) at (1,4) {$\Box, \Both$};
  \node (Deltadual) at (0,4) {$\Delta, \dual$};
  \node (top) at (0,5) {$\Box, \Neither, \Both$};
  \node (delta) at (-1,2) {$\dual$};
  \draw[-] (Box) -- (Deltanb) -- (Delta) -- (idbton) -- (n) -- (top);
  \draw[-] (Delta) -- (idntob) -- (b) -- (top);
  \draw[-] (idbton) -- (Deltadual) -- (top);
  \draw[-] (idntob) -- (Deltadual);
  \draw[-] (Box) -- (delta) -- (Deltadual);
\end{tikzpicture}
\end{center}
\caption{De Morgan clones above $\DMAClone$ which contain the De~Morgan discriminator}
\label{fig: discriminator clones}
\end{figure}
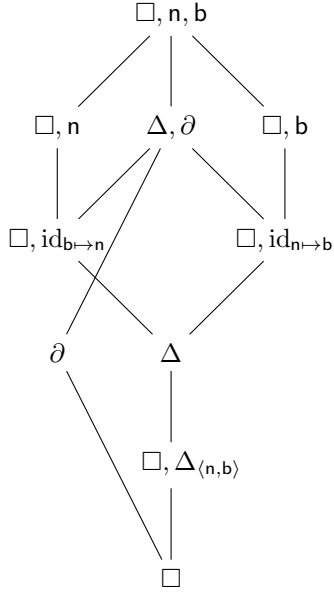

\begin{theorem}[The partially harmonious subalg.-preserving clone] \label{partially harmonious clone}
  The clone $\clonegen{\DMAClone, \Box, \Deltanbpair}$ is the clone of all partially harmonious De Morgan functions which preserve $\Btwo$, $\Kthree$, and $\Pthree$.
\end{theorem}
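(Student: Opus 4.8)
The plan is to prove both inclusions. The left-to-right inclusion is the routine direction: I would verify that each generator lies in the class of partially harmonious $\Btwo$-, $\Kthree$-, $\Pthree$-preserving functions, and that this class is closed under composition. The generators of $\DMAClone$ (namely $\wedge, \vee, \True, \False, \dmneg$) are harmonious, hence partially harmonious, and they preserve all three subalgebras; the function $\Box$ preserves $\Btwo$, $\Kthree$, $\Pthree$ and is partially harmonious (indeed $\Box$ is its own conflation dual, and one checks $\Box \dual a = \dual \Box a$ on $\Kthree$ and on $\Pthree$); and $\Deltanbpair$ is partially harmonious by the definition given, since it is constantly $\False$ on $\Kthree$ and on $\Pthree$, while it preserves the three subalgebras because its only non-$\False$ value occurs at the argument $\pair{\Neither}{\Both}$, which lies in none of $\Btwo^2$, $\Kthree^2$, $\Pthree^2$. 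The key point the paper has already flagged is that this class \emph{is} a clone (unlike the class of all partially harmonious functions), so closure under composition holds and the inclusion $\clonegen{\DMAClone, \Box, \Deltanbpair} \subseteq$ (the class) follows.

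For the harder right-to-left inclusion, I would follow the same decomposition strategy used throughout the section: express an arbitrary partially harmonious $\Btwo$-, $\Kthree$-, $\Pthree$-preserving function $f$ as a join $\bigvee f_{\tuple{a}, b}$ of the smallest functions in the clone taking the prescribed value $b \in \{\True, \Neither, \Both\}$ at $\tuple{a}$. Since $f$ preserves all three subalgebras, I only need generators $f_{\tuple{a}, b}$ for tuples $\tuple{a}$ and values $b$ that are actually compatible with $f$ being partially harmonious and subalgebra-preserving; this restricts which $\tuple{a}$ can map to $\Neither$ or $\Both$. The natural building block is the indicator-type function $f_{\tuple{a}, \True}$ which singles out $\{\tuple{a}\}$ (or $\{\tuple{a}, \dual\tuple{a}\}$), expressible via $\Box$, $\Delta$, $\nabla$ as in the proof of Theorem~\ref{clones preserving subalgebras}. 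The essential new work is to express the functions that produce the value $\Neither$ or $\Both$ in a way that respects partial harmony: when $\tuple{a} \subseteq \Kthree$ or $\tuple{a} \subseteq \Pthree$, partial harmony forces the value at $\dual\tuple{a}$, so the relevant generators coincide with the harmonious ones from Theorem~\ref{harmonious clones}(i) and live already in $\clonegen{\DMAClone, \Box}$; but when $\tuple{a}$ contains both $\Neither$ and $\Both$, partial harmony imposes \emph{no} constraint linking $\tuple{a}$ to $\dual\tuple{a}$, and here is exactly where $\Deltanbpair$ is needed to isolate such mixed tuples.

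The main obstacle I anticipate is step three for the mixed tuples: showing that the minimal partially-harmonious subalgebra-preserving function $f_{\tuple{a}, \Neither}$ (respectively $f_{\tuple{a}, \Both}$) associated with a tuple $\tuple{a}$ containing both $\Neither$ and $\Both$ can be written using $\Deltanbpair$ together with $\Box$ and the De~Morgan operations. The idea is that $\Deltanbpair(x_i, x_j)$ detects the pattern $\pair{\Neither}{\Both}$ at a pair of coordinates where $a_i = \Neither$ and $a_j = \Both$, and this pattern is precisely what partial harmony leaves unconstrained; conjoining such a detector with the indicator function $f_{\tuple{a}, \True}$ and with suitable conjuncts producing the value $\Neither$ (via $\wedge \Neither$, expressible once we can reach $\Neither$ at the right tuples) should yield the desired minimal function. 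I would organize the argument by cases according to whether $\tuple{a} \subseteq \Kthree$, or $\tuple{a} \subseteq \Pthree$, or $\tuple{a}$ meets both $\Neither$ and $\Both$, reducing the first two cases to the already-established harmonious case and handling only the third with $\Deltanbpair$; closing the loop by expressing $f$ as the standard join then completes the proof.
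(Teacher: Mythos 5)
Your overall skeleton matches the paper's proof: split the tuples $\tuple{a}$ according to whether they contain both $\Neither$ and $\Both$, reduce the non-mixed case to the minimal harmonious functions of Theorem~\ref{harmonious clones}, which already lie in $\clonegen{\DMAClone, \Box}$, handle the mixed case with $\Deltanbpair$, and finish with the standard join decomposition. However, your execution of the mixed case has a genuine gap: both concrete devices you propose pass through functions that are provably \emph{not} in $\clonegen{\DMAClone, \Box, \Deltanbpair}$. First, you say the indicator $f_{\tuple{a}, \True}$ is ``expressible via $\Box$, $\Delta$, $\nabla$ as in the proof of Theorem~\ref{clones preserving subalgebras}''; but $\Delta$ (hence also $\nabla$) is not partially harmonious -- this is exactly the paper's remark immediately preceding the theorem, and one checks directly that $\Delta \dual \Neither = \True \neq \False = \dual \Delta \Neither$ with $\Neither \in \Kthree$ -- so by the easy inclusion you yourself established in your first paragraph, $\Delta \notin \clonegen{\DMAClone, \Box, \Deltanbpair}$, and that expression is unavailable here. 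Second, you propose to produce the value $\Neither$ ``via $\wedge\, \Neither$''; the constant $\Neither$ does not preserve $\Btwo$, so it too lies outside the clone, and the hedge ``expressible once we can reach $\Neither$ at the right tuples'' does not repair this.

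The missing idea, which is the actual content of the paper's proof, is that for a mixed tuple $\tuple{a}$ the conjuncts $\Box x_{i}$ (for $a_{i} = \True$), $\Box \dmneg x_{i}$ (for $a_{i} = \False$), and $\Deltanbpair(x_{i}, x_{j})$ (for each pair with $a_{i} = \Neither$ and $a_{j} = \Both$) already pin down $\tuple{a}$ exactly, with no $\Delta$ or $\nabla$ needed:
\begin{align*}
  f_{\tuple{a}, \True}(\tuple{x}) & = \bigwedge_{a_{i} = \True} \Box x_{i} ~ \wedge ~ \bigwedge_{a_{i} = \False} \Box \dmneg x_{i} ~ \wedge ~ \bigwedge_{\pair{a_{i}}{a_{j}} = \pair{\Neither}{\Both}} \Deltanbpair(x_{i}, x_{j}).
\end{align*}
Indeed, if $x_{i} \neq a_{i}$ with $a_{i}$ Boolean, a $\Box$-conjunct fails; if $x_{i} \neq a_{i}$ with $a_{i} = \Neither$ (resp.\ $a_{i} = \Both$), then pairing $i$ with some $j$ where $a_{j} = \Both$ (resp.\ $a_{j} = \Neither$), which exists precisely because $\tuple{a}$ is mixed, makes the corresponding $\Deltanbpair$-conjunct equal to $\False$. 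The values $\Neither$ and $\Both$ are then obtained not from constants but from the variables themselves: $f_{\tuple{a}, \Neither}(\tuple{x}) = f_{\tuple{a}, \True}(\tuple{x}) \wedge x_{i}$ for any $i$ with $a_{i} = \Neither$, and $f_{\tuple{a}, \Both}(\tuple{x}) = f_{\tuple{a}, \True}(\tuple{x}) \wedge x_{j}$ for any $j$ with $a_{j} = \Both$. With these expressions your plan goes through; without them, the step you label ``the essential new work'' is exactly what remains unproved.
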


\begin{proof}
  If $\{ \Neither, \Both \} \subseteq \tuple{a}$, then the smallest partially harmonious function $f_{\tuple{a}, b}$ which preserves $\Btwo$, $\Kthree$, $\Pthree$ such that $f_{\tuple{a}, b}(\tuple{a}) = b$ is in fact the smallest De~Morgan function $f_{\tuple{a}, b}$ such that $f_{\tuple{a}, b}(\tuple{a}) = b$, namely
\begin{align*}
  f_{\tuple{a}, b}(\tuple{x}) = b & \text{ if } \tuple{x} = \tuple{a}, \\
  f_{\tuple{a}, b}(\tuple{x}) = \False & \text{ otherwise}.
\end{align*}
  It is easy to see that $f_{\tuple{a}, \True}$ can be expressed as
\begin{align*}
  f_{\tuple{a}, \True}(\tuple{x}) = \smashoperator{\bigwedge_{a_{i} = \True}} \Box x_{i} ~ \wedge ~ \smashoperator{\bigwedge_{a_{i} = \False}} \Box \dmneg x_{i} ~ \wedge ~ \smashoperator{\bigwedge_{\pair{a_{i}}{a_{j}} = \pair{\Neither}{\Both}}} \Deltanbpair(x_{i}, x_{j}),
\end{align*}
  Moreover, $f_{\tuple{a}, \Neither}(\tuple{x}) = f_{\tuple{a}, \True}(\tuple{x}) \wedge x_{i}$ whenever $a_{i} = \Neither$ and $f_{\tuple{a}, \Both}(\tuple{x}) = f_{\tuple{a}, \True}(\tuple{x}) \wedge x_{i}$ whenever $a_{i} = \Both$.

  If $\{ \Neither, \Both \} \nsubseteq \tuple{a}$, then the smallest partially harmonious function $f_{\tuple{a}, b}$ such that $f_{\tuple{a}, b}(\tuple{a}) = b$ is the smallest harmonious function $f$ such that $f(\tuple{a}) = b$, provided that $\Neither \in \tuple{a}$ if $b = \Neither$ and $\Both \in \tuple{a}$ if $b = \Both$. But we have already proved that these functions belong to $\clonegen{\DMAClone, \Box}$ (see the proof of Theorem~\ref{harmonious clones}).
\end{proof}

  For reasons of space, only the generating sets are shown in Figure~\ref{fig: discriminator clones}. That is, the node labelled $\Delta, \dual$ represents $\clonegen{\Delta, \dual}$ etc.

\begin{theorem}[Discriminator clones above $\DMAClone$] \label{discriminator clones}
  The lattice of De Morgan clones above $\DMAClone$ which contain the De~Morgan discriminator is precisely the finite lattice shown in Figure \ref{fig: discriminator clones}.
\end{theorem}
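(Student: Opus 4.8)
The plan is to describe every De~Morgan clone $\clone{C} \supseteq \clonegen{\DMAClone, \Box}$ by two invariants: the set $P \subseteq \{ \Btwo, \Kthree, \Pthree \}$ of subalgebras that $\clone{C}$ preserves, and whether the functions of $\clone{C}$ are all harmonious, all partially harmonious, or neither. Since $\DLatClone$ supplies the median operation, the Baker--Pixley theorem guarantees that each such clone is determined by the subalgebras of $\DMfouralg \times \DMfouralg$ it preserves, so there are only finitely many and they form a finite lattice; the task is to show that these two invariants pin $\clone{C}$ down to one of the ten clones in the figure. Recalling that preserving $\Kthree$ and $\Pthree$ forces preserving $\Btwo$, the admissible profiles are $\emptyset$, $\{ \Btwo \}$, $\{ \Kthree \}$, $\{ \Pthree \}$, $\{ \Btwo, \Kthree \}$, $\{ \Btwo, \Pthree \}$, and $\{ \Btwo, \Kthree, \Pthree \}$.

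First I would identify, for each profile $P$, the clone $M_P$ of \emph{all} functions preserving the subalgebras in $P$, using Theorem~\ref{clones preserving subalgebras} together with short computations reconciling the $\Box$- and $\Delta$-presentations in the figure. For instance $\Box = \Delta x \wedge \dmneg \Delta (x \wedge \dmneg x)$ shows $\Box \in \clonegen{\DMAClone, \Delta}$, while $\Delta x = \Box x \vee \dmneg (x \equivcrisp \idbton x)$ shows $\Delta \in \clonegen{\DMAClone, \Box, \idbton}$ and $\Delta x = \dmneg \nabla \dmneg x$ (with $\nabla$ recovered from $\Box$ and $\Neither$) shows $\Delta \in \clonegen{\DMAClone, \Box, \Neither}$; hence $M_{\{ \Btwo, \Kthree \}} = \clonegen{\DMAClone, \Box, \idbton}$, $M_{\{ \Kthree \}} = \clonegen{\DMAClone, \Box, \Neither}$, and symmetrically for $\Pthree$. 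The profile $\emptyset$ gives the clone of all De~Morgan functions, which equals $\clonegen{\DMAClone, \Box, \Neither, \Both}$ because the discriminator and the two constants define $\dual$ by cases.

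Next, for a fixed $\clone{C}$ of profile $P$ I would invoke Theorem~\ref{clones not preserving subalgebras} to see which breaking functions $\clone{C}$ must contain, observing that the profile $P$ itself excludes all but one admissible breaker for each broken subalgebra. Concretely, a clone of profile $\{ \Btwo, \Kthree \}$ breaks $\Pthree$ yet cannot contain $\Neither$ (which breaks $\Btwo$) or $\dual$ (which breaks $\Kthree$), so it contains $\idbton$ and therefore equals $M_{\{ \Btwo, \Kthree \}}$; the profiles $\{ \Btwo, \Pthree \}$, $\{ \Kthree \}$, $\{ \Pthree \}$ are handled identically, and for $\emptyset$ any two compatible breakers generate both constants $\Neither$ and $\Both$ (e.g.\ $\idntob \Neither = \Both$). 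This settles five of the seven profiles outright.

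The remaining work, and the main obstacle, is the two profiles for which the subalgebra invariant alone fails to determine the clone. For $\{ \Btwo, \Kthree, \Pthree \}$ we have $\clone{C} \subseteq \clonegen{\DMAClone, \Delta}$, and by Theorem~\ref{harmonious clones}(iv) and Theorem~\ref{partially harmonious clone} the clone $\clone{C}$ is $\clonegen{\DMAClone, \Box}$, $\clonegen{\DMAClone, \Box, \Deltanbpair}$, or all of $\clonegen{\DMAClone, \Delta}$ according to its harmonicity type --- provided one shows that a subalgebra-preserving clone containing a non-partially-harmonious function must contain $\Delta$, and one containing a non-harmonious but partially harmonious function must contain $\Deltanbpair$. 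For $\{ \Btwo \}$ I would first check that $\clone{C}$ must contain $\dual$ (breaking $\Kthree$ forces $\idntob$ or $\dual$, breaking $\Pthree$ forces $\idbton$ or $\dual$, and $\idntob$ together with $\idbton$ already defines $\dual$ by cases via the discriminator), so $\clonegen{\DMAClone, \dual} \subseteq \clone{C} \subseteq \clonegen{\DMAClone, \Delta, \dual}$; then $\clone{C}$ is $\clonegen{\DMAClone, \dual}$ if harmonious and, if it contains a non-harmonious function, all of $\clonegen{\DMAClone, \Delta, \dual}$. Each of these three ``minimal function'' claims --- that the relevant non-harmonious or non-partially-harmonious functions generate $\Delta$ or $\Deltanbpair$ over the appropriate base clone --- is exactly of the type established in Theorems~\ref{non-harmonious clones} and \ref{non-persistent clones}, and I would verify them there via the Baker--Pixley theorem by checking the relevant subalgebras of $\DMfouralg \times \DMfouralg$ on a computer. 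The same subalgebra computations finally supply the separating relations confirming that the inclusions are precisely those drawn in Figure~\ref{fig: discriminator clones}.
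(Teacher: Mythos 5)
Your overall strategy is sound and is a genuine reorganization of the paper's argument: where the paper fixes each meet-irreducible clone of Figure~\ref{fig: discriminator clones} and characterizes when a clone $\clone{C} \supseteq \clonegen{\DMAClone,\Box}$ fails to lie below it, you fix the invariants (preservation profile, harmonicity type) and determine which clones realize them. Your treatment of the five profiles $\emptyset$, $\{\Kthree\}$, $\{\Pthree\}$, $\{\Btwo,\Kthree\}$, $\{\Btwo,\Pthree\}$ via forced breakers (Theorem~\ref{clones not preserving subalgebras}) is correct, and so is your treatment of profile $\{\Btwo\}$ and of the harmonious and partially harmonious cases of the full profile: there Theorem~\ref{non-harmonious clones} really does supply the reduction to the two specific functions $\mnh_{1}$, $\mnh_{2}$, after which only finitely many concrete membership claims remain (e.g.\ $\Deltanbpair \in \clonegen{\DMAClone, \Box, \mnh_{1}}$, and $\Delta x = \Box x \vee \Deltanbpair(\dual x, x)$ once $\dual$ is available), and those are legitimately checkable by a Baker--Pixley computation or by explicit formulas as in the paper.

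The gap is the remaining case of the full profile: the claim that a clone above $\clonegen{\DMAClone,\Box}$ preserving $\Btwo$, $\Kthree$, $\Pthree$ and containing a non-partially-harmonious function must contain $\Delta$. This is \emph{not} ``of the type established'' by the theorems you cite, and your proposed verification does not reach it. Theorem~\ref{non-harmonious clones} only yields $\mnh_{1}$ or $\mnh_{2}$, and both of these functions are themselves partially harmonious and preserve all three subalgebras, so they give you $\Deltanbpair$ and nothing more; Theorem~\ref{non-persistent clones} is moot here because $\Box$ is already non-persistent. Nor can a Baker--Pixley computer check close the case: such a check decides whether a \emph{specific} function lies in a \emph{specific} finitely generated clone, whereas this claim quantifies over all non-partially-harmonious functions of all arities -- it must exclude a stray clone strictly between $\clonegen{\DMAClone,\Box,\Deltanbpair}$ and $\clonegen{\DMAClone,\Delta}$ generated by some high-arity function, and knowing merely that your clone properly contains $\clonegen{\DMAClone,\Box,\Deltanbpair}$ presupposes the lattice you are computing. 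What is needed is precisely the paper's closing argument: a witnessing tuple $\tuple{a}$ for failure of partial harmonicity omits $\Neither$ or $\Both$, so substituting $\True$, $\False$ for its Boolean coordinates and one variable for the remaining coordinates produces a unary non-harmonious function; Lemma~\ref{unary non-harmonious functions} then gives $\Truenton$ or $\Truebtob$ (the constants being excluded by $\Btwo$-preservation), and finally
\begin{align*}
  \Delta x & = (x \vee \dmneg x) \wedge \Box \Truenton x = \Box x \vee \dmneg \Box \Truebtob x.
\end{align*}
With this arity-reduction step inserted, your classification into the ten clones of Figure~\ref{fig: discriminator clones} is complete.
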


\begin{proof}
  We have already observed at the beginning of this section that a De~Morgan clone above $\DMAClone$ contains the discriminator function if and only if it contains $\Box x$. Let us first show that the inclusions indicated in the figure hold. We know that $\Box x = x \wedge \dual x = \Delta x \wedge \nabla x$, where $\nabla x = \dmneg \Delta \dmneg x$, and clearly every De~Morgan function can be expressed using $\Delta$, $\Neither$, and $\Both$. Moreover,
\begin{align*}
  \Delta x & = \Box (x \vee \idbton x) = \Box (x \vee \Neither), \\
  \Delta x & = \Diamond (x \wedge \idntob x) = \Diamond (x \wedge \Both), \\
  \idbton x & = (\dmneg \Delta x \wedge x) \vee (\nabla x \wedge x) \vee (\Delta x \wedge \Delta \dmneg x \wedge \Neither), \\
  \idntob x & = (\Delta x \wedge x) \vee (\Delta \dmneg x \wedge x) \vee (\nabla x \wedge \nabla \dmneg x \wedge \Both), \\
  \Deltanbpair(x, y) & = \nabla x \wedge \dmneg \Delta x \wedge \Delta y \wedge \dmneg \nabla y.
\end{align*}

  Secondly, let us show that the figure faithfully depicts meets and joins. To prove the former, it suffices to recall the characterization of clones preserving subalgebras (Theorem~\ref{clones preserving subalgebras}) and the fact that $\clonegen{\DMAClone, \Box}$ is the clone of all harmonious functions preserving $\Btwo$, $\Kthree$, $\Pthree$ (Theorem~\ref{harmonious clones}). To prove the latter, it suffices to observe that $\Neither = \idbton \Both$ and $\Both = \idntob \Neither$ and moreover 
\begin{align*}
  \Delta x & = \Box x \vee \Deltanbpair(\dual x, x), \\
  \dual x & = x \vee (\Delta x \wedge \Delta \dmneg x \wedge \idbton x) \vee (\nabla x \wedge \nabla \dmneg x \wedge \idntob x).
\end{align*}

  Finally, to show that there are no more clones above $\clonegen{\DMAClone, \Box}$, it~suffices to find for each clone $\clone{D}$ in the figure some clones $\clone{D}_{1}$, \dots, $\clone{D}_{n}$ in the figure such that for each De~Morgan clone $\clone{C} \supseteq \clonegen{\DMAClone, \Box}$ we have $\clone{C} \nsubseteq \clone{D}$ if and only if $\clone{D}_{i} \subseteq \clone{C}$ for some $\clone{D}_{i}$. Of course, it suffices to prove this for clones $\clone{D}$ which are meet irreducible in the figure. It will then follow that each proper extension of a clone in the figure lies above one of finitely many covers and all of these covers lie in the lattice shown in the figure. 

  If $\clone{C} \nsubseteq \clonegen{\DMAClone, \Delta, \dual}$, then $\clone{C}$ does not preserve $\Btwo$ by Theorem~\ref{clones preserving subalgebras}, so it contains $\Neither$~or~$\Both$ by Theorem~\ref{clones not preserving subalgebras}. Similarly, if $\clone{C} \nsubseteq \clonegen{\DMAClone, \Box, \Neither}$, then $\clone{C}$ does not preserve $\Kthree$ by Theorem~\ref{clones preserving subalgebras}, so it contains $\Both$ or $\idntob$ or $\dual$ by Theorem~\ref{clones not preserving subalgebras}. By conflation symmetry, if $\clone{C} \nsubseteq \clonegen{\DMAClone, \Box, \Both}$, then it contains $\idbton$ or $\dual$.

 If $\clone{C} \nsubseteq \clonegen{\DMAClone, \dual}$, then $\clone{C}$ is not harmonious, therefore $\clone{C}$ contains one of the functions $\Neither$, $\Both$, $\Truenton$, $\Truebtob$, $\nh_{1}$, \dots, $\nh_{6}$ by Theorem~\ref{non-harmonious clones}. But $\Delta$ is expressible in terms of $\Box$ and any of the functions $\Neither$, $\Both$, $\Truenton$, $\Truebtob$:
\begin{align*}
  \Delta x & = \Box (\Neither \vee x) = \dmneg \Box (\Both \vee \dmneg x) = (x \vee \dmneg x) \wedge \Box \Truenton x = \Box x \vee \dmneg \Box \Truebtob x.
\end{align*}
  Moreover, the binary function $\Deltanbpair$ is expressible in terms of $\Box$ and any of the functions $\nh_{1}$, \dots, $\nh_{6}$ as follows:
\begin{align*}
 \Deltanbpair(x, y) & = \Diamond \nh_{1}(y, x) \\
  & = \Diamond \nh_{2}(y, x) \wedge \dmneg (x \equivcrisp y) \\
  & = \Diamond \nh_{3}(y, x) \wedge \dmneg (x \equivcrisp y) \wedge \Diamond (x \wedge \dmneg x) \\
  & = \Diamond \nh_{4}(y, x) \\
  & = \Diamond \nh_{5}(y, x) \wedge \dmneg (x \equivcrisp y) \\
  & = \Diamond \nh_{6}(y, x) \wedge \dmneg (x \equivcrisp y) \wedge \Diamond (y \wedge \dmneg y).
\end{align*}
  Because $\Deltanbpair$ is expressible in terms of $\Delta$, the clone $\clone{C}$ thus contains $\Deltanbpair$.

  If $\clone{C} \nsubseteq \clonegen{\DMAClone, \Box, \Deltanbpair}$, then either $\clone{C}$ does not preserve $\Kthree$ or $\Pthree$ (in which case it contains $\idbton$ or $\idntob$ or $\dual$ by Theorem~\ref{clones not preserving subalgebras}) or $\clone{C}$ contains a function $f$ which is not partially harmonious. That is, $\dual f(\tuple{a}) \neq f(\dual \tuple{a})$ for some $\tuple{a}$ such that $\{ \Neither, \Both \} \nsubseteq \tuple{a}$. Suppose without loss of generality that $\Both \notin \tuple{a}$ (the other case is analogous). Let us substitute the constant $\True$ for $x_{i}$ if $a_{i} = \True$, the constant $\False$ for $x_{i}$ if $a_{i} = \False$, and the variable $x$ for $x_{i}$ if $a_{i} = \Neither$. This yields either one of the constants $\Neither$ or $\Both$ or a unary non-harmonious function. It~follows by Lemma~\ref{unary non-harmonious functions} that $\clone{C}$ contains $\Neither$ or $\Both$ or $\Truenton$ or $\Truebtob$. But $\Delta$ is expressible in terms of $\Box$ and any of these functions, as shown above.
\end{proof}

\section{Classifying De Morgan clones}
\label{sec: classification}

  We now discuss the properties of expansions of the four-valued Belnap--Dunn logic which correspond to De Morgan clones above $\DMAClone$. In particular, we classify these expansions by their position in the Leibniz and Frege hierarchies of abstract algebraic logic~\cite{font16}.

  One way to introduce the relevant logical notions would be to assign an algebra to each De Morgan clone $\clone{C}$ in a signature which contains an $n$-ary function symbol for each $n$-ary function $f \in \clone{C}$ (or perhaps only for functions in some given generating set of $\clone{C}$), interpreted by the function~$f$. Equipped with the truth predicate $\{ \True, \Both \}$, i.e.\ the truth predicate of Belnap–Dunn logic, this algebra determines a logic. We now call the clone $\clone{C}$ \emph{selfextensional}, \emph{truth-equational}, \emph{protoalgebraic}, or \emph{equivalential} if the corresponding logic is. (Here we disregard Fregean and fully Fregean clones because none of the clones above $\DMAClone$ are Fregean~\cite{font97}. We also disregard fully selfextensional clones because each selfextensional clone over a finite set containing a conjunction is in fact fully selfextensional~\cite{jansana06}.)

  This definition will suffice for readers who are already familiar with these notions in the context of abstract algebraic logic. For other readers, we phrase the definition directly in terms of clones.

\begin{definition}[Clone matrices and logics]
  A \emph{clone matrix} is a pair $\pair{\clone{C}}{F}$ consisting of a clone on a set $A$ and a set $F \subseteq A$. The \emph{logic} $\logic{L}$ determined by the clone matrix $\pair{\clone{C}}{F}$ is a binary relation between subsets of $\clone{C}$, denoted $\Gamma(\tuple{x}) \vdash_{\logic{L}} \Delta(\tuple{x})$, such that $\Gamma(\tuple{x}) \vdash_{\logic{L}} \Delta(\tuple{x})$ if and only if
\begin{align*}
  \Gamma(\tuple{a}) \subseteq F \implies \Delta(\tuple{a}) \subseteq F \text{ for each tuple } \tuple{a} \in A.
\end{align*}
  We use $\Gamma(\tuple{x}) \interdash_{\logic{L}} \Delta(\tuple{x})$ to abbreviate the conjunction of $\Gamma(\tuple{x}) \vdash_{\logic{L}} \Delta(\tuple{x})$ and $\Delta(\tuple{x}) \vdash_{\logic{L}} \Gamma(\tuple{x})$.
\end{definition}

\begin{definition}[Selfextensional logics]
  A logic $\logic{L}$ is called \emph{selfextensional} if $f(\tuple{x}) \interdash_{\logic{L}} g(\tuple{x})$ implies $h(\tuple{x}, f(\tuple{y}), \tuple{z}) \interdash h(\tuple{x}, g(\tuple{y}), \tuple{z})$ for all $f, g, h \in \clone{C}$ such that $f$ and $g$ have the same arity.
\end{definition}

\begin{definition}[Protoalgebraic logics]
  A logic $\logic{L}$ is called \emph{proto\-algebraic} if there is a set $\Delta(x, y) \subseteq \clone{C}$ such that $\emptyset \vdash_{\logic{L}} \Delta(x, x)$ and $x, \Delta(x, y) \vdash_{\logic{L}} y$. Such a set $\Delta$ is called a \emph{protoimplication set}. If $\Delta = \{ f \}$, then the function~$f$ is called a \emph{protoimplication}.
\end{definition}

\begin{definition}[Equivalential logics]
  A logic $\logic{L}$ is called \emph{equivalential} if there is a protoimplication set $\Delta(x, y)$ so that
\begin{align*}
  \Delta(x_{1}, y_{1}), \dots, \Delta(x_{n}, y_{n}) \vdash_{\logic{L}} \Delta(f(x_{1}, \dots, x_{n}), f(y_{1}, \dots, y_{n}))
\end{align*}
  for each $n$-ary function $f$ in $\clone{C}$. Such a set $\Delta$ is called an \emph{equivalence set}. If~$\Delta = \{ f \}$, then the function $f$ is called an \emph{equivalence function}.
\end{definition}

  The definition of truth-equationality is somewhat more complicated, as we have to consider arbitrary models of the logic determined by the clone~$\clone{C}$. (More precisely, as pointed out by the referee, it suffices to consider the $1$-generated models.) We first recall the notion of a \emph{homomorphism of clones} $h\colon \clone{C} \to \clone{C}'$: it is a function which maps $n$-ary functions of $\clone{C}$ to $n$-ary functions of $\clone{C}'$ such that composition of functions is preserved and projections onto the $i$-th component in $\clone{C}$ map to projections onto the $i$-th component in $\clone{C}'$.

\begin{definition}[Models of logics]
  A \emph{model} of the logic $\logic{L}$ of the clone matrix $\pair{\clone{C}}{F}$ is a clone matrix $\pair{\clone{C}'}{F'}$ along with a surjective homomorphism of clones $h\colon \clone{C} \to \clone{C}'$ which preserves the consequence relation: $\Gamma(\tuple{x}) \vdash_{\logic{L}} \Delta(\tuple{x})$ implies $h[\Gamma](\tuple{x}) \vdash_{\logic{L}'} h[\Delta](\tuple{x})$ for each $\Gamma(\tuple{x}), \Delta(\tuple{x}) \subseteq \clone{C}$, where $\logic{L}'$ is the logic determined by the clone matrix $\pair{\clone{C}'}{F'}$.
\end{definition}

  The surjectivity requirement here corresponds to the fact that each function in $\clone{C}'$ is to be thought of as the interpretation of some function in $\clone{C}$.

\begin{definition}[The Leibniz congruence]
  The \emph{Leibniz congruence} of $F \subseteq A$, denoted $\Leibniz{\clone{C}}{F}$, is defined as follows: $\pair{a}{b} \in \Leibniz{\clone{C}}{F}$ if and only if
\begin{align*}
  f(a, \tuple{c}) \in F \iff f(b, \tuple{c}) \in F \text{ for each } f \in \clone{C} \text{ and each tuple } \tuple{c} \in A.
\end{align*}
\end{definition}

\begin{definition}[Truth-equational and algebraizable logics]
  The logic $\logic{L}$ is called \emph{truth-equational} if there is a set of equations (pairs of functions) $t_{i}(x) \approx u_{i}(x)$ for $i \in I$ such that for each model $\pair{\clone{C}}{F}$ of $\logic{L}$ we have
\begin{align*}
  a \in F \iff \pair{t_{i}(a)}{u_{i}(a)} \in \Leibniz{\clone{C}}{F} \text{ for all } i \in I.
\end{align*}
  The logic $\logic{L}$ is called \emph{(weakly) algebraizable} if it is both equivalential (proto\-algebraic) and truth-equational.
\end{definition}

  In the following, we restrict to De~Morgan clones with the designated values $\{ \True, \Both \}$. That is, we take $A \assign \{ \True, \False, \Neither, \Both \}$ and $F \assign \{ \True, \Both \}$. Abusing our terminology somewhat, we call the De~Morgan clone $\clone{C}$ selfextensional (proto\-algebraic, etc.) if the logic determined by $\pair{\clone{C}}{F}$ is selfextensional (proto\-algebraic, etc.). We simply write $\Gamma(\tuple{x}) \vdash \Delta(\tuple{x})$ and $\Gamma(\tuple{x}) \interdash \Delta(\tuple{x})$ if the clone~$\clone{C}$ is clear from the context. That is, $\Gamma(\tuple{x}) \vdash \Delta(\tuple{x})$ if and only if $\Gamma(\tuple{a}) \subseteq \{ \True, \Both \}$ implies $\Delta(\tuple{a}) \subseteq \{ \True, \Both \}$ for each tuple $\tuple{a} \in \DMfour$, where $\Gamma(\tuple{x}), \Delta(\tuple{x}) \subseteq \clone{C}$.

  The general definitions given above can be simplified for De~Morgan clones above $\DMAClone$ as follows. The reader who has not fully digested the definitions above may in effect take these to be our working definitions of self\-extensionality, protoalgebraicity, etc.

\begin{fact}
  A De~Morgan clone above $\DMAClone$ is selfextensional if and only if for all functions $f$ and $g$ in the clone $f \interdash g$ implies $f = g$.
\end{fact}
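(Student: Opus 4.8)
The plan is to prove the two implications separately, with the right-to-left direction being essentially trivial and the left-to-right direction resting entirely on the Truth and Falsity Lemma (Lemma~\ref{truth and falsity lemma}). For the easy direction, suppose that $f \interdash g$ implies $f = g$ for all $f, g \in \clone{C}$. Then the defining condition for selfextensionality holds in a strong sense: whenever $f(\tuple{x}) \interdash g(\tuple{x})$ we already have $f = g$, so $h(\tuple{x}, f(\tuple{y}), \tuple{z})$ and $h(\tuple{x}, g(\tuple{y}), \tuple{z})$ are literally the same function and are therefore interderivable. Hence $\clone{C}$ is selfextensional.

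For the converse, I would assume that $\clone{C}$ is selfextensional and that $f \interdash g$ for some $f, g \in \clone{C}$, and aim to show $f = g$. First I observe that $f \interdash g$ says exactly that $f$ and $g$ have the same truth conditions, i.e.\ $f^{-1} \{ \True, \Both \} = g^{-1} \{ \True, \Both \}$. The key step is to recover the falsity conditions as well. Since $\clone{C}$ lies above $\DMAClone$, the De~Morgan negation $\dmneg$ belongs to $\clone{C}$, so I may instantiate the selfextensionality condition with $h \assign \dmneg$ (taking both $\tuple{x}$ and $\tuple{z}$ to be empty), which yields $\dmneg f \interdash \dmneg g$. Now $\dmneg c \in \{ \True, \Both \}$ if and only if $c \in \{ \False, \Both \}$, so $(\dmneg f)^{-1} \{ \True, \Both \} = f^{-1} \{ \False, \Both \}$ and likewise for $g$. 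Thus $\dmneg f \interdash \dmneg g$ says precisely that $f$ and $g$ have the same falsity conditions.

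Having shown that $f$ and $g$ agree both on their truth conditions and on their falsity conditions, I would conclude $f = g$ by the uniqueness part of the Truth and Falsity Lemma (Lemma~\ref{truth and falsity lemma}), which guarantees that an $n$-ary De~Morgan function is uniquely determined by this pair of sets. The only genuine content of the argument is this negation trick: De~Morgan negation converts truth conditions into falsity conditions, so a single application of selfextensionality through $\dmneg$ suffices to pin down the falsity conditions, after which the Truth and Falsity Lemma finishes the proof. Everything else is routine.
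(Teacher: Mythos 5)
Your proposal is correct and follows essentially the same route as the paper: the forward direction applies selfextensionality with $h \assign \dmneg$ to get $\dmneg f \interdash \dmneg g$, so that $f$ and $g$ share both truth and falsity conditions and are equal by the Truth and Falsity Lemma (Lemma~\ref{truth and falsity lemma}), while the easy direction is just the observation that equal functions yield identical (hence interderivable) compositions, which the paper states contrapositively.
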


\begin{proof}
  If $\clone{C} \supseteq \DMAClone$ is selfextensional and $f \interdash g$ for some functions $f$ and $g$ in $\clone{C}$, then $\dmneg f \interdash \dmneg g$ by selfextensionality, hence $f$ and $g$ have the same truth and falsity conditions. By the Truth and Falsity Lemma (Lemma~\ref{truth and falsity lemma}) they are therefore equal. Conversely, if $\clone{C}$ is not selfextensional, then there are $f, g, h \in \clone{C}$ such that $f(\tuple{x}) \interdash g(\tuple{x})$ but not $h(\tuple{x}, f(\tuple{y})) \interdash h(\tuple{x}, g(\tuple{y}))$, hence clearly $f \neq g$.
\end{proof}

\begin{fact}
  A De Morgan clone above $\DMAClone$ is protoalgebraic if and only if it contains a binary function $x \imp y$ such that
\begin{align*}
  & x \imp x \in \{ \True, \Both \} & & \text{and} & & x, x \imp y \in \{ \True, \Both \} \implies y \in \{ \True, \Both \}.
\end{align*}
\end{fact}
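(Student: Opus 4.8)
The plan is to prove the two directions separately. The right-to-left direction is immediate, while the left-to-right direction proceeds by collapsing a protoimplication \emph{set} into a single protoimplication \emph{function} using the conjunction available in every clone above $\DMAClone$.

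For the backward direction, if the clone contains a binary function $x \imp y$ with the two stated properties, then the singleton $\Delta(x, y) = \{ x \imp y \}$ is by definition a protoimplication set, witnessing both $\emptyset \vdash \Delta(x, x)$ and $x, \Delta(x, y) \vdash y$. Hence the clone is protoalgebraic and nothing more is needed.

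For the forward direction, suppose $\clone{C} \supseteq \DMAClone$ is protoalgebraic with a protoimplication set $\Delta(x, y) \subseteq \clone{C}$. First I would observe that $\Delta$ is finite, since it consists of binary functions and there are only finitely many binary De~Morgan functions, and that $\Delta \neq \emptyset$, since otherwise the rule $x, \Delta(x, y) \vdash y$ would read $x \vdash y$, which fails for the instance $x = \True$, $y = \False$. Writing $\Delta = \{ \delta_{1}, \dots, \delta_{n} \}$ with $n \geq 1$, I define
\begin{align*}
  x \imp y := \delta_{1}(x, y) \wedge \dots \wedge \delta_{n}(x, y),
\end{align*}
which lies in $\clone{C}$ because $\wedge \in \DMAClone \subseteq \clone{C}$.

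The key point, and really the only substantive step, is that the designated set $F = \{ \True, \Both \}$ is a prime filter of the truth lattice, as recorded in the preliminaries. Consequently a truth-lattice meet lies in $F$ if and only if every one of its arguments lies in $F$. Reflexivity then follows at once: since each $\delta_{i}(a, a) \in F$ by $\emptyset \vdash \Delta(x, x)$, the meet $a \imp a$ lies in $F$ as well. Modus ponens follows in the same way: if $a \in F$ and $a \imp b = \delta_{1}(a, b) \wedge \dots \wedge \delta_{n}(a, b) \in F$, then each $\delta_{i}(a, b) \in F$ by the filter property, whence $b \in F$ by the rule $x, \Delta(x, y) \vdash y$. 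Thus $x \imp y$ has both required properties. I do not expect any serious obstacle here: the construction works precisely because conjunction is present in every clone above $\DMAClone$ and because the Belnap--Dunn designated set is a lattice filter; the only points requiring a word of care are the finiteness and nonemptiness of $\Delta$, both dispatched above.
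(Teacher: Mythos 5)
Your proof is correct and follows essentially the same route as the paper: both collapse a protoimplication set $\Delta(x,y)$ into a single protoimplication by taking a finite conjunction and using that $\{\True,\Both\}$ is a filter of the truth lattice. The only (immaterial) difference is how finiteness of the conjunction is secured -- the paper conjoins one witness $\delta_{a,b}$ for each of the finitely many pairs $a \in \{\True,\Both\}$, $b \notin \{\True,\Both\}$, whereas you conjoin all of $\Delta$, which is finite simply because there are only finitely many binary De~Morgan functions.
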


\begin{proof}
  Such functions are precisely the De~Morgan protoimplications. Now suppose that a De~Morgan clone above $\DMAClone$ has a protoimplication set $\Delta(x, y)$. For each pair $a \in \{ \True, \Both \}$ and $b \notin \{ \True, \Both \}$ pick some $\delta_{a,b}(x, y) \in \Delta(x, y)$ such that $\delta_{a,b}(a, b) \notin \{ \True, \Both \}$. Such a function exists because $\Delta(x, y)$ is a protoimplication set. Then the conjunction of all these finitely many functions is a protoimplication.
\end{proof}

\begin{fact}
  A De Morgan clone above $\DMAClone$ has is equivalential if and only if it contains a binary function $x \biimp y$ such that $x \biimp y \in \{ \True, \Both \} \iff x = y$.
\end{fact}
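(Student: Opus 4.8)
The plan is to prove both directions, the forward one being routine and the converse resting on the observation that the matrix $\pair{\DMfouralg}{\{\True,\Both\}}$ is Leibniz-reduced for every clone above $\DMAClone$.

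For the ``if'' direction, suppose $\clone{C}$ contains a binary function $x \biimp y$ with $x \biimp y \in \{\True, \Both\} \iff x = y$. I claim that $\{x \biimp y\}$ is an equivalence set, which makes $\clone{C}$ equivalential. Reflexivity $\emptyset \vdash x \biimp x$ holds since $a = a$ gives $a \biimp a \in \{\True, \Both\}$; modus ponens $x, x \biimp y \vdash y$ holds because $x \biimp y \in \{\True, \Both\}$ forces $x = y$; and the replacement condition $x_1 \biimp y_1, \dots, x_n \biimp y_n \vdash f(\tuple{x}) \biimp f(\tuple{y})$ holds because the premises being designated means $x_i = y_i$ for all $i$, whence $f(\tuple{x}) = f(\tuple{y})$ and so $f(\tuple{x}) \biimp f(\tuple{y}) \in \{\True, \Both\}$.

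For the ``only if'' direction, I first compute $\Leibniz{\clone{C}}{F}$ for $F = \{\True, \Both\}$. Since $\clone{C} \supseteq \DMAClone$, it contains the projections and the De~Morgan negation $\dmneg$. Taking $f$ to be a projection in the definition of the Leibniz congruence shows that $\pair{a}{b} \in \Leibniz{\clone{C}}{F}$ implies $a \in F \iff b \in F$; taking $f = \dmneg$ shows it implies $\dmneg a \in F \iff \dmneg b \in F$, i.e.\ $a$ is false iff $b$ is false. As an element of $\DMfour$ is determined by whether it is true and whether it is false, these two equivalences force $a = b$. Hence $\Leibniz{\clone{C}}{F}$ is the identity relation $\set{\pair{a}{a}}{a \in \DMfour}$.

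Now let $\Delta(x, y)$ be an equivalence set for $\clone{C}$. It is a finite set of binary functions, finiteness coming from the finiteness of the clone; it is moreover nonempty, since an empty $\Delta$ would make modus ponens read $x \vdash y$, which fails. Put $g(x, y) \assign \bigwedge_{\delta \in \Delta} (\delta(x, y) \wedge \delta(y, x)) \in \clone{C}$. If $a = b$, then each $\delta(a, a) \in F$ by reflexivity, and since $F$ is a filter of the truth lattice, $g(a, a) \in F$. Conversely, suppose $g(a, b) \in F$. As $F$ is upward closed and $g(a, b)$ lies below each conjunct, we get $\Delta(a, b) \subseteq F$ and $\Delta(b, a) \subseteq F$. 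Apply the replacement property to an arbitrary $h \in \clone{C}$ with $a, b$ in one argument slot and fixed parameters $\tuple{c}$ in the remaining slots, discharging the premises $\Delta(c_j, c_j) \subseteq F$ by reflexivity (the clone is closed under permuting arguments, so the slot is immaterial); this yields $\Delta(h(a, \tuple{c}), h(b, \tuple{c})) \subseteq F$, and modus ponens then gives $h(a, \tuple{c}) \in F \implies h(b, \tuple{c}) \in F$. From $\Delta(b, a) \subseteq F$ we obtain the reverse implication in the same way, so $h(a, \tuple{c}) \in F \iff h(b, \tuple{c}) \in F$ for all $h$ and $\tuple{c}$, i.e.\ $\pair{a}{b} \in \Leibniz{\clone{C}}{F}$, whence $a = b$. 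Thus $g(a, b) \in F \iff a = b$, and $g$ is the desired function.

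The one delicate point is obtaining $\Delta(a, b) \subseteq F \implies a = b$ without first establishing that equivalence sets are symmetric. Symmetrizing $g$ through the extra conjuncts $\delta(y, x)$ sidesteps this, letting the reducedness of the matrix supply the remaining half of the argument.
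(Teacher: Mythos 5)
Your proof is correct, but your converse takes a genuinely different route from the paper's. Both arguments extract a single symmetric function from the equivalence set by forming the conjunction $\bigwedge_{\delta \in \Delta}(\delta(x,y) \wedge \delta(y,x))$ (legitimate, since there are only finitely many binary functions on $\DMfour$ and $\{\True,\Both\}$ is an up-closed filter of the truth lattice), and both ultimately rest on the fact that an element of $\DMfour$ is determined by whether it is true and whether it is false. The difference is in how ``designated implies equal'' is derived. The paper argues concretely: if $a \neq b$ then either $a$ and $b$ differ in truth, in which case modus ponens plus symmetry gives $a \biimp b \notin \{\True,\Both\}$, or $\dmneg a$ and $\dmneg b$ do, in which case the replacement property applied only to the single function $\dmneg$ transfers the failure from $\dmneg a \biimp \dmneg b$ back to $a \biimp b$. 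You instead first show that $\Leibniz{\clone{C}}{\{\True,\Both\}}$ is the identity for every clone $\clone{C}$ above $\DMAClone$ (using projections and $\dmneg$), and then use the full replacement property, for arbitrary $h$ and parameters, together with modus ponens, to show that $\Delta(a,b) \cup \Delta(b,a) \subseteq \{\True,\Both\}$ forces $\pair{a}{b} \in \Leibniz{\clone{C}}{\{\True,\Both\}}$. Your route is the standard abstract-algebraic-logic argument (designated equivalence sets land in the Leibniz congruence, and the matrix is reduced), and it would generalize verbatim to any reduced finite clone matrix whose filter is an up-closed set closed under a definable conjunction; the paper's route is more economical, invoking replacement only for $\dmneg$ and never mentioning the Leibniz operator. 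One small inaccuracy: the finiteness of $\Delta$ comes from the finiteness of the set of \emph{binary} functions on $\DMfour$, not from ``the finiteness of the clone'' --- the clone itself is infinite, containing functions of every arity.
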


\begin{proof}
  Such a function is an equivalence function. Conversely, suppose that a De~Morgan clone above $\DMAClone$ has an equivalence set. As in the previous fact, this implies that it has an equivalence function $x \biimp y$. We may assume that $x \biimp y = y \biimp x$ by taking the equivalence function $(x \biimp y) \wedge (y \biimp x)$ instead of $x \biimp y$. Then $x = y$ implies $x \biimp y \in \{ \True, \Both \}$ by the definition of a protoimplication. Conversely, if $x \neq y$, then we have one of four possibilities:
\begin{enumerate}[(i)]
\item $x \in \{ \True, \Both \}$ and $y \notin \{ \True, \Both \}$,
\item $x \notin \{ \True, \Both \}$ and $y \in \{ \True, \Both \}$,
\item $\dmneg x \in \{ \True, \Both \}$ and $\dmneg y \notin \{ \True, \Both \}$,
\item $\dmneg x \notin \{ \True, \Both \}$ and $\dmneg y \in \{ \True, \Both \}$.
\end{enumerate}
  In (i) and (ii) we have either $x \biimp y \notin F$ or $y \biimp x \notin F$ by the definition of a protoimplication, so $x \biimp y \notin F$ by the assumed symmetry of $\biimp$. In (iii) and (iv) we have either $\dmneg x \biimp \dmneg y \notin F$ or $\dmneg y \biimp \dmneg x \notin F$, so again $\dmneg x \biimp \dmneg y \notin F$. But this implies that $x \biimp y \notin F$ by the definition of an equivalence function.
\end{proof}

  Equivalence functions are not required to be symmetric, but we can always transform a De~Morgan equivalence function $x \biimp y$ into a symmetric equivalence function by taking $(x \biimp y) \wedge (y \biimp x)$ instead.

  Let us now describe the set of all De~Morgan protoimplications. Four important protoimplications are defined in Figure~\ref{fig:min max implication}. They can be described more succinctly as follows: the function $x \imptmax y$ ($x \impimax y$) is the unique De~Morgan function $x \rightarrow y$ with the range $\{ \True, \Neither \}$ (range $\{ \False, \Both \}$) such that
\begin{align*}
  a \rightarrow b \in \{ \True, \Both \} \text{ if and only if } a \in \{ \True, \Both \} \implies b \in \{ \True, \Both \}.
\end{align*}
  The function $x \imptmin y$ ($x \impimin y$), on the other hand, is the unique De Morgan function $x \rightarrow y$ with the range $\{ \False, \Both \}$ (range $\{ \True, \Neither \}$) such that
\begin{align*}
  a \rightarrow b \in \{ \True, \Both \} \text{ if and only if } a = b.
\end{align*}
  Taking the range to be $\{ \True, \False \}$ instead then yields $x \impcrisp y$ and $x \equivcrisp y$.

\begin{figure}
\begin{center}
\begin{tabular}{c | c c c c }
  $\imptmax$ & $\True$ & $\False$ & $\Neither$ & $\Both$ \\
  \hline
  $\True$ & $\True$ & $\Neither$ & $\Neither$ & $\True$ \\
  $\False$ & $\True$ & $\True$ & $\True$ & $\True$ \\
  $\Neither$ & $\True$ & $\True$ & $\True$ & $\True$ \\
  $\Both$ & $\True$ & $\Neither$ & $\Neither$ & $\True$
\end{tabular}
\qquad
\begin{tabular}{c | c c c c }
  $\impimax$ & $\True$ & $\False$ & $\Neither$ & $\Both$ \\
  \hline
  $\True$ & $\Both$ & $\False$ & $\False$ & $\Both$ \\
  $\False$ & $\Both$ & $\Both$ & $\Both$ & $\Both$ \\
  $\Neither$ & $\Both$ & $\Both$ & $\Both$ & $\Both$ \\
  $\Both$ & $\Both$ & $\False$ & $\False$ & $\Both$
\end{tabular}

\bigskip

\begin{tabular}{c | c c c c }
  $\imptmin$ & $\True$ & $\False$ & $\Neither$ & $\Both$ \\
  \hline
  $\True$ & $\Both$ & $\False$ & $\False$ & $\False$ \\
  $\False$ & $\False$ & $\Both$ & $\False$ & $\False$ \\
  $\Neither$ & $\False$ & $\False$ & $\Both$ & $\False$ \\
  $\Both$ & $\False$ & $\False$ & $\False$ & $\Both$
\end{tabular}
\qquad
\begin{tabular}{c | c c c c }
  $\impimin$ & $\True$ & $\False$ & $\Neither$ & $\Both$ \\
  \hline
  $\True$ & $\True$ & $\Neither$ & $\Neither$ & $\Neither$ \\
  $\False$ & $\Neither$ & $\True$ & $\Neither$ & $\Neither$ \\
  $\Neither$ & $\Neither$ & $\Neither$ & $\True$ & $\Neither$ \\
  $\Both$ & $\Neither$ & $\Neither$ & $\Neither$ & $\True$
\end{tabular}
\end{center}
\caption{The minimal and maximal protoimplications}
\label{fig:min max implication}
\end{figure}

  The lattice relations between the four protoimplications of Figure~\ref{fig:min max implication} are analogous to the relations between the elements $\True$, $\False$, $\Neither$, $\Both$, namely:
\begin{align*}
  x \imptmin y & = (x \impimax y) \wedge (x \impimin y), \\
  x \imptmax y & = (x \impimax y) \vee (x \impimin y), \\
  x \impimin y & = (x \imptmax y) \infwedge (x \imptmin y), \\
  x \impimax y & = (x \imptmax y) \infvee (x \imptmin y).
\end{align*}
  In fact, we now show that these four functions form the extreme points of the set of De~Morgan protoimplications in the truth and information orders (under the pointwise ordering of De~Morgan functions).

\begin{theorem}[Smallest and largest protoimplications]
  A binary De Morgan function is a protoimplication if and only if it lies in the interval $[x \imptmin y, x \imptmax y]$ in the truth order, or equivalently if and only if it lies in the interval $[x \impimin y, x \impimax y]$ in the information order.
\end{theorem}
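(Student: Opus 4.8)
The plan is to reduce the protoimplication conditions to a family of pointwise constraints on the values of $f$ and to recognize that each such constraint set is simultaneously an interval in the truth order and in the information order. By the second Fact above, $f(x,y)$ is a protoimplication exactly when it satisfies reflexivity, $f(a,a) \in \{\True,\Both\}$ for every $a$, together with modus ponens, which in contrapositive form reads $f(a,b) \notin \{\True,\Both\}$ whenever $a \in \{\True,\Both\}$ and $b \notin \{\True,\Both\}$. Both conditions constrain only one coordinate $f(a,b)$ at a time, so $f$ is a protoimplication iff for each pair $(a,b)$ the value $f(a,b)$ lies in the set $S_{a,b}$, where $S_{a,a} = \{\True,\Both\}$, where $S_{a,b} = \{\False,\Neither\}$ for $a \in \{\True,\Both\}$ and $b \notin \{\True,\Both\}$, and $S_{a,b} = \DMfour$ for all remaining pairs. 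The whole theorem thus amounts to checking that $S_{a,b} = [a\imptmin b, a\imptmax b]$ in the truth order and $S_{a,b} = [a\impimin b, a\impimax b]$ in the information order, for every pair.

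First I would carry out the truth-order check, reading the endpoints off the defining descriptions: $a\imptmin b \in \{\True,\Both\}$ iff $a = b$ (range $\{\False,\Both\}$) and $a\imptmax b \in \{\True,\Both\}$ iff $a\in\{\True,\Both\} \Rightarrow b\in\{\True,\Both\}$ (range $\{\True,\Neither\}$). On the diagonal this gives $a\imptmin a = \Both$ and $a\imptmax a = \True$; for $a\in\{\True,\Both\}$, $b\notin\{\True,\Both\}$ it gives $\False$ and $\Neither$; and for the remaining pairs it gives $\False$ and $\True$. Then I would repeat this verbatim in the information order using $\impimin$ and $\impimax$, whose defining descriptions differ only by interchanging the two lattices (as encoded in the four lattice identities stated just before the theorem): the diagonal yields $\True$ and $\Both$, the pair $a\in\{\True,\Both\}$, $b\notin\{\True,\Both\}$ yields $\Neither$ and $\False$, and the remaining pairs yield $\Neither$ and $\Both$.

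It remains to recognize each endpoint pair as bounding the correct set, and here the one conceptual point is best isolated as follows: $F = \{\True,\Both\}$ is a prime filter of \emph{both} the truth lattice and the information lattice, and hence is an interval in each, namely $[\Both,\True]$ in the truth order and $[\True,\Both]$ in the information order; dually its complement $\{\False,\Neither\}$ is a prime ideal of both lattices, so it is the interval $[\False,\Neither]$ in the truth order and $[\Neither,\False]$ in the information order; and $\DMfour = [\False,\True] = [\Neither,\Both]$ trivially in the respective orders. Matching these against the endpoints computed above shows $[a\imptmin b, a\imptmax b] = S_{a,b} = [a\impimin b, a\impimax b]$ for every $(a,b)$, whence a binary De~Morgan function is a protoimplication iff it lies in the truth-order interval $[x\imptmin y, x\imptmax y]$ iff it lies in the information-order interval $[x\impimin y, x\impimax y]$. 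There is no substantive obstacle here beyond the bookkeeping; the prime-filter observation is what makes the two interval descriptions fall out of a single case analysis rather than two separate ones.
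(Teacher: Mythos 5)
Your proof is correct and is in essence the paper's own argument: both reduce protoimplicationhood and interval membership to pointwise conditions at each pair $(a,b)$ and match the resulting constraint set ($\{\True,\Both\}$ on the diagonal, $\{\False,\Neither\}$ on the modus-ponens pairs, all of $\DMfour$ elsewhere) against the values of the four bounding implications. The only organizational difference is that the paper proves the truth-order equivalence and then dispatches the information-order one by citing truth--information symmetry, whereas you verify both orders in a single case analysis, unified by the observation that $\{\True,\Both\}$ and $\{\False,\Neither\}$ are intervals in both lattices.
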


\begin{proof}
  If the function $x \imp y$ belongs to the interval $[x \imptmin y, x \imptmax y]$ in the truth order, then $\True \imp \False \leq \True \imptmax \False \notin \{ \True, \Both \}$ and $\True \imp \False \leq \True \imptmax \False \notin \{ \True, \Both \}$ and $a \imp a \geq a \imptmin a \in \{ \True, \Both \}$, therefore $x \imp y$ is a protoimplication.

  Conversely, suppose that $x \imp y$ is a protoimplication. To prove that $x \imp y \leq x \imptmax y$, it suffices to show that $\True \imp \False \leq \True \imptmax \False = \Neither$ and $\True \imp \Neither \leq \True \imptmax \Neither = \Neither$. But this holds because $\True \imp \False \notin \{ \True, \Both \}$ and $\True \imp \Neither \notin \{ \True, \Both \}$. To prove that $x \imp y \geq x \imptmin y$, it suffices to show that $a \imp a \geq a \imptmin a = \Both$. But this holds because $a \imp a \in \{ \True, \Both \}$.

  The second claim now follows by truth--information symmetry.
\end{proof}

  In addition to the protoimplications introduced above, it is also natural to consider the \emph{G\"{o}del implication} $x \impG y$ such that
\begin{align*}
  a \impG b = \begin{cases} \True & \text{ if } a \leq b, \\  b & \text{ if } a \nleq b.\end{cases}
\end{align*}
  This operation might be also called \emph{Boolean implication}, since it is implication associated with the four-element Boolean lattice reduct of $\DMfouralg$.

\begin{proposition}[Interdefinability of protoimplications] \label{prop: interdefinability of protoimplications}
  The following operations are interdefinable over $\DMAClone$:
\begin{enumerate}[\rm(i)]
\item $x \imptmax y$ and $x \impimin y$,
\item $x \impimax y$ and $x \imptmin y$,
\item $x \equivcrisp y$, $x \impG y$, and $\Box x$,
\item $x \impcrisp y$ and $\Delta x$.
\end{enumerate}
\end{proposition}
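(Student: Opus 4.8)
The plan is to prove each of the four interdefinability claims by exhibiting explicit De~Morgan terms, built from the single listed operation together with the operations of $\DMAClone$, that express the remaining operations; since interdefinability is transitive, for each item it suffices to close the listed family into a cycle. The natural temptation is to quote the lattice identities recorded just before the proposition, e.g.\ $x \impimin y = (x \imptmax y) \infwedge (x \imptmin y)$, but these are \emph{unavailable} here: $\infwedge$ and $\infvee$ are not De~Morgan term operations (they require the constants $\Neither, \Both$, which $\DMAClone$ does not contain), so every term I write must use only $\wedge, \vee, \dmneg, \True, \False$ and the one given function.

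I would dispatch (iii) and (iv) first. For (iii), note $\Box x = x \equivcrisp \True = \dmneg x \impG \False$, which recovers $\Box$ from either $\equivcrisp$ or $\impG$; conversely both $\equivcrisp$ and $\impG$ are harmonious functions preserving $\Btwo, \Kthree, \Pthree$, hence lie in $\clonegen{\DMAClone, \Box}$ by Theorem~\ref{harmonious clones}(iv) (with $\equivcrisp$ given by the explicit expression recorded at the start of Section~\ref{sec: discriminator} and $x \impG y = (x \equivcrisp (x \wedge y)) \vee y$). For (iv), $\Delta x = \True \impcrisp x$ recovers $\Delta$ from $\impcrisp$, while conversely $x \impcrisp y = \dmneg \Delta x \vee \Delta y$, so that $\clonegen{\DMAClone, \impcrisp} = \clonegen{\DMAClone, \Delta}$.

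The substance is in (i) and (ii). For the forward direction of (i) I would use that $\imptmax$ tests truth-content, $a \imptmax b = \True$ exactly when $\Delta a \leq \Delta b$, and that $\dmneg$ interchanges truth- and falsity-content; since an element is determined by the pair (is-true, is-false), equality is the conjunction of four such comparisons:
\[
  x \impimin y = (x \imptmax y) \wedge (y \imptmax x) \wedge (\dmneg x \imptmax \dmneg y) \wedge (\dmneg y \imptmax \dmneg x),
\]
all four factors lying in $\{\True, \Neither\}$ with $\True$ the truth-order top, so the meet is $\True$ iff $x = y$ and $\Neither$ otherwise.

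The reverse direction of (i) is where the structure of the problem bites, and this is the main obstacle. The clone $\clonegen{\DMAClone, \impimin}$ preserves $\Kthree$ (as do both $\DMAClone$ and $\impimin$), so the constant $\Both$ is \emph{not} available and no term may test $x = \Both$ directly; one must detect truth-content solely through the permitted equality tests against $\True, \False, \Neither$, where $\Neither = \True \impimin \False$ is itself definable. Concretely I would set $D(x) = (x \impimin \False) \vee (x \impimin \Neither)$, so that $D(x) \in \{\True, \Neither\}$ equals $\True$ iff $x \in \{\False, \Neither\}$, and then take
\[
  x \imptmax y = D(x) \vee (D(y) \impimin \Neither),
\]
which returns $\Neither$ exactly on the four inputs where $x$ is true and $y$ is not, as required. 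Claim (ii) is then handled by the mirror-image construction in the interval $\{\False, \Both\}$ of the truth order, using $\Both = \True \imptmin \True$ and scrupulously avoiding $\Neither$ (unavailable because $\clonegen{\DMAClone, \imptmin}$ preserves $\Pthree$). I would emphasize that (ii) is \emph{not} obtained from (i) by a symmetry of $\DMAClone$: no composite of $\dmneg$ and $\dual$ sends the equality-test $\impimin$ to $\imptmin$, since that would require a relabelling with $\True \mapsto \Both$, which none of these symmetries performs.
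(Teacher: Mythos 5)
Your proposal is correct in substance and follows the same basic strategy as the paper: exhibit explicit interdefining terms over $\DMAClone$, exploiting the definable constants $\Neither = \True \impimin \False$ and $\Both = \True \imptmin \True$. Items (iii) and (iv) essentially coincide with the paper's proof (the paper establishes $\equivcrisp \in \clonegen{\DMAClone, \Box}$ by an explicit term rather than by citing Theorem~\ref{harmonious clones}(iv), but your appeal to that theorem is sound, since $\equivcrisp$ and $\impG$ are harmonious and preserve $\Btwo$, $\Kthree$, $\Pthree$). Your fourfold conjunction giving $\impimin$ from $\imptmax$ is exactly the symmetrization used in the paper's Lemma~\ref{three basic protoimplications}, which the paper simply cites for this direction (and for $\imptmin$ from $\impimax$). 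Your converse term for (i), $D(x) \vee (D(y) \impimin \Neither)$ with $D(x) = (x \impimin \False) \vee (x \impimin \Neither)$, is correct; it differs from the paper's term $(x \vee \Neither) \impimin (x \vee y \vee \Neither)$, which as printed has $x$ and $y$ interchanged (at $(x,y) = (\True, \False)$ it evaluates to $\True$, not $\Neither$; the intended term is $(y \vee \Neither) \impimin (x \vee y \vee \Neither)$), so your version is in fact the cleaner one.

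The only real gap is (ii), where you assert a ``mirror-image construction'' without exhibiting it. Your cautionary remarks are right --- no composite of the $\dmneg$- and $\dual$-conjugations carries item (i) to item (ii), and $\Neither$ is unavailable in $\clonegen{\DMAClone, \imptmin}$ --- but precisely because of the second point the literal mirror image of your own construction fails: your detector $D$ tests equality against the constant $\Neither$, and the property it detects ($x$ not being true, i.e.\ $x \in \{\False, \Neither\}$) is the same property needed in (ii), so the mirrored detector would again require a test against $\Neither$. What saves the argument is a projection trick you do not mention: put $N(x) \assign (x \wedge \Both) \imptmin \False$, so that $N(x) = \Both$ if $x \notin \{\True, \Both\}$ and $N(x) = \False$ otherwise; then $x \impimax y = N(x) \vee (N(y) \imptmin \False)$, all values lying in $\{\False, \Both\}$, where $\vee$ acts as disjunction with $\Both$ designated. (The paper's own one-line term is $(x \wedge \Both) \imptmin (x \wedge y \wedge \Both)$.) The forward half of (ii), obtaining $\imptmin$ from $\impimax$ by the fourfold conjunction, does mirror (i) without incident. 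With this term supplied, your proof is complete.
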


\begin{proof}
  (i, ii) The functions $x \impimin y$ and $x \imptmin y$ are definable over $\DMAClone$ in terms of $x \imptmax y$ and $x \impimax y$ by Lemma~\ref{three basic protoimplications}. Conversely,
\settowidth{\auxlength}{$x \imptmax y$}
\begin{align*}
  \hbox to \auxlength{$x \imptmax y$} & = (x \vee \Neither) \impimin (x \vee y \vee \Neither), \\
  \hbox to \auxlength{$x \impimax y$} & = (x \wedge \Both) \imptmin (x \wedge y \wedge \Both),
\end{align*}
  where $\Neither = \True \impimin \False$ and $\Both = \True \imptmin \True$, because
\begin{align*}
  (a \vee \Neither) \impimin (a \vee b \vee \Neither) \in \{ \True, \Both \} \iff a \leq b \vee \Neither, \\
  (a \wedge \Both) \imptmin (a \wedge b \wedge \Both) \in \{ \True, \Both \} \iff a \wedge \Both \leq b.
\end{align*}

  (iii) Clearly $\Box a = \dmneg a \impG \False = \dmneg a \equivcrisp \False$. On the other hand,
\settowidth{\auxlength}{$a \impG b$}
\begin{align*}
  \hbox to \auxlength{$a \impG b$} & = b \vee (a \equivcrisp (a \wedge b)), \\
  \hbox to \auxlength{$a \equivcrisp b$} & = \Box (a \wedge b) \vee \Box (\dmneg a \wedge \dmneg b) \vee (\dmneg \Box (a \vee b) \wedge \dmneg \Box (\dmneg a \vee \dmneg b)).
\end{align*}

  (iv) We have $a \impcrisp b = \dmneg \Delta a \vee \Delta b$ and $\Delta a = \True \impcrisp a$.
\end{proof}

  We now turn to the classification of De Morgan clones above $\DMAClone$ within the Leibniz and Frege hierachies.

\begin{lemma}[Three basic protoimplications] \label{three basic protoimplications}
  Let $\clone{C}$ be a clone above $\DMAClone$ which contains some protoimplication $x \imp y$.
\begin{enumerate}[\rm(i)]
\item If the range of $x \imp y$ is $\{ \True, \Neither \}$, then $\clone{C}$ contains $x \impimin y$.
\item If the range of $x \imp y$ is $\{ \False, \Both \}$, then $\clone{C}$ contains $x \imptmin y$.
\item If the range of $x \imp y$ is $\{ \True, \False \}$, then $\clone{C}$ contains $x \equivcrisp y$.
\end{enumerate}
\end{lemma}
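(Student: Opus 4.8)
The plan is to prove all three items at once using a single definition. Given a protoimplication $x \imp y$ in $\clone{C}$, set
\begin{align*}
  t(x, y) \assign (x \imp y) \wedge (y \imp x) \wedge (\dmneg x \imp \dmneg y) \wedge (\dmneg y \imp \dmneg x).
\end{align*}
Since $\wedge$ and $\dmneg$ lie in $\DMAClone \subseteq \clone{C}$ and $x \imp y \in \clone{C}$, the function $t$ lies in $\clone{C}$. I would then show that $t$ equals $x \impimin y$, $x \imptmin y$, or $x \equivcrisp y$ according to whether the range of $\imp$ is $\{\True, \Neither\}$, $\{\False, \Both\}$, or $\{\True, \False\}$.

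First I would extract two facts uniformly in all three cases. Write the range of $\imp$ as $\{p, q\}$, where $p$ is the unique designated value in the range ($p \in \{\True, \Both\}$) and $q \notin \{\True, \Both\}$. Reflexivity gives $a \imp a \in \{\True, \Both\}$, hence $a \imp a = p$. Modus Ponens gives that $a \in \{\True, \Both\}$ and $b \notin \{\True, \Both\}$ force $a \imp b \notin \{\True, \Both\}$, i.e.\ $a \imp b = q$. I would also observe that in each of the three cases $\{p, q\}$ is a two-element chain in the truth order whose larger element is the designated value $p$; thus $\{p, q\}$ is closed under $\wedge$, and a meet of elements of $\{p, q\}$ equals $p$ exactly when all of its arguments equal $p$.

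The diagonal is then immediate: each conjunct of $t(a, a)$ has the form $c \imp c = p$, so $t(a, a) = p$. The crux is the off-diagonal case, which I expect to be the heart of the argument. For $a \neq b$ I claim at least one conjunct is forced to $q$. The key observation is that the map $a \mapsto (a \in \{\True, \Both\}, \, \dmneg a \in \{\True, \Both\})$ is a bijection from $\DMfour$ onto $\{0,1\}^{2}$, since $a \in \{\True, \Both\}$ says that $a$ is true while $\dmneg a \in \{\True, \Both\}$ says that $a$ is false. Hence distinct $a, b$ differ in at least one coordinate: if they differ in the first, then one of $a, b$ is true and the other is not, so $a \imp b = q$ or $b \imp a = q$ by the Modus Ponens fact; if they differ in the second, then one of $\dmneg a, \dmneg b$ is designated while the other is not, so $\dmneg a \imp \dmneg b = q$ or $\dmneg b \imp \dmneg a = q$. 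In either case $t(a, b) = q$.

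Putting this together, $t$ has range contained in $\{p, q\}$ and satisfies $t(a, b) \in \{\True, \Both\}$ if and only if $a = b$. By the uniqueness clauses in the definitions of $\impimin$, $\imptmin$, and $\equivcrisp$ (each being the unique De~Morgan function with its prescribed two-element range that is designated exactly on the diagonal), the function $t$ coincides with $x \impimin y$ when the range is $\{\True, \Neither\}$, with $x \imptmin y$ when it is $\{\False, \Both\}$, and with $x \equivcrisp y$ when it is $\{\True, \False\}$, proving (i)--(iii) simultaneously. The only genuine obstacle is the separation claim, but the bijection above reduces it to a one-line observation rather than a case check over all pairs.
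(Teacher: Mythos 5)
Your proof is correct and takes essentially the same route as the paper: both form the symmetrized conjunction $(x \imp y) \wedge (y \imp x) \wedge (\dmneg x \imp \dmneg y) \wedge (\dmneg y \imp \dmneg x)$, show it is designated exactly on the diagonal with values in the range of $\imp$, and conclude by uniqueness of the function with that range which is designated precisely when its arguments are equal. Your bijection $a \mapsto (a \in \{\True,\Both\},\, \dmneg a \in \{\True,\Both\})$ is just a clean repackaging of the case analysis the paper leaves implicit.
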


\begin{proof}
  Consider the function
\begin{align*}
  x \imptwo y & \assign {(x \imp y) \wedge (y \imp x) \wedge (\dmneg y \imp \dmneg x) \wedge (\dmneg x \imp \dmneg y)}.
\end{align*}
  Then $x \imptwo y$ is a protoimplication and $a \imptwo b = b \imptwo a = \dmneg b \imptwo \dmneg a = \dmneg a \imptwo \dmneg b$. It follows that $a \imptwo b \notin \{ \True, \Both \}$ whenever either $a \in \{ \True, \Both \}$ and $b \notin \{ \True, \Both \}$ or $b \in \{ \True, \Both \}$ and $a \notin \{ \True, \Both \}$ or $\dmneg b \in \{ \True, \Both \}$ and $\dmneg a \notin \{ \True, \Both \}$ or $\dmneg a \in \{ \True, \Both \}$ and $\dmneg b \notin \{ \True, \Both \}$. Case analysis then shows that $a \imptwo b \notin \{ \True, \Both \}$ whenever $a \neq b$. Conversely, $a \imptwo a = \True$ because $x \imptwo y$ is a protoimplication. The function $x \imptwo y$ is now uniquely determined by its range, which it inherits from $x \imp y$.
\end{proof}

\begin{theorem}[Protoalgebraic clones above $\DMAClone$] \label{protoalgebraic clones}
  A De~Morgan clone above $\DMAClone$ is protoalgebraic if and only if it contains one of the functions $\Box x$, ${x \imptmin y}$, or $x \impimin y$.
\end{theorem}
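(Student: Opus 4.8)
The plan is to prove both directions through the characterization of protoalgebraicity by a single binary protoimplication (the Fact preceding Lemma~\ref{three basic protoimplications}), combined with the interval description of protoimplications and Lemma~\ref{three basic protoimplications} itself. For the right-to-left direction I would argue as follows. The function $x \imptmin y$ is a protoimplication, being the least element of the interval $[x \imptmin y, x \imptmax y]$ in the truth order, and $x \impimin y$ is a protoimplication, being the least element of $[x \impimin y, x \impimax y]$ in the information order (Theorem on smallest and largest protoimplications). Finally, if the clone contains $\Box x$, then by Proposition~\ref{prop: interdefinability of protoimplications}(iii) it contains $x \equivcrisp y$, which is a protoimplication because $a \equivcrisp a = \True \in F$ while $\True$ is the only designated value in its range $\{ \True, \False \}$, so modus ponens holds. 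Thus each of the three functions makes the clone protoalgebraic.

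For the converse, suppose $\clone{C} \supseteq \DMAClone$ is protoalgebraic, hence contains some protoimplication $x \imp y$. Following the proof of Lemma~\ref{three basic protoimplications}, I would pass to the symmetric protoimplication
\begin{align*}
  x \imptwo y & \assign (x \imp y) \wedge (y \imp x) \wedge (\dmneg y \imp \dmneg x) \wedge (\dmneg x \imp \dmneg y),
\end{align*}
which satisfies $a \imptwo b \in \{ \True, \Both \}$ iff $a = b$, together with the symmetries $a \imptwo b = b \imptwo a = \dmneg a \imptwo \dmneg b$. These symmetries collapse the table of $\imptwo$ to seven free values: the three diagonal values $\True \imptwo \True = \False \imptwo \False$, $\Neither \imptwo \Neither$, and $\Both \imptwo \Both$, each lying in $\{ \True, \Both \}$, and the four off-diagonal values $\True \imptwo \False$, $\True \imptwo \Neither = \False \imptwo \Neither$, $\True \imptwo \Both = \False \imptwo \Both$, and $\Neither \imptwo \Both$, each lying in $\{ \False, \Neither \}$.

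The goal is then to produce from $\imptwo$, over $\DMAClone$, a protoimplication whose range is exactly $\{ \True, \Neither \}$, $\{ \False, \Both \}$, or $\{ \True, \False \}$: Lemma~\ref{three basic protoimplications} delivers $x \impimin y$, $x \imptmin y$, or $x \equivcrisp y$ respectively, and in the last case Proposition~\ref{prop: interdefinability of protoimplications}(iii) converts $x \equivcrisp y$ into $\Box x$. I would organize the analysis by which of $\Kthree$ and $\Pthree$ the function $\imptwo$ preserves. A direct check shows that $\imptwo$ preserves $\Kthree$ exactly when its diagonal values at $\True$ and $\Neither$ both equal $\True$ (since $\True$ is the only designated element of $\Kthree$), and preserves $\Pthree$ exactly when the off-diagonal values $\True \imptwo \False$ and $\True \imptwo \Both$ both equal $\False$. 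This matches the fact that $x \impimin y$, $x \imptmin y$, and $x \equivcrisp y$ are, heuristically, the equivalence-like protoimplications preserving precisely $\Kthree$, precisely $\Pthree$, and all three subalgebras, so the preservation pattern of $\imptwo$ points to the target to aim for.

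The hard part is the reduction to a two-valued range, and I expect it to be the main obstacle. No post-processing can help: every non-constant unary term of $\DMAClone$ (namely $\idmap$, $\dmneg$, the two constants, and $x \mapsto x \wedge \dmneg x$, $x \mapsto x \vee \dmneg x$) fixes both $\Neither$ and $\Both$, so one cannot selectively turn an off-diagonal $\Neither$ into $\False$ or a diagonal $\Both$ into $\True$ by composing a unary map with $\imptwo$. The reduction therefore genuinely requires binary compositions of $\imptwo$ with $\DMAClone$-terms, which must be verified for each admissible value-pattern. To discharge these finitely many cases uniformly I would invoke the Baker--Pixley theorem: since the median $(x \wedge y) \vee (y \wedge z) \vee (x \wedge z)$ is a majority operation in $\DLatClone \subseteq \DMAClone$, an inclusion $g \in \clonegen{\DMAClone, \imptwo}$ holds if and only if every subalgebra of $\DMfouralg \times \DMfouralg$ preserved by $\imptwo$ is also preserved by $g$, a condition that can be checked mechanically for each pattern of the seven free values of $\imptwo$.
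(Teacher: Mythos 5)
Your right-to-left direction is correct and is essentially the paper's: $x \imptmin y$ and $x \impimin y$ are protoimplications, and $\Box x$ yields the protoimplication $x \equivcrisp y$ via Proposition~\ref{prop: interdefinability of protoimplications}(iii). Your setup for the converse is also sound as far as it goes: the symmetrization $x \imptwo y \assign (x \imp y) \wedge (y \imp x) \wedge (\dmneg y \imp \dmneg x) \wedge (\dmneg x \imp \dmneg y)$ lies in the clone, is designated exactly on the diagonal, and is determined by seven free values, as you say. The gap is everything after that. The entire content of the hard direction is the claim that for each admissible pattern of those seven values, $\clonegen{\DMAClone, \imptwo}$ contains one of $\Box$, $\imptmin$, $\impimin$ --- and you do not establish this for a single pattern. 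You only assert that each of the (up to) $2^{7}$ instances ``can be checked mechanically'' via Baker--Pixley, and you yourself flag this as the expected main obstacle. Reducing the theorem to a finite, decidable family of clone-membership questions is not the same as answering them: as written, your argument proves ``the theorem holds provided an unperformed computation succeeds,'' with no derivation, and no reported computational outcome, for any case. (The fact that the checks \emph{would} succeed follows from the theorem itself, since each pattern is again a protoimplication --- so appealing to that is circular. Note also that your $\Kthree$/$\Pthree$ heuristic leaves the patterns preserving neither subalgebra without even a conjectured target.)

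For comparison, the paper needs neither the symmetrization nor any computation; it splits on whether the given protoimplication $x \imp y$ preserves $\Btwo$. If it does not, Theorem~\ref{clones not preserving subalgebras} puts $\Neither$ or $\Both$ in the clone, and then $\Neither \vee (x \imp y)$ (resp.\ $\Both \wedge (x \imp y)$) is a protoimplication with range $\{\True, \Neither\}$ (resp.\ $\{\False, \Both\}$), so Lemma~\ref{three basic protoimplications} yields $\impimin$ (resp.\ $\imptmin$) directly. If it does preserve $\Btwo$, the paper forms $x \imptwo y \assign (x \imp y) \vee (x \imp \False) \vee (x \imp (x \imp y))$, checks that this is a protoimplication which is crisp on Boolean arguments and satisfies $\Neither \imptwo \False \neq \Neither$ and $\Both \imptwo \False \neq \Both$, and then exhibits $\Box$ explicitly as $\Box a = a \wedge (\dmneg a \imptwo \False) \wedge \dmneg (a \imptwo \False)$. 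An explicit construction of this kind --- or, failing that, the actual results of the exhaustive verification you describe --- is what your proposal is missing.
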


\begin{proof}
  We have already observed that $x \imptmin y$ and $x \impimin y$ are protoimplications. We also know that $x \equivcrisp y$ is a protoimplication, and that it belongs to the clone $\clonegen{\DMAClone, \Box}$.

  Conversely, suppose that a clone above $\DMAClone$ contains a protoimplication $x \imp y$. If this function does not preserve $\Btwo$, then the clone contains either $\Neither$~or~$\Both$. In the former case, $\Neither \vee (x \imp y)$ is a protoimplication with range $\{ \True, \Neither \}$, therefore the clone contains $x \impimin y$ by Lemma \ref{three basic protoimplications}. In the latter case, $\Both \wedge (x \imp y)$ is a protoimplication with range $\{ \False, \Both \}$, therefore the clone contains $x \imptmin y$ by Lemma \ref{three basic protoimplications}.

  Now suppose that $x \imp y$ preserves $\Btwo$. Consider the function
\begin{align*}
  x \imptwo y & \assign (x \imp y) \vee (x \imp \False) \vee (x \imp (x \imp y)).
\end{align*}
  We first show that this function is a protoimplication. Clearly $a \imptwo a \in \{ \True, \Both \}$, since $a \imptwo a \geq a \imp a \in \{ \True, \Both \}$. Moreover, if $a \in \{ \True, \Both \}$ and $b \notin \{ \True, \Both \}$, then $a \imp \False \notin \{ \True, \Both \}$ and $a \imp b \notin \{ \True, \Both \}$, therefore also $a \imp (a \imp b) \notin \{ \True, \Both \}$. Because $\{ \True, \Both \}$ is a prime filter, it follows that $a \imptwo b \notin \{ \True, \Both \}$.

  Because $x \imp y$ preserves $\Btwo$, so does $x \imptwo y$, therefore $a \imptwo b = a \impcrisp b$ for $a, b \in \{ \True, \False \}$. Moreover, $\Neither \imptwo \False \neq \Neither$: (i) if $\Neither \imp \False = \False$, then $\Neither \imptwo \False = \False$, (ii) if $\Neither \imp \False = \Neither$, then $\Neither \imptwo \False \geq \Neither \imp \Neither = \True$, and (iii) if $\Neither \imp \False \geq \Both$, then $\Neither \imptwo \False \geq \Neither \imp \False \geq \Both$. By the same token $\Both \imptwo \False \neq \Both$. It now follows that $\Box a = a \wedge (\dmneg a \imptwo \False) \wedge \dmneg (a \imptwo \False)$ for each $a \in \DMfour$.
\end{proof}

\begin{corollary}[Equivalential clones above $\DMAClone$]
  A De~Morgan clone above $\DMAClone$ is equivalential if and only if it is protoalgebraic.
\end{corollary}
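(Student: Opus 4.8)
The plan is to observe that one implication is free and to reduce the other to the characterization of protoalgebraicity just obtained in Theorem~\ref{protoalgebraic clones}. First I would dispatch the direction from equivalentiality to protoalgebraicity: this holds with no work specific to the four-valued setting, since every equivalence set is in particular a protoimplication set. Indeed, if $x \biimp y$ is an equivalence function, then $x \biimp x \in \{ \True, \Both \}$ because $x = x$, and $x, x \biimp y \in \{ \True, \Both \}$ forces $x = y$ and hence $y \in \{ \True, \Both \}$; so $x \biimp y$ is a protoimplication and the clone is protoalgebraic.

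For the converse, I would invoke Theorem~\ref{protoalgebraic clones}, which tells us that a protoalgebraic clone above $\DMAClone$ must contain one of the three functions $\Box x$, $x \imptmin y$, or $x \impimin y$. By the Fact characterizing equivalentiality, it then suffices to exhibit in each of these three cases a binary function $x \biimp y$ in the clone satisfying $x \biimp y \in \{ \True, \Both \}$ if and only if $x = y$.

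Two of the three cases are immediate from the definitions. By construction, both $x \imptmin y$ and $x \impimin y$ are the unique De~Morgan functions of their prescribed range for which $a \rightarrow b \in \{ \True, \Both \}$ holds exactly when $a = b$. This is precisely the defining condition of an equivalence function, so whenever the clone contains $x \imptmin y$ or $x \impimin y$ it is already equivalential. The remaining case is $\Box x$: here I would use the fact, recorded at the start of Section~\ref{sec: discriminator} (and again in Proposition~\ref{prop: interdefinability of protoimplications}), that the crisp equivalence $x \equivcrisp y$ lies in $\clonegen{\DMAClone, \Box}$ via the expression $x \equivcrisp y = \Box (x \wedge y) \vee \Box (\dmneg x \wedge \dmneg y) \vee (\dmneg \Box (x \vee y) \wedge \dmneg \Box (\dmneg x \vee \dmneg y))$. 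Since $a \equivcrisp b = \True$ when $a = b$ and $a \equivcrisp b = \False$ otherwise, the function $x \equivcrisp y$ is an equivalence function, so this case is covered as well.

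I expect no genuine obstacle in this argument: its entire content is carried by the preceding Theorem~\ref{protoalgebraic clones} and the three Facts characterizing protoalgebraicity and equivalentiality. The only point to verify is that each of the three functions supplied by that theorem either is an equivalence function ($x \imptmin y$ and $x \impimin y$) or generates one ($\Box x$ yields $x \equivcrisp y$), and this is made transparent by their explicit definitions.
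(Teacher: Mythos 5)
Your proposal is correct and follows essentially the same route as the paper: both reduce to Theorem~\ref{protoalgebraic clones} and then observe that $x \imptmin y$ and $x \impimin y$ are themselves equivalence functions while $\Box$ yields the equivalence function $x \equivcrisp y$ in $\clonegen{\DMAClone, \Box}$. The paper merely leaves the trivial direction (equivalential implies protoalgebraic) implicit, which you spell out.
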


\begin{proof}
  Each of the functions $x \imptmin y$, $x \impimin y$, and $x \equivcrisp y$ is an equivalence function because $a \imptmin b = \True$ if and only if $a = b$, and likewise for $x \impimin y$ and $x \equivcrisp y$. Recall that $x \equivcrisp y$ lies in $\clonegen{\DMAClone, \Box}$.
\end{proof}

  Let us verify that these three functions indeed generate three distinct minimal protoalgebraic clones above $\DMAClone$.

\begin{fact}
  The clones $\clonegen{\DMAClone, \Box}$, $\clonegen{\DMAClone, \imptmin}$, $\clonegen{\DMAClone, \impimin}$ are pair\-wise incomparable by inclusion.
\end{fact}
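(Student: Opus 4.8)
The plan is to establish all six non-containments needed for pairwise incomparability: that none of $\Box$, $\imptmin$, $\impimin$ lies in the clone generated over $\DMAClone$ by either of the other two. Four of these fall out immediately from preservation of subalgebras. Every function of $\DMAClone$ preserves $\Btwo$, $\Kthree$ and $\Pthree$, and one checks directly that $\Box$ preserves all three, that $\imptmin$ has range $\{\False,\Both\}$ (so it preserves $\Pthree$, whereas $\True\imptmin\True=\Both\notin\Kthree$), and that $\impimin$ has range $\{\True,\Neither\}$ (so it preserves $\Kthree$, whereas $\True\impimin\False=\Neither\notin\Pthree$). Since preservation of a subset is inherited by the generated clone, I would record that $\clonegen{\DMAClone,\Box}$ preserves both $\Kthree$ and $\Pthree$, that $\clonegen{\DMAClone,\imptmin}$ preserves $\Pthree$, and that $\clonegen{\DMAClone,\impimin}$ preserves $\Kthree$. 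Because $\imptmin$ does not preserve $\Kthree$, it lies in neither $\clonegen{\DMAClone,\Box}$ nor $\clonegen{\DMAClone,\impimin}$; because $\impimin$ does not preserve $\Pthree$, it lies in neither $\clonegen{\DMAClone,\Box}$ nor $\clonegen{\DMAClone,\imptmin}$. This disposes of four of the six non-containments.

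The remaining, and genuinely harder, two are $\Box\notin\clonegen{\DMAClone,\imptmin}$ and $\Box\notin\clonegen{\DMAClone,\impimin}$. These cannot be settled by preservation of a unary subset, since $\Box$ preserves $\Btwo$, $\Kthree$ and $\Pthree$ simultaneously; the obstruction must be a binary relation, i.e.\ a subalgebra of $\DMfouralg\times\DMfouralg$. The key step is to exhibit, for each generator, such a subalgebra that the generator preserves but $\Box$ does not. For $\imptmin$ I would take $R\assign(\Pthree\times\Pthree)\setminus\{\pair{\True}{\False},\pair{\False}{\True}\}$, and for $\impimin$ the analogous $R'\assign(\Kthree\times\Kthree)\setminus\{\pair{\True}{\False},\pair{\False}{\True}\}$. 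Both contain $\pair{\True}{\True}$ and $\pair{\False}{\False}$ and are closed under the componentwise De~Morgan operations, hence are subalgebras and so preserved by all of $\DMAClone$. Since $\imptmin$ outputs $\Both$ on equal arguments and $\False$ on unequal ones, every value $\pair{a_1\imptmin b_1}{a_2\imptmin b_2}$ lies in $\{\False,\Both\}^{2}\subseteq R$, so $R$ is preserved by $\imptmin$; symmetrically the outputs of $\impimin$ lie in $\{\True,\Neither\}^{2}\subseteq R'$, so $R'$ is preserved by $\impimin$. Thus $R$ is preserved by $\clonegen{\DMAClone,\imptmin}$ and $R'$ by $\clonegen{\DMAClone,\impimin}$, whereas $\Box$ fails on each: $\pair{\True}{\Both}\in R$ but $\pair{\Box\True}{\Box\Both}=\pair{\True}{\False}\notin R$, and $\pair{\True}{\Neither}\in R'$ but $\pair{\Box\True}{\Box\Neither}=\pair{\True}{\False}\notin R'$. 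As the functions preserving a fixed relation form a clone, this yields $\Box\notin\clonegen{\DMAClone,\imptmin}$ and $\Box\notin\clonegen{\DMAClone,\impimin}$ (the same conclusion also follows from the Baker--Pixley criterion invoked earlier).

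I expect the only real work to be verifying that $R$ and $R'$ are genuinely closed under $\wedge$, $\vee$ and $\dmneg$, which amounts to checking that the two excluded pairs $\pair{\True}{\False}$ and $\pair{\False}{\True}$ can never be produced from the remaining seven. This is a short case split: such a pair can arise only as a meet or a join, and in a coordinate the value $\True$ as a truth-meet forces both arguments there to equal $\True$ while the value $\False$ as a truth-join forces both to equal $\False$; tracing the other coordinate then shows neither pair is reachable, and closure under $\dmneg$ is clear since $\dmneg$ merely interchanges the two forbidden pairs. Combining the two relation witnesses with the four subalgebra-preservation arguments gives all six non-containments, and hence the three clones are pairwise incomparable by inclusion.
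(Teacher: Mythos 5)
Your proof is correct and takes essentially the same approach as the paper: the four easy non-containments are settled by preservation of $\Kthree$ and $\Pthree$, and the two remaining ones ($\Box \notin \clonegen{\DMAClone, \imptmin}$ and $\Box \notin \clonegen{\DMAClone, \impimin}$) by exactly the binary relations the paper uses --- your $R'$ is the paper's relation, your $R$ is the paper's ``replace $\Neither$ by $\Both$'' variant, with the same witnessing pairs $\pair{\True}{\Neither}$ and $\pair{\True}{\Both}$. The only difference is that you spell out the closure of $R$ and $R'$ under $\wedge$, $\vee$, $\dmneg$, which the paper leaves implicit.
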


\begin{proof}
  The functions $\Box x$ and $x \imptmin y$ preserve $\Pthree$, but $x \impimin y$ does not. Similarly, $\Box x$ and $x \impimin y$ preserve $\Kthree$, but $x \imptmin y$ does not. Finally, consider the binary relation $R \subseteq \DMfour \times \DMfour$ such that $\pair{a}{b} \in R$ if and only if $a, b \in \{ \True, \Neither, \False \}$ and $\pair{a}{b}$ is distinct from both $\pair{\True}{\False}$ and $\pair{\False}{\True}$. Then each function in $\clonegen{\DMAClone, \impimin}$ preserves $R$ but $\Box x$ does not, since $\pair{\True}{\Neither} \in R$ but $\pair{\Box \True}{\Box \Neither} = \pair{\True}{\False} \notin R$. Replacing $\Neither$ by $\Both$ yields a relation which is preserved by each function in $\clonegen{\DMAClone, \imptmin}$ but not by $\Box x$.
\end{proof}

  On the other hand, observe that $\Box x$ lies in $\clonegen{\DMAClone, \imptmin, \impimin}$:
\begin{align*}
  \Box x = x \wedge (\Neither \vee (x \imptmin \True)) \wedge \dmneg (x \impimin \Neither)).
\end{align*}

\begin{theorem}[Truth-equational clones above $\DMAClone$] \label{truth-equational clones}
  A De~Morgan clone above $\DMAClone$ is truth-equational if and only if it contains $\Truenton$ or~$\Truebtob$.
\end{theorem}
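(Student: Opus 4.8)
The plan is to prove the two implications by entirely different means, first invoking the Unary non-harmonious functions lemma (Lemma~\ref{unary non-harmonious functions}) to restate the hypothesis: a clone above $\DMAClone$ contains $\Truenton$ or $\Truebtob$ if and only if it contains a non-harmonious unary function. Thus the contrapositive of the left-to-right implication reads: if every unary function of $\clone{C}$ is harmonious, then $\clone{C}$ is not truth-equational.

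For this (``only if'') direction I would argue entirely on the base matrix $\pair{\DMfour}{F}$ with $F = \{\True, \Both\}$. First observe that this matrix is reduced, i.e.\ $\Leibniz{\clone{C}}{F}$ equals the identity: using the projection $x$ and the negation $\dmneg x \in \DMAClone$ one separates any two distinct elements, since an element is determined by its membership in $\{\True, \Both\}$ together with its membership in $\{\False, \Both\}$ (the latter being detected by $\dmneg x \in F$). Consequently, were $\clone{C}$ truth-equational, the defining equations $t_{i}(x) \approx u_{i}(x)$ would have to satisfy $F = \set{a \in \DMfour}{t_{i}(a) = u_{i}(a) \text{ for all } i}$. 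But each $t_{i}, u_{i}$ is a \emph{unary} function of $\clone{C}$, hence harmonious by assumption, so $t_{i}(\Neither) = \dual t_{i}(\Both)$ and $u_{i}(\Neither) = \dual u_{i}(\Both)$. Since $\Both \in F$ gives $t_{i}(\Both) = u_{i}(\Both)$ for all $i$, applying $\dual$ yields $t_{i}(\Neither) = u_{i}(\Neither)$ for all $i$, so $\Neither$ satisfies every equation and would lie in $F$ --- a contradiction. This is the conceptual heart of the theorem: harmonious unary terms cannot distinguish the conflation-twins $\Neither$ and $\Both$, whereas $F$ contains exactly one of them.

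For the (``if'') direction I would exhibit explicit defining equations, treating $\Truebtob$ and $\Truenton$ separately, since conflation does not preserve $F$. If $\Truebtob \in \clone{C}$, the single equation $x \approx \Truebtob\, x$ cuts out $\{\True, \Both\} = F$ on the base matrix, as $\Truebtob$ fixes precisely $\True$ and $\Both$. If instead $\Truenton \in \clone{C}$, the pair $x \approx x \vee \dmneg x$ and $\True \approx \Truenton\, x$ works: their solution sets are $\{\True, \Neither, \Both\}$ and $\{\True, \False, \Both\}$, whose intersection is $F$. In either case the chosen equations already lie in $\clone{C}$, so what remains is to verify that they define truth in \emph{every} reduced model of the corresponding logic, not merely on $\pair{\DMfour}{F}$.

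I expect this last step to be the main obstacle, and the base matrix is genuinely deceptive here: the lone equation $x \approx x \vee \dmneg x$ already defines $\{\True, \Both\}$ on the subalgebra $\Pthree$, yet it does not make Priest's logic truth-equational, and it is precisely the globally non-harmonious operation $\Truenton$ (trivial on $\Pthree$ but not on $\DMfour$) that excludes the offending larger models. To control all reduced models I would use that every clone in question contains the lattice median $(x \wedge y) \vee (y \wedge z) \vee (x \wedge z)$, so the associated variety is congruence-distributive; by J\'{o}nsson's lemma together with the Baker--Pixley theorem, the subdirectly irreducible reduced models have underlying algebras among the finitely many subalgebras of $\DMfouralg$ expanded by $\clone{C}$, supported by $\Btwo$, $\Kthree$, $\Pthree$, $\DMfour$. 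The required property then reduces to a finite, mechanical check --- of the same computer-assisted kind already employed elsewhere in the paper --- namely that no undesignated element of any of these reduced matrices spuriously satisfies all the chosen equations. This descent from the base matrix to all reduced models is the hard part, in sharp contrast with the one-line base-matrix computation that settles the converse.
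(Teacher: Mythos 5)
Your ``only if'' direction is correct and is essentially the paper's own argument: the paper likewise invokes Lemma~\ref{unary non-harmonious functions} to conclude that all unary functions of such a clone are harmonious, and then observes that $\{\True,\Both\}$ is not closed under conflation and hence cannot be the solution set of equations between harmonious functions; your explicit check that the base matrix is reduced and the computation $t_{i}(\Both)=u_{i}(\Both)\implies t_{i}(\Neither)=u_{i}(\Neither)$ merely spell out what the paper states tersely. Your choice of defining equations in the ``if'' direction ($x\approx\Truebtob x$, resp.\ $\Truenton x\approx\True$ together with $x\vee\dmneg x\approx x$) is also exactly the paper's, and you correctly identify that the real issue is passing from the base matrix to arbitrary models.

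The gap is in how you propose to make that passage. Truth-equationality quantifies over all reduced \emph{matrix} models of the logic, and these are not controlled by the subdirectly irreducible algebras of the variety generated by the expanded algebra $\DMfouralg$. J\'{o}nsson's lemma does locate the subdirectly irreducible algebras of that congruence-distributive variety inside $HS$ of the expanded $\DMfouralg$, but (a) the algebra reduct of a reduced model need not be subdirectly irreducible, and (b) a reduced matrix over a subdirect product does not decompose into reduced matrices over the factors: such a decomposition theory for $\mathrm{Mod}^{*}$ exists only for protoalgebraic logics, and the clones $\clonegen{\DMAClone,\Truenton}$ and $\clonegen{\DMAClone,\Truebtob}$ are not protoalgebraic (neither contains $\Box$, $\imptmin$, or $\impimin$; indeed, were they protoalgebraic they would be weakly algebraizable, contradicting the paper's later corollary). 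Consequently your finite ``mechanical check'' over $\Btwo$, $\Kthree$, $\Pthree$, $\DMfour$ says nothing about reduced matrices over the remaining (possibly infinite) algebras of the variety --- and your own observation about Priest's logic shows that checks on small matrices are genuinely insufficient here. The paper closes this gap without classifying models at all: it verifies on the base matrix the validity of the parametrized rules $x, t(\Truebtob x, \tuple{y}) \vdash t(x, \tuple{y})$ and $x, t(x, \tuple{y}) \vdash t(\Truebtob x, \tuple{y})$, together with $\emptyset \vdash \Truebtob x$; since valid rules persist in every model $\pair{\clone{C}'}{F'}$, the first two yield $a \in F' \implies \pair{\Truebtob a}{a} \in \Leibniz{\clone{C}'}{F'}$ directly from the definition of the Leibniz congruence, and conversely $\pair{\Truebtob a}{a} \in \Leibniz{\clone{C}'}{F'}$ together with $\Truebtob a \in F'$ and compatibility of $\Leibniz{\clone{C}'}{F'}$ with $F'$ gives $a \in F'$ (analogously for $\Truenton$ with its two equations and the rule $\Truenton x \vdash x \vee \dmneg x$). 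Replacing your J\'{o}nsson/Baker--Pixley step with this rule-based argument is what the proof requires.
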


\begin{proof}
  If a De~Morgan clone above $\DMAClone$ does not contain any of these functions, then by Lemma~\ref{unary non-harmonious functions} all of its unary functions are harmonious, i.e.\ they commute with conflation. But the set $\{ \True, \Both \}$ is not closed under conflation, therefore it cannot be defined by any equation $f(x) \equals g(x)$ where $f$ and $g$ are harmonious.

  Conversely, we show that if a De~Morgan clone above $\DMAClone$ contains one of these functions, then it is truth-equational. First observe that the set $\{ \True, \Both \}$ is defined by the equation
\begin{align*}
  \Truebtob x & \equals x,
\end{align*}
  as well the pair of equations
\begin{align*}
  \Truenton x \equals \True \text{ and } x \vee \dmneg x \equals x.
\end{align*}
  We claim that these equations witness the truth-equationality of the clone.

  Let us first consider the clone $\clonegen{\DMAClone, \Truebtob}$. Because the equation $\Truebtob x \equals x$ defines the set $\{ \True, \Both \}$, the rules
\begin{align*}
  x, t(\Truebtob x, \tuple{y}) & \vdash t(x, \tuple{y}) & & \text{ and } & x, t(x, \tuple{y}) & \vdash t(\Truebtob x, \tuple{y})
\end{align*}
  are valid in the logic determined by this clone for each term $t(x, \tuple{y})$. It follows that ${a \in F}$ implies $\pair{\Truebtob a}{a} \in \Leibniz{\clone{C}}{F}$ in each model $\pair{\clone{C}}{F}$ of the logic deter\-mined by $\clonegen{\DMAClone, \Truebtob}$. Conversely, suppose that $\pair{\Truebtob a}{a} \in \Leibniz{\clone{C}}{F}$ in some model $\pair{\clone{C}}{F}$ of this logic. We need to prove that $a \in F$. Because the rule $\emptyset \vdash \Truebtob x$ is valid in this logic, we have $\Truebtob a \in F$. But now $a \in F$ because $\pair{\Truebtob a}{a} \in \Leibniz{\clone{C}}{F}$.

  Similarly, consider the clone $\clonegen{\DMAClone}{\Truenton}$. Repeating the argument of the previous paragraph yields that $a \in F$ implies $\pair{\Truenton a}{\True} \in \Leibniz{\clone{C}}{F}$ and $\pair{a \vee \dmneg a}{a} \in \Leibniz{\clone{C}}{F}$ for each model $\pair{\clone{C}}{F}$ of the logic determined by $\clonegen{\DMAClone}{\Truenton}$. Conversely, suppose that both $\pair{\Truenton a}{\True} \in \Leibniz{\clone{C}}{F}$ and ${\pair{a \vee \dmneg a}{a} \in \Leibniz{\clone{C}}{F}}$ hold in some model $\pair{\clone{C}}{F}$ of this logic. We need to prove that $a \in F$. Because the rule $\emptyset \vdash \True$ is valid in this logic and $\pair{\Truenton a}{\True} \in \Leibniz{\clone{C}}{F}$, we may infer $\Truenton a \in F$. But the validity of the rule $\Truenton x \vdash x \vee \dmneg x$ implies that $a \vee \dmneg a \in F$, and finally $\pair{a \vee \dmneg a}{a} \in \Leibniz{\clone{C}}{F}$ implies that $a \in F$.
\end{proof}

\begin{fact}
  $\clonegen{\DMAClone, \Truenton}$ and $\clonegen{\DMAClone, \Truebtob}$ are pair\-wise incomparable by inclusion.
\end{fact}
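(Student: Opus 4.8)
The plan is to separate the two clones by exhibiting a subset of $\DMfour^{2}$, viewed as a subalgebra of $\DMfouralg \times \DMfouralg$, which is closed under every operation of one clone but not under the extra generator of the other. First I would reduce the work to a single non-inclusion using conflation symmetry. Since $\Truenton$ and $\Truebtob$ are conflation duals and every function in $\DMAClone$ commutes with the conflation automorphism $\dual$, the map sending a De~Morgan function to its conflation dual is an automorphism of the lattice of De~Morgan clones which fixes $\DMAClone$ and interchanges $\clonegen{\DMAClone, \Truebtob}$ with $\clonegen{\DMAClone, \Truenton}$. Hence $\Truenton \in \clonegen{\DMAClone, \Truebtob}$ holds if and only if $\Truebtob \in \clonegen{\DMAClone, \Truenton}$, and it suffices to refute the former.

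To do so I would consider the set
\[
  R = \{ \pair{\True}{\True}, \pair{\False}{\False} \} \cup (\{ \Both \} \times \Kthree) = \{ \pair{\True}{\True}, \pair{\False}{\False}, \pair{\Both}{\True}, \pair{\Both}{\False}, \pair{\Both}{\Neither} \}.
\]
The verification that $R$ is a subalgebra of $\DMfouralg \times \DMfouralg$ is a short finite check: it contains the diagonal pairs $\pair{\True}{\True}$ and $\pair{\False}{\False}$, and it is closed under the componentwise operations $\wedge$, $\vee$, $\dmneg$ because the first coordinate stays within the subalgebra $\Pthree = \{\True, \False, \Both\}$ while the second coordinate is only ever combined within the subalgebra $\Kthree = \{\True, \Neither, \False\}$ (the pairs with first coordinate $\Both$ meet and join coordinatewise, the Boolean pairs behave as in $\Btwo$, and a mixed meet or join collapses the first coordinate to a Boolean value and lands back on the diagonal). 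Consequently every function of $\DMAClone$ preserves $R$. The key additional computation is that $\Truebtob$ maps each pair of $R$ back into $R$: the three pairs with first coordinate $\Both$ are all sent to $\pair{\Both}{\True}$, while the two diagonal pairs are sent to $\pair{\True}{\True}$. Thus $R$ is closed under all generators of $\clonegen{\DMAClone, \Truebtob}$.

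Finally I would observe that $\Truenton$ does \emph{not} preserve $R$, since $\Truenton \pair{\Both}{\Neither} = \pair{\True}{\Neither} \notin R$. Because a clone preserves every relation preserved by its generators, this elementary fact (no appeal to the Baker--Pixley theorem is required) already yields $\Truenton \notin \clonegen{\DMAClone, \Truebtob}$. The conflation-dual relation $\{ \pair{\True}{\True}, \pair{\False}{\False}, \pair{\Neither}{\True}, \pair{\Neither}{\False}, \pair{\Neither}{\Both} \}$ then witnesses $\Truebtob \notin \clonegen{\DMAClone, \Truenton}$ in exactly the same way, so the two clones are incomparable. The only mild obstacle is guessing the separating relation: it must contain a pair such as $\pair{\Both}{\Neither}$ whose $\Neither$ entry survives $\Truenton$ while its $\Both$ entry is collapsed to $\True$, yet be large enough to be closed under $\Truebtob$, which is precisely what forces the auxiliary pairs $\pair{\Both}{\True}$ and $\pair{\Both}{\False}$ into $R$.
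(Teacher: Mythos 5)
Your proposal is correct and takes essentially the same approach as the paper: the paper's proof uses exactly the same separating relation $R = \{\pair{\Both}{\True},\pair{\Both}{\Neither},\pair{\Both}{\False},\pair{\True}{\True},\pair{\False}{\False}\}$ together with its $\Neither$/$\Both$-switched counterpart, and the same preservation argument. One minor slip in your informal closure check: a mixed meet such as $\pair{\True}{\True} \wedge \pair{\Both}{b} = \pair{\Both}{b}$ does not collapse to the diagonal, but $R$ is still closed in every case, so the argument stands.
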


\begin{proof}
  Consider the binary relation $R \subseteq \DMfour \times \DMfour$ such that $\pair{a}{b} \in R$ if and only if either $a = \Both$ and $b \in \{ \True, \Neither, \False \}$ or $a = \True = b$ or $a = \False = b$. Then each function in $\clonegen{\DMAClone, \Truebtob}$ preserves $R$ but $\Truenton$ does not. Switching $\Both$ and $\Neither$ yields a relation which preserved by $\clonegen{\DMAClone, \Truenton}$ but not by $\Truebtob$.
\end{proof}

\begin{corollary}[Algebraizable clones above $\DMAClone$]
  A De~Morgan clone above $\DMAClone$ is algebraizable (or equivalently, weakly algebraizable) if and only if it contains one of the functions $\Delta x$, $x \imptmin y$, $x \impimin y$.
\end{corollary}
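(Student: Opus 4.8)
The plan is to read off the result from the two preceding characterization theorems together with the fact, recorded in the Corollary on equivalential clones, that a clone above $\DMAClone$ is equivalential precisely when it is protoalgebraic. Consequently algebraizability and weak algebraizability coincide for such clones, and both reduce to being simultaneously protoalgebraic and truth-equational. By Theorem~\ref{protoalgebraic clones} the first condition says the clone contains $\Box x$, $x \imptmin y$, or $x \impimin y$, and by Theorem~\ref{truth-equational clones} the second says it contains $\Truenton$ or $\Truebtob$. Thus the whole task is to show that the conjunction of these two conditions is equivalent to containing one of $\Delta x$, $x \imptmin y$, $x \impimin y$.

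For the direction from right to left I would treat the three generators in turn. If the clone contains $x \imptmin y$, then it contains the constant $\Both = x \imptmin x$ (see Figure~\ref{fig:min max implication}), hence $\Truebtob = x \vee \dmneg x \vee \Both$; since $x \imptmin y$ is a protoimplication, the clone is both protoalgebraic and truth-equational. Dually, containing $x \impimin y$ yields $\Neither = \True \impimin \False$ and hence $\Truenton$. Finally, if the clone contains $\Delta x$, then it contains $\Box x = \Delta x \wedge \nabla x$, so it is protoalgebraic; and since $\Delta$ is a unary non-harmonious function, Lemma~\ref{unary non-harmonious functions} shows the clone contains $\Truenton$ or $\Truebtob$, so it is truth-equational.

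For the converse I would start from protoalgebraicity, which by Theorem~\ref{protoalgebraic clones} supplies $\Box x$, $x \imptmin y$, or $x \impimin y$. In the last two cases one of the three required functions is already present. The one remaining case, in which the clone contains $\Box x$ but neither $x \imptmin y$ nor $x \impimin y$, is the only delicate point: protoalgebraicity gives $\Box$ but not $\Delta$, and truth-equationality gives only $\Truenton$ or $\Truebtob$, so neither ingredient alone produces $\Delta$. Here I would invoke the identities already established in the proof of Theorem~\ref{discriminator clones}, namely $\Delta x = (x \vee \dmneg x) \wedge \Box \Truenton x$ and $\Delta x = \Box x \vee \dmneg \Box \Truebtob x$: combining $\Box$ with whichever of $\Truenton$, $\Truebtob$ truth-equationality provides yields $\Delta x$ in the clone, completing the argument. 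Everything beyond this bookkeeping is routine.
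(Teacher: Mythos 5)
Your overall route is the same as the paper's: reduce (weak) algebraizability to protoalgebraicity plus truth-equationality via the corollary on equivalential clones, then combine Theorem~\ref{protoalgebraic clones} with Theorem~\ref{truth-equational clones}, bridging the two lists exactly as the paper does ($\Both = \True \imptmin \True$ yields $\Truebtob = x \vee \dmneg x \vee \Both$, $\Neither = \True \impimin \False$ yields $\Truenton$, and $\Box$ together with $\Truenton$ or $\Truebtob$ yields $\Delta$). One local variation of yours is actually cleaner than the paper's: to see that a clone containing $\Delta$ is truth-equational, you observe that $\Delta$ is a unary non-harmonious function and invoke Lemma~\ref{unary non-harmonious functions} to get $\Truenton$ or $\Truebtob$, whereas the paper gets this from Theorem~\ref{discriminator clones} (or from explicit identities). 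Both are valid.

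There is, however, one step that fails as written. The identity $\Delta x = (x \vee \dmneg x) \wedge \Box \Truenton x$, which you quote from the proof of Theorem~\ref{discriminator clones}, is false (it is in fact a typo in the paper): at $x = \False$ the right-hand side is $(\False \vee \True) \wedge \Box \Truenton \False = \True \wedge \Box \True = \True$, whereas $\Delta \False = \False$, and it also fails at $x = \Both$, where it gives $\Both$ instead of $\True$. So in the case where truth-equationality supplies only $\Truenton$, your argument has a gap. The conclusion you need, namely $\Delta \in \clonegen{\DMAClone, \Box, \Truenton}$, is nevertheless true; the paper's own proof of this corollary uses the correct identity
\begin{align*}
  \Delta x & = \Box x \vee (\Box \Truenton x \wedge \dmneg \Box (x \vee \dmneg x)),
\end{align*}
which one checks on all four truth values. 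Your second cited identity, $\Delta x = \Box x \vee \dmneg \Box \Truebtob x$, is correct, so the $\Truebtob$ case of your argument stands as written; with the $\Truenton$ identity replaced as above, your proof is complete and coincides with the paper's.
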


\begin{proof}
  Recall that a clone is called (weakly) algebraizable if it is truth-equational and equivalential (protoalgebraic). By Theorems \ref{protoalgebraic clones}~and~\ref{truth-equational clones}, a clone above $\DMAClone$ is algebraizable if and only if it contains one of the \mbox{functions} $\{ \Box, \imptmin, \impimin \}$ and one of the functions $\{ \Truenton, \Truebtob \}$. If the clone contains $\imptmin$, then it contains $\Truebtob$ because $\Truebtob x = x \vee \dmneg x \vee (\True \imptmin \True)$. Similarly, if it contains $\impimin$, then it already contains $\Truenton$ because $\Truenton x = x \vee \dmneg x \vee (\True \impimin x)$. A clone above $\DMAClone$ is thus algebraizable if and only if it contains $\imptmin$ or $\impimin$ or both $\Box$ and either $\Truenton$ or $\Truebtob$. But
\begin{align*}
  \Delta x & = \Box x \vee (\Box \Truenton x \wedge \dmneg \Box (x \vee \dmneg x)) = \Box x \vee \dmneg \Box \Truebtob x.
\end{align*}
  Alternatively, we may observe that $\Truenton$ and $\Truebtob$ preserve $\Btwo$, $\Kthree$, and $\Pthree$, but they are not partially harmonious. By Theorem~\ref{discriminator clones}, this means that $\clonegen{\DMAClone, \Box, \Truenton} = \clonegen{\DMAClone, \Box, \Truebtob} = \clonegen{\DMAClone, \Delta}$.
\end{proof}

\begin{fact}
  The clones $\clonegen{\DMAClone, \Delta}$, $\clonegen{\DMAClone, \imptmin}$, $\clonegen{\DMAClone, \impimin}$ are pair\-wise incomparable by inclusion.
\end{fact}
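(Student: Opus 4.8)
The plan is to separate the three clones using two kinds of invariants: preservation of the subalgebras $\Kthree$ and $\Pthree$, and preservation of two explicit binary relations already employed in the corresponding fact for $\Box$, $\imptmin$, $\impimin$. Recall from Theorem~\ref{clones preserving subalgebras} that $\clonegen{\DMAClone, \Delta}$ is exactly the clone of all De~Morgan functions preserving $\Btwo$, $\Kthree$, and $\Pthree$. First I would record the elementary observations that $x \imptmin y$ has range $\{ \False, \Both \}$, so it preserves $\Pthree$ but not $\Kthree$ (its diagonal value $a \imptmin a = \Both$ escapes $\Kthree$), whereas $x \impimin y$ has range $\{ \True, \Neither \}$, so it preserves $\Kthree$ but not $\Pthree$ (e.g.\ $\True \impimin \False = \Neither$). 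These two facts immediately yield that $\clonegen{\DMAClone, \imptmin}$ and $\clonegen{\DMAClone, \impimin}$ are mutually incomparable, and that neither $\imptmin$ nor $\impimin$ lies in $\clonegen{\DMAClone, \Delta}$, since the latter preserves both $\Kthree$ and $\Pthree$. Hence $\clonegen{\DMAClone, \imptmin} \nsubseteq \clonegen{\DMAClone, \Delta}$ and $\clonegen{\DMAClone, \impimin} \nsubseteq \clonegen{\DMAClone, \Delta}$.

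The remaining two inclusions, namely showing $\Delta \notin \clonegen{\DMAClone, \imptmin}$ and $\Delta \notin \clonegen{\DMAClone, \impimin}$, cannot be settled by subalgebra preservation alone, since $\Delta$ shares with $\imptmin$ the preservation of $\Pthree$ and with $\impimin$ the preservation of $\Kthree$. This is the main obstacle, and I would overcome it by reusing the two binary relations appearing in the incomparability proof for $\Box$, $\imptmin$, $\impimin$. Let $R$ be the set of pairs $\pair{a}{b}$ with $a, b \in \Kthree$ other than $\pair{\True}{\False}$ and $\pair{\False}{\True}$, which is preserved by every function of $\clonegen{\DMAClone, \impimin}$, and let $R'$ be its $\Pthree$-analogue obtained by replacing $\Neither$ with $\Both$, which is preserved by every function of $\clonegen{\DMAClone, \imptmin}$. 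The key computation is that $\Delta$ fails to preserve either relation: $\pair{\True}{\Neither} \in R$ while $\pair{\Delta \True}{\Delta \Neither} = \pair{\True}{\False} \notin R$, and $\pair{\False}{\Both} \in R'$ while $\pair{\Delta \False}{\Delta \Both} = \pair{\False}{\True} \notin R'$. Therefore $\Delta \notin \clonegen{\DMAClone, \impimin}$ and $\Delta \notin \clonegen{\DMAClone, \imptmin}$, giving $\clonegen{\DMAClone, \Delta} \nsubseteq \clonegen{\DMAClone, \impimin}$ and $\clonegen{\DMAClone, \Delta} \nsubseteq \clonegen{\DMAClone, \imptmin}$.

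Combining the two paragraphs yields strict incomparability for all three pairs. The only genuinely delicate point is the legitimacy of using $R$ and $R'$ as certificates of non-membership; this rests on the Baker--Pixley description already invoked earlier, namely that membership of a function in a clone above $\DMAClone$ is decided by preservation of the subalgebras of $\DMfouralg \times \DMfouralg$, so that exhibiting a single preserved binary relation that $\Delta$ violates suffices. Everything else reduces to reading off the ranges of $\imptmin$ and $\impimin$ and evaluating $\Delta$ on the four truth values, so no further machinery is needed.
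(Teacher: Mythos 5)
Your proof is correct, but it routes both halves of the argument differently from the paper. For the non-inclusions $\clonegen{\DMAClone, \imptmin} \nsubseteq \clonegen{\DMAClone, \Delta}$ and $\clonegen{\DMAClone, \impimin} \nsubseteq \clonegen{\DMAClone, \Delta}$, the paper simply observes that $\Delta$ preserves $\Btwo$ while $\imptmin$ and $\impimin$ do not; your $\Kthree$/$\Pthree$ argument is an equally valid substitute, and it has the bonus of explicitly re-establishing the incomparability of $\clonegen{\DMAClone, \imptmin}$ and $\clonegen{\DMAClone, \impimin}$, which the paper inherits silently from the preceding fact. For the remaining non-inclusions the paper does no new computation at all: since $\Box x = \Delta x \wedge \nabla x$ lies in $\clonegen{\DMAClone, \Delta}$, and the preceding fact already shows that neither $\clonegen{\DMAClone, \imptmin}$ nor $\clonegen{\DMAClone, \impimin}$ contains $\Box$, it follows at once that neither contains $\clonegen{\DMAClone, \Delta}$. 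You instead re-run the relation certificates $R$ and $R'$ with $\Delta$ in place of $\Box$; the computations $\pair{\Delta \True}{\Delta \Neither} = \pair{\True}{\False} \notin R$ and $\pair{\Delta \False}{\Delta \Both} = \pair{\False}{\True} \notin R'$ are correct, so this works, at the cost of repeating what the reduction via $\Box$ gives for free. One small conceptual correction: certifying non-membership by a preserved relation does not rest on the Baker--Pixley theorem. It needs only the easy direction of the polymorphism--invariant correspondence, namely that the functions preserving a fixed relation form a clone; hence if the generators $\DMAClone \cup \{\impimin\}$ preserve $R$, so does every member of $\clonegen{\DMAClone, \impimin}$, and $\Delta$ is excluded. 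Baker--Pixley is needed only for the converse (completeness) direction, i.e.\ when one wants to \emph{prove} membership from preservation of all subalgebras of $\DMfouralg \times \DMfouralg$.
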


\begin{proof}
  The function $\Delta x$ preserves $\Btwo$, while the other two functions do not. On the other hand, we have already shown that neither $\clonegen{\DMAClone, \imptmin}$ nor $\clonegen{\DMAClone, \impimin}$ lies above $\clonegen{\DMAClone, \Box}$, therefore neither clone lies above $\clonegen{\DMAClone, \Delta}$.
\end{proof}

\begin{corollary}[Protoalgebraic non-truth-eq.\ clones above $\DMAClone$]
  The only proto\-algebraic clones lying above $\DMAClone$ which are not truth-equational are $\clonegen{\DMAClone, \Box}$, $\clonegen{\DMAClone, \dual}$, and $\clonegen{\DMAClone, \Box, \Deltanbpair}$.
\end{corollary}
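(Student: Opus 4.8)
The plan is to combine the characterization of protoalgebraicity (Theorem~\ref{protoalgebraic clones}) with that of truth-equationality (Theorem~\ref{truth-equational clones}) and then to locate the surviving clones inside the already-classified lattice of discriminator clones (Theorem~\ref{discriminator clones}). First I would observe that a protoalgebraic clone which is \emph{not} truth-equational must in fact contain $\Box$. Indeed, protoalgebraicity forces such a clone to contain one of $\Box$, $x \imptmin y$, $x \impimin y$; but the identities $\Truebtob x = x \vee \dmneg x \vee (\True \imptmin \True)$ and $\Truenton x = x \vee \dmneg x \vee (\True \impimin x)$ (already used in the algebraizability corollary above) show that containing $x \imptmin y$ or $x \impimin y$ forces the clone to contain $\Truebtob$ or $\Truenton$, hence to be truth-equational by Theorem~\ref{truth-equational clones}. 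So a protoalgebraic non-truth-equational clone contains neither of these protoimplications and must therefore contain $\Box$; that is, it lies in the lattice of discriminator clones described by Theorem~\ref{discriminator clones}.

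Next I would show that, within this lattice, failing to be truth-equational is equivalent to not containing $\Delta$. By Theorem~\ref{truth-equational clones} a clone is truth-equational iff it contains $\Truenton$ or $\Truebtob$. Both of these functions preserve $\Btwo$, $\Kthree$, $\Pthree$ and hence lie in $\clonegen{\DMAClone, \Delta}$ by Theorem~\ref{clones preserving subalgebras}; conversely, since they preserve the three subalgebras but are not partially harmonious, Theorem~\ref{discriminator clones} gives $\clonegen{\DMAClone, \Box, \Truenton} = \clonegen{\DMAClone, \Box, \Truebtob} = \clonegen{\DMAClone, \Delta}$. Thus for any $\clone{C} \supseteq \clonegen{\DMAClone, \Box}$ we have $\Truenton \in \clone{C} \iff \Delta \in \clone{C} \iff \Truebtob \in \clone{C}$, so such a $\clone{C}$ is truth-equational exactly when it contains $\Delta$, i.e.\ exactly when $\clonegen{\DMAClone, \Delta} \subseteq \clone{C}$.

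It then remains to read off the discriminator clones that do not contain $\Delta$. Consulting Figure~\ref{fig: discriminator clones}, the clones lying above $\clonegen{\DMAClone, \Delta}$ are $\clonegen{\DMAClone, \Delta}$ itself together with $\clonegen{\DMAClone, \Box, \idbton}$, $\clonegen{\DMAClone, \Box, \idntob}$, $\clonegen{\DMAClone, \Delta, \dual}$, $\clonegen{\DMAClone, \Box, \Neither}$, $\clonegen{\DMAClone, \Box, \Both}$, and the top clone; the remaining three nodes are precisely $\clonegen{\DMAClone, \Box}$, $\clonegen{\DMAClone, \Box, \Deltanbpair}$, and $\clonegen{\DMAClone, \dual}$. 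To close the argument I would confirm that these three are genuinely protoalgebraic and genuinely avoid $\Delta$: each contains $\Box$ (note $\Box x = x \wedge \dual x$ for $\clonegen{\DMAClone, \dual}$), so each is protoalgebraic; and $\clonegen{\DMAClone, \Box}$ and $\clonegen{\DMAClone, \dual}$ consist of harmonious functions while $\Delta$ is not harmonious, whereas $\clonegen{\DMAClone, \Box, \Deltanbpair}$ consists of partially harmonious functions while $\Delta(\dual \Neither) = \True \neq \False = \dual \Delta(\Neither)$ shows $\Delta$ is not partially harmonious. The only real obstacle is the bookkeeping of the second step, since all the genuine content is already carried by Theorems~\ref{truth-equational clones} and~\ref{discriminator clones}; beyond this reading of the lattice and the small harmonicity checks, no further computation is required.
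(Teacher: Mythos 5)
Your proof is correct and follows essentially the same route as the paper: first show that a protoalgebraic clone which is not truth-equational must contain $\Box$ (the paper does this via $\imptmax$, $\impimax$ and the constants $\Neither$, $\Both$; you via $\imptmin$, $\impimin$ and the identities producing $\Truebtob$, $\Truenton$ -- equivalent by interdefinability), and then read off the non-truth-equational clones from the discriminator lattice of Theorem~\ref{discriminator clones}. Your second step merely makes explicit the bookkeeping (a clone $\clone{C} \supseteq \clonegen{\DMAClone, \Box}$ is truth-equational iff $\Delta \in \clone{C}$, plus the harmonicity checks on the three surviving clones) that the paper compresses into a citation of Theorems~\ref{discriminator clones} and~\ref{truth-equational clones}.
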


\begin{proof}
  Each protoalgebraic clone either contains $\Box x$ or it contains one of the functions $x \imptmax y$ or $x \impimax y$ (Theorem~\ref{protoalgebraic clones}). But $\Neither = {\True \imptmax \False}$ and $\Both = \True \impimax \True$ and each clone above $\DMAClone$ which contains $\Neither$ or $\Both$ is truth-equational (Theorem~\ref{truth-equational clones}). Therefore each protoalgebraic clone above $\DMAClone$ is truth-equational unless it contains $\Box$. But the only clones among the extensions of $\clonegen{\DMAClone, \Box}$ which are not truth-equational are $\clonegen{\DMAClone, \Box}$, $\clonegen{\DMAClone, \dual}$, and $\clonegen{\DMAClone, \Box, \Deltanbpair}$ by Theorems \ref{discriminator clones}~and~\ref{truth-equational clones}.
\end{proof}

\begin{theorem}[Selfextensional clones above $\DMAClone$] \label{selfextensional clones}
  A De Morgan clone above $\DMAClone$ is selfextensional if and only if it lies below the clone $\clonegen{\DMAClone, \dual}$, i.e.\ if and only if it only contains harmonious functions.
\end{theorem}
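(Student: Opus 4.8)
The plan is to run everything through the preceding Fact, which says that a clone $\clone{C} \supseteq \DMAClone$ is selfextensional exactly when $f \interdash g$ forces $f = g$ for all $f, g \in \clone{C}$. Since $f \interdash g$ means precisely that $f$ and $g$ have the same truth conditions $f^{-1}\{\True,\Both\} = g^{-1}\{\True,\Both\}$, the question becomes: when can $\clone{C}$ contain two \emph{distinct} functions with identical truth conditions? I first record that, by Theorem~\ref{harmonious clones}(i), the clone $\clonegen{\DMAClone, \dual} = \clonegen{\DLatClone, \dmneg, \dual}$ is exactly the clone of harmonious functions, so ``lying below $\clonegen{\DMAClone, \dual}$'' and ``containing only harmonious functions'' are the same condition.

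The right-to-left implication is the easy half. Suppose every function in $\clone{C}$ is harmonious and take $f, g \in \clone{C}$ with $f \interdash g$. For a harmonious function the falsity conditions are determined by the truth conditions: $f(\tuple{a})$ is false iff $f(\dual \tuple{a})$ is non-true, because $f(\dual \tuple{a}) = \dual f(\tuple{a})$ and $\dual$ carries $\{\False, \Both\}$ onto the non-true values. Hence $f$ and $g$ have the same falsity conditions as well, and the Truth and Falsity Lemma (Lemma~\ref{truth and falsity lemma}) gives $f = g$; equivalently this is the uniqueness clause of the Harmonization Lemma (Lemma~\ref{harmonization lemma}). By the Fact, $\clone{C}$ is selfextensional.

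For the converse I argue contrapositively. If $\clone{C}$ is not below $\clonegen{\DMAClone, \dual}$ then it contains a non-harmonious function, so by Theorem~\ref{non-harmonious clones} it contains $\mnh_1$ or $\mnh_2$. Non-selfextensionality propagates upward: if two distinct functions with equal truth conditions lie in a clone above $\DMAClone$, the Fact makes every larger such clone non-selfextensional. It therefore suffices to exhibit, inside each of $\clonegen{\DMAClone, \mnh_1}$ and $\clonegen{\DMAClone, \mnh_2}$, two distinct functions with the same truth conditions. For $\mnh_1$ the witness is the pair consisting of $\mnh_1$ and the function $\mnh_1'$ agreeing with $\mnh_1$ everywhere except that $\mnh_1'(\Both,\Neither) = \False$ in place of $\mnh_1(\Both,\Neither) = \Neither$. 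Both values are non-true, so $\mnh_1$ and $\mnh_1'$ have the same truth conditions, whence $\mnh_1 \interdash \mnh_1'$ while $\mnh_1 \neq \mnh_1'$. (A forced-value analysis, plugging the constants $\True$ and $\False$ into each argument and using persistence together with preservation of $\Btwo$, $\Kthree$, $\Pthree$, in fact shows that $\mnh_1$ and $\mnh_1'$ are the \emph{only} two binary functions in the clone with these truth conditions.) The function $\mnh_2$ is handled by the analogous perturbation $\mnh_2'$, which changes $\mnh_2(\Both,\Neither) = \False$ to $\Neither$.

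The one genuinely load-bearing step is to verify that $\mnh_1'$ (and its $\mnh_2$-counterpart) really belongs to the relevant clone; this is the main obstacle, since no unary correction is available, the unary part of $\clonegen{\DMAClone, \mnh_1}$ being just the six harmonious unary terms of $\DMfouralg$. Membership is nevertheless a finite matter, settled by the Baker--Pixley criterion already invoked in the paper: $\mnh_1' \in \clonegen{\DMAClone, \mnh_1}$ iff every subalgebra $S \leq \DMfouralg \times \DMfouralg$ preserved by $\mnh_1$ is preserved by $\mnh_1'$. Because $\mnh_1'$ differs from $\mnh_1$ only at the argument $(\Both,\Neither)$, compatibility can fail only on an application whose relevant column equals $(\Both,\Neither)$; and there one checks that the element $\mnh_1'$ would output is already forced into $S$ by closure under $\wedge$ and $\dmneg$ (the key identity being $\Both \wedge \Neither = \False$), so no such $S$ exists. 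This can be confirmed mechanically in exactly the manner of the proofs of Theorems~\ref{non-persistent clones} and~\ref{covers of dma}.
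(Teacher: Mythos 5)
Your proof has the same skeleton as the paper's: the right-to-left half (harmonicity plus identical truth conditions forces identical falsity conditions, then the Truth and Falsity Lemma) is exactly the paper's argument, and for the converse both you and the paper argue contrapositively via Theorem~\ref{non-harmonious clones} and then exhibit, alongside $\mnh_{1}$ (resp.\ $\mnh_{2}$), a second function with the same truth conditions. The only real difference is how you certify that the second function lies in the clone, and there your route is needlessly heavy; moreover, as written it is the one place where you only sketch, since the Baker--Pixley appeal plus ``one checks that the output is forced into $S$'' is a claim deferred to a computer rather than a proof. In fact no such verification is needed: your perturbed function $\mnh_{1}'$ is harmonious and persistent (check the tables, or simply observe that $\mnh_{1}'(x,y) = y \wedge \dmneg y \wedge (x \vee \dmneg x)$), hence it lies in $\DMAClone$ itself by Theorem~\ref{harmonious clones}; likewise your $\mnh_{2}'$ is just $y \wedge \dmneg y$. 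These are precisely the witnesses the paper writes down, and putting the witnesses inside $\DMAClone$ buys distinctness for free --- they are harmonious while $\mnh_{1}$ and $\mnh_{2}$ are not --- which is why the paper phrases the step as ``it suffices to find $f$ and $g$ in $\DMAClone$.'' So your argument is correct in substance and essentially the paper's, but the load-bearing membership step should be replaced by the observation that the perturbed functions are themselves De~Morgan term functions, which makes the computer-assisted detour (and the unproven side claim that $\mnh_{1}, \mnh_{1}'$ are the only functions in the clone with those truth conditions) unnecessary.
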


\begin{proof}
  Suppose that a clone lies between $\DMAClone$ and $\clonegen{\DMAClone, \dual}$. Then this clone is harmonious by Theorem~\ref{positive clone and persistent clone}. We show that it is selfextensional. If $f$ and $g$ are harmonious functions and $f \interdash g$, then $f$ and $g$ have the same truth conditions, therefore by harmonicity also the same non-falsity conditions. But then $f = g$ by the Truth and Falsity Lemma (Lemma~\ref{truth and falsity lemma}).

  Conversely, suppose that a clone above $\DMAClone$ is not below $\clonegen{\DMAClone, \dual}$. Then it contains either $\mnh_{1}$ or $\mnh_{2}$ by Theorem~\ref{non-harmonious clones}. (Recall Figure~\ref{fig: non-harmonious binary functions}.) It suffices to find functions $f$ and $g$ in the given clone such that $f \interdash \mnh_{1}$ but $f \neq \mnh_{1}$, and similarly for $g$ and $\mnh_{2}$. In particular, it suffices to find a function $f$ and $g$ in $\DMAClone$ such that $f \interdash \mnh_{1}$ and $g \interdash \mnh_{2}$, since $f$ and $g$ are then harmonious while $\mnh_{1}$ and $\mnh_{2}$ are not. For this purpose it suffices to take
\begin{align*}
  f(x, y) & \assign y \wedge \dmneg y \wedge (x \vee \dmneg x), & g(x, y) & \assign y \wedge \dmneg y. \qedhere
\end{align*}
\end{proof}

\begin{corollary}[Protoalgebraic selfextensional clones above $\DMAClone$]
  The only protoalgebraic selfextensional clones above $\DMAClone$ are the clones $\clonegen{\DMAClone, \Box}$ and $\clonegen{\DMAClone, \dual}$. These are equivalential but not algebraizable.
\end{corollary}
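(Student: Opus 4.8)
The plan is to fuse the characterization of protoalgebraicity (Theorem~\ref{protoalgebraic clones}) with that of selfextensionality (Theorem~\ref{selfextensional clones}) and observe that together they trap the clone inside the interval $[\clonegen{\DMAClone, \Box}, \clonegen{\DMAClone, \dual}]$. The single piece of computation driving everything is a harmonicity audit of the functions named in those two theorems. A direct check from the tables shows that $\Box x$ is harmonious (it takes values in $\Btwo$, which $\dual$ fixes, and $\Box \dual a = \True$ iff $a = \True$, so $\dual \Box a = \Box \dual a$), while $x \imptmin y$, $x \impimin y$, and $\Delta x$ are all non-harmonious: for instance $\dual(\Neither \imptmin \Neither) = \dual \Both = \Neither$ but $(\dual \Neither) \imptmin (\dual \Neither) = \Both$; $\dual(\True \impimin \False) = \dual \Neither = \Both$ but $(\dual \True) \impimin (\dual \False) = \Neither$; and $\Delta \dual \Neither = \Delta \Both = \True \neq \False = \dual \Delta \Neither$.

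First I would run the forward direction. Let $\clone{C}$ be a protoalgebraic selfextensional clone above $\DMAClone$. Selfextensionality gives $\clone{C} \subseteq \clonegen{\DMAClone, \dual}$ by Theorem~\ref{selfextensional clones}, so every function in $\clone{C}$ is harmonious (recall $\clonegen{\DMAClone, \dual}$ is exactly the clone of harmonious functions by Theorem~\ref{harmonious clones}). Protoalgebraicity gives, by Theorem~\ref{protoalgebraic clones}, that $\clone{C}$ contains one of $\Box x$, $x \imptmin y$, $x \impimin y$; since the latter two are non-harmonious they cannot belong to $\clone{C}$, so $\clone{C}$ must contain $\Box x$. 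Hence $\clonegen{\DMAClone, \Box} \subseteq \clone{C} \subseteq \clonegen{\DMAClone, \dual}$.

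Next I would show this interval has only its two endpoints. Because $\clone{C}$ contains $\Box$, it is one of the finitely many clones enumerated in Theorem~\ref{discriminator clones}, and inspecting Figure~\ref{fig: discriminator clones} the only clones lying below $\clonegen{\DMAClone, \dual}$ are $\clonegen{\DMAClone, \Box}$ and $\clonegen{\DMAClone, \dual}$ itself. (Alternatively one can argue without the figure: $\clonegen{\DMAClone, \Box}$ is the clone of all harmonious functions preserving $\Btwo$, $\Kthree$, $\Pthree$ by Theorem~\ref{harmonious clones}, so if $\clone{C}$ properly extended it then, being harmonious and automatically $\Btwo$-preserving, $\clone{C}$ would contain a harmonious function failing to preserve $\Kthree$ or $\Pthree$; by Theorem~\ref{clones not preserving subalgebras} this forces $\dual \in \clone{C}$, since the remaining witnesses $\Neither$, $\Both$, $\idntob$, $\idbton$ are all non-harmonious.) Conversely both endpoints are protoalgebraic (each contains $\Box$) and selfextensional (each lies below $\clonegen{\DMAClone, \dual}$), so they are precisely the protoalgebraic selfextensional clones above $\DMAClone$.

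Finally I would dispatch the two trailing assertions. Both clones are equivalential by the corollary equating equivalentiality with protoalgebraicity above $\DMAClone$. Neither is algebraizable: by the corollary characterizing algebraizability, a clone above $\DMAClone$ is algebraizable iff it contains $\Delta x$, $x \imptmin y$, or $x \impimin y$, but all three are non-harmonious and therefore lie outside $\clonegen{\DMAClone, \dual}$, which contains $\clone{C}$. The only genuine labor is the routine harmonicity bookkeeping of the first paragraph; once those values are tabulated, every remaining step is just an application of an already-proved result, so I do not anticipate any real obstacle.
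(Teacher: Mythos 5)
Your proposal is correct and follows essentially the same route the paper intends for this corollary (which it leaves implicit): combine Theorem~\ref{selfextensional clones} (selfextensional iff contained in $\clonegen{\DMAClone, \dual}$, the harmonious clone) with Theorem~\ref{protoalgebraic clones}, note that $\imptmin$ and $\impimin$ are non-harmonious so protoalgebraicity forces $\Box$, then read off from Theorem~\ref{discriminator clones} that the interval $[\clonegen{\DMAClone, \Box}, \clonegen{\DMAClone, \dual}]$ contains only its endpoints, and settle the equivalential/non-algebraizable claims from the two corresponding corollaries. Your harmonicity computations and the alternative figure-free argument via Theorems~\ref{harmonious clones} and~\ref{clones not preserving subalgebras} are both accurate.
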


  These in a sense correspond to the two best behaved expansions of Belnap–Dunn logic. Both of these expansions in fact already been studied. The expansion by the operator $\Box$ was studied by Font \& Rius~\cite{font+rius00}, while the expansion by Boolean negation was studied by De \& Omori~\cite{de+omori15}. (Recall that conflation is interdefinable with Boolean negation over $\DMAClone$.)

  Finally, let us observe for the sake of completeness that none of the three covers of $\DMAClone$ (recall Theorem~\ref{covers of dma}) are protoalgebraic or truth-equational.

\subsection*{Acknowledgement} The author is grateful to the anonymous referee for their careful reading of the manuscript and useful comments, which helped to improve the paper.

\end{document}